\definecolor{blue2}{rgb}{0,0.45,0.74}%de fine a new color for deep blue to comment
\newtheorem{lemma}{Lemma}
\newtheorem{corollary}[lemma]{Corollary}
\newtheorem{proposition}[lemma]{Proposition}
\newtheorem{theorem}[lemma]{Theorem}
\newtheorem{remark}[lemma]{Remark}
\newcommand\TR{\mathrm{Tr}}
\newcommand\be{\begin{equation}}
\newcommand\ee{\end{equation}}
\newcommand{\tdens}{\tau}
\newcommand{\edens}{\varepsilon}
\newcommand{\ER}{Erd\H{o}s-R\'enyi}
\newcommand{\G}{\mathbb{G}}
\newcommand{\B}{\mathbb{B}}
\newcommand{\R}{\mathbb{R}}
\newcommand{\tet}{\textit}
\newcommand{\Value}{worth}
\newcommand{\Values}{worths}
\newcommand{\regular}{\mathcal{W}}
\newcommand{\reduced}{\widetilde {\mathcal{W}}}
\title{Emergence in Graphs with near-extreme constraints}
\author{Charles Radin and Lorenzo Sadun}
\address{Department of Mathematics, University of Texas at Austin}
\email{radin@math.utexas.edu, sadun@math.utexas.edu}
\begin{document}

\begin{abstract} 
We consider entropy-optimal graphons associated with extreme and near-extreme constraints on the densities of edges and triangles. We prove that the optimizers for near-extreme constraints are unique and multipodal and are perturbations of the previously known unique optimzers for extreme constraints. This proves the existence of infinitely many phases. We determine the podal structures in these phases and prove the existence of phase transitions between them.
 \end{abstract} 
\maketitle

\section{Setting and results}

In this paper we determine the structure of typical
large graphs with a specified density $e$ of edges and 
a near-extreme density $t$ of triangles. It is known that
the structure of all but an exponentially small 
fraction of such graphs are described by a graphon that
maximizes a certain entropy functional subject to 
certain integral constraints, assuming that the entropy
maximizer is unique up to equivalence. (See Section \ref{Definitions} for relevant background.) We show that
this graphon is indeed unique and takes a simple multipodal form, with parameters that are piecewise analytic functions of 
$(e,t)$. We thereby prove the existence of infinitely many phases and infinitely many phase 
transitions. These results are stated in Theorems \ref{thm:flat}, 
\ref{thm:scallop}, \ref{thm:orderparameter} and \ref{thm:top}.

Our solutions to the entropy maximization problem rely
on the theory of Lagrange multipliers and on a new
quantity, called ``\Value'', that applies to 
columns of graphons. In Section \ref{sec:Lagrange} we 
develop the theory of Lagrange multipliers for 
graphons, define \Value, and prove in Theorem 
\ref{thm:worth} that all of the columns of 
an entropy-maximizing graphon must maximize \Value.

\begin{comment}
\subsection{Brief outline of this introductory section}

We investigate the large scale structures called {\em podes} 
which emerge as node number increases in simple graphs with given edge and triangle densities $(e,t)$. 
In particular we focus on {phase transitions} which appear as discontinuities in the pode 
structure as $(e,t)$ is varied. After a brief background in Section \ref{Background}, in Section \ref{Connections} we briefly connect the underlying mathematics with themes in optimization.
Our proofs use the mathematics of graphons, which we briefly introduce and connect with the functional analysis of optimization in Section \ref{Definitions}, and then use to give a more detailed description of our results in Section \ref{detailed results}.
\end{comment}

\begin{comment}
We use the formalism of limit graphs or graphons of Lov\'{a}sz et al 
to model graphs at an infinite-node level, and the Large Deviation Principle (LDP) for $\G(n,p)$
of Chatterjee and Varadhan to quantify the passage from the low level of small, finite graphs to the infinite level of graphons. 
\end{comment}

\subsection{\label{Background} {The background}} 
The boundary curves in Figure \ref{fig:phase_space} show the extreme accessible values of pairs $(\edens,\tdens)$ of densities of edge and triangle subgraphs in asymptotically large simple graphs \cite{Raz}. Values throughout the interior are also accessible, and can be studied, using the formalism of graph limits or graphons developed in 2006-2011 by Lov\'{a}sz et al \cite{BCL,BCLSV,LS1,LS2,LS3} (see \cite{Au, DJ} and \cite{Lov} for background), and the Large Deviation Principle (LDP) of $\G(n,p)$ graphs \cite{CV}.

\begin{comment}Completing this figure was a difficult optimization problem, completed by Pikhurko and Razborov in 2012 
\cite{PR}.
\end{comment}
%Using the formalism of graphons, and the LDP,
\begin{comment} and a Boltzmann entropy \cite{RS1},
\end{comment}
\begin{comment}(all discussed in the next subsection), 
\end{comment}

\begin{comment} one can associate with each extreme-density pair $(\edens,\tdens)$  a \textit{unique optimal graphon}, i.e. a graphon $g$ with the given densities on the boundary, which optimizes the LDP rate function $-S(g)$. 
The development of the graph-limit or graphon formalism in 2006-2011 by Lov\'{a}sz et al \cite{BCL,BCLSV,LS1,LS2,LS3} 
%followed the work of Aldous \cite{Al}, Hoover \cite{Ho} and Kellenberg \cite{Kel}; 
See \cite{Au, DJ} and \cite{Lov} for background. 
\end{comment}

\begin{figure}[ht]
\includegraphics[width=4in]{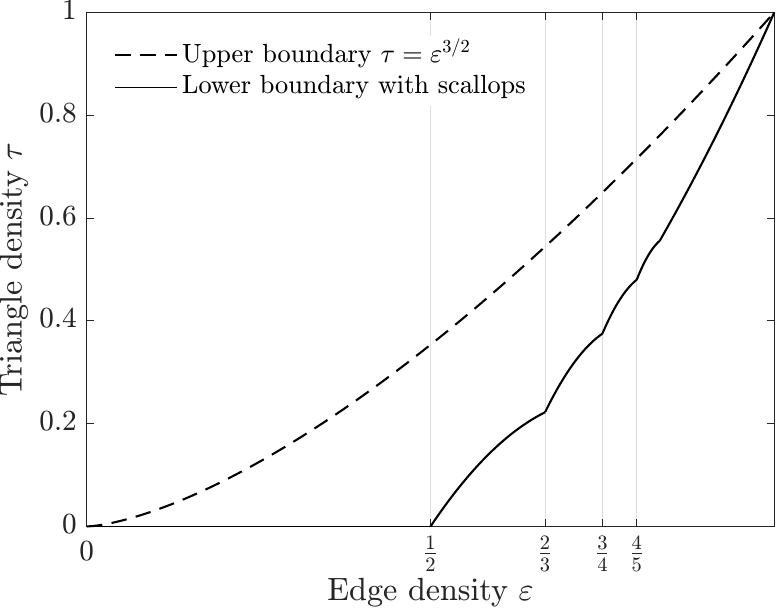}
\caption{The Razborov triangle, made from curves displaying  the extreme values of pairs of accessible edge and triangle densities. The curvature of the ``scallops'' on the lower 
right is exaggerated for visibility.}
\label{fig:phase_space}
\end{figure}

The  structure of the graphs on the
boundary of Figure \ref{fig:phase_space} appeared in 2012  in a preprint of \cite{PR}.
Following as it did soon after the development of graphons and the LDP for $\G(n,p)$, this led to a series of papers aimed at analyzing emergent (large scale) structures when the edge/triangle constraints $(\edens,\tdens)$  were near the ``\ER{}'' curve $\tdens=\edens^3$, associated with graphs in $\G(n,p)$. 
These papers used the LDP to introduce a {\em Boltzmann entropy}, $\B(e, t)$, which measures the exponential rate of growth of the
number of large graphs with edge and triangle densities $(\edens, \tdens) \approx (e, t)$, a notion copied from statistical mechanics. See \cite{RS2} for a precise statement and proof of this theorem, and the important fact that $\B(e,t)$ is not convex in $(e,t)$.

Computer simulations led to the conjecture \cite{KRRS1} of many distinct emergent ``phases'' throughout the interior of Figure \ref{fig:phase_space}, as seen in Figure \ref{fig:conjecture}. Eventually emergent phases were determined for parts of the three regions $F(1,1)$ \cite{KRRS2}, $B(1,1)$ \cite{NRS3} and $A(2,0)$ \cite{NRS4} in Figure \ref{fig:conjecture} near the \ER{} curve.

\begin{figure}[ht]
\includegraphics[width=4in]{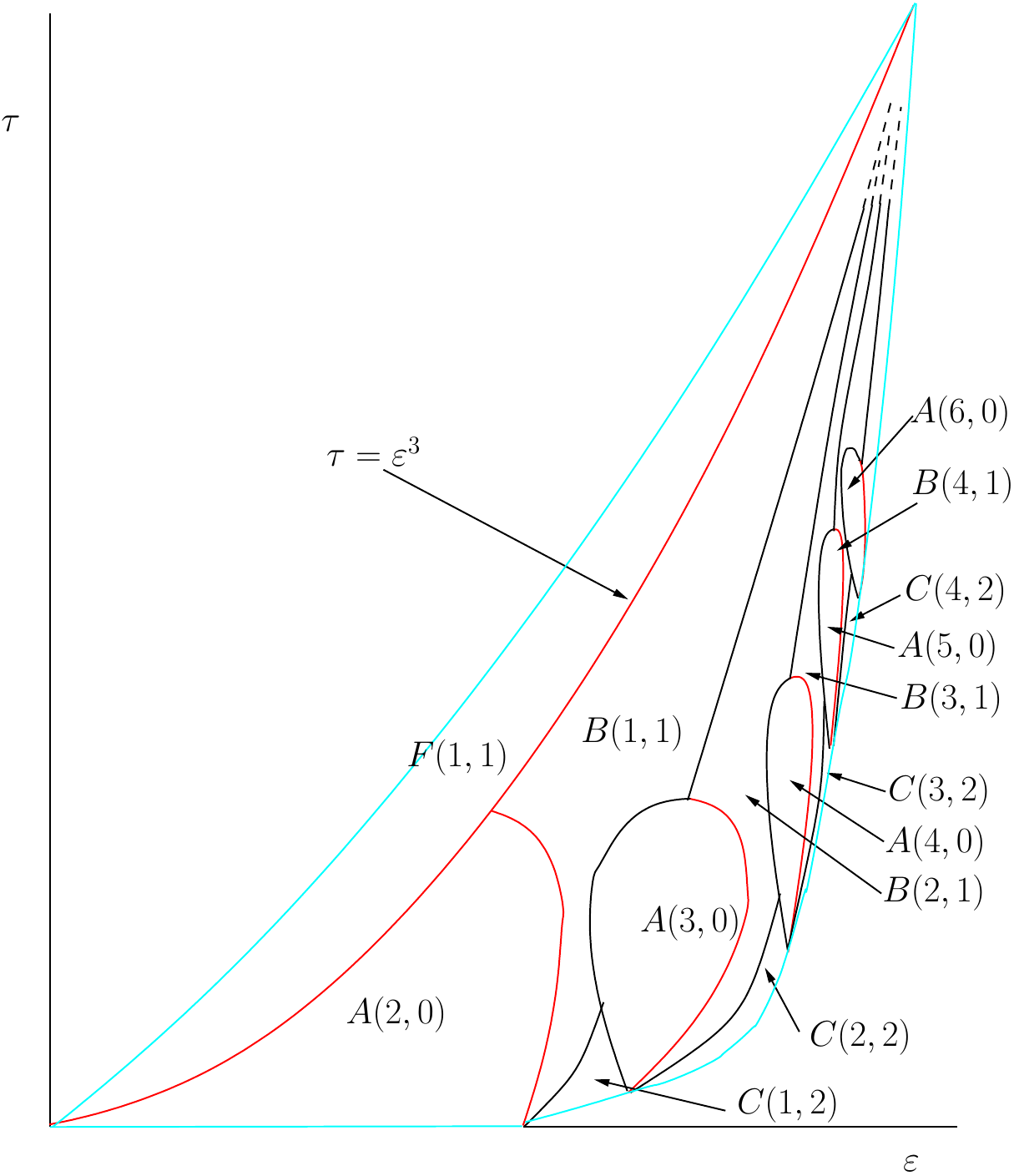}
\caption{Schematic drawing of a conjecture from 2017 \cite{KRRS1}, 
based on computer simulations of entropy-optimal graphons associated with the phases of large 
graphs with edge and triangle constraints.}
\label{fig:conjecture}
\end{figure} 
{
\subsection {\label{Connections} {Connections with other research}}

There is widespread interest in analyzing network and graph data \cite{New}, and statistical methods have developed to support this, including a family of Exponential Random Graph Models (ERGMs). In \cite{CD} a number of practical problems with ERGMs were isolated and treated using the large deviations principal of \ER{} graphs. In \cite{RS1} a Boltzmann entropy $\B(\bar \tau)$ was introduced which, together with the LDP, allowed the analysis of `exponentially most' large finite graphs with constraints on the densities $\bar \tau$ of some subgraphs; see \cite{RS2} for a review of these results in a general setting. Using $\B(\bar \tau)$ some of the weaknesses of ERGMs discussed in \cite{CD} can be quantified. This is discussed in detail in Section \ref{sec:No-ERGM}.

Our paper is concerned with the emergence \cite{Lau} of large scale structure in graphs, for instance of podes. By far the most highly developed mathematical formalism concerned with emergence is equilibrium statistical mechanics. 
In our work we have sought to create an analogous formalism for simple
constrained graphs, motivated by the Razborov triangle, Figure \ref{fig:phase_space}. 

One of our goals is to better understand constrained graphs, to extend the classic study of extremal graphs, of which \cite{PR} is a distinguished result \cite{AlK, Bollobas-Book04}. 
Another is to provide an example for other optimization problems facing similar obstacles. For this reason we give here a brief sketch of emergence in statistical mechanics \cite{Ru, Yeo, Tou}, emphasizing the significance of {\em entropy, free energies} and especially {\em convexity}.

A common structure underlying the mathematics of edge-triangle graphs and the mathematics of statistical mechanics is constrained optimization.
There are several ways to view equilibrium statistical mechanics as constrained
optimization on a space of many-particle configurations \cite{Isr,El,Ru2,Lan}, all of which involve the global optimization of any of a range of free energy functionals, or the entropy. The entropy 
in statistical mechanics is a measure of the number of possible 
particle configurations with given constraints. It is a fundamental quantity. It is no exageration to view statistical mechanics as built on the {\em convexity} of this 
entropy (see the lectures of Lanford in \cite{Lan} and the introduction by Wightman in \cite{Isr}).

The convexity of the entropy allows one to analyze the system without loss of information by the use of a variety of free energies \cite{Tou}, such as
the Gibbs free energy $G(p,T)$, which is more familiar than the entropy. 

\begin{figure}[ht]
\includegraphics[width=4in]{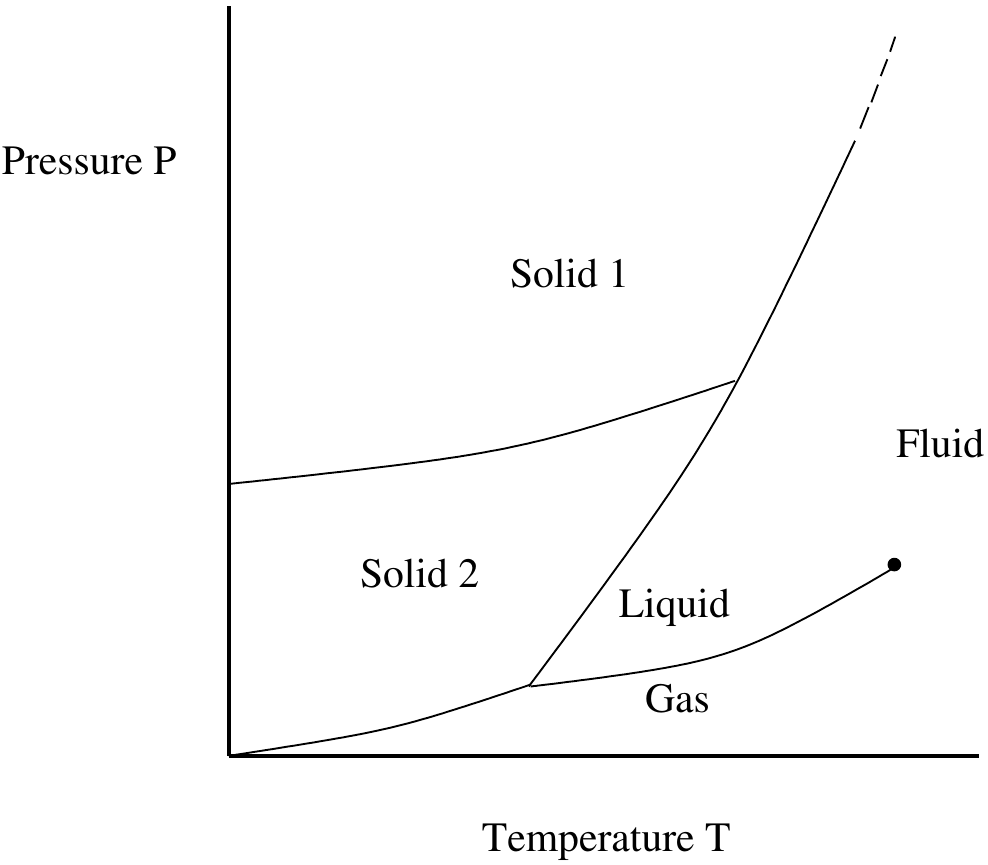}
\caption{This is a crude sketch of the phases of bulk matter, separated by transition curves. 
There are more than 20 known different solid phases of water, different crystalline structures.}
\label{fig:thermo}
\end{figure}

Figure \ref{fig:thermo} shows a primitive thermodynamic phase diagram, illustrating the pattern of solid and fluid phases in a physical bulk material (the phases represent the large particle-number scale in the emergent picture), as functions of constraint parameters pressure $p$ and temperature $T$ \cite{Pi}.
The experimentally measurable Gibbs free energy $G(p,T)$ \cite{Yeo, Sal}
is found to vary smoothly within each region, but shows 
singular behavior when constraints cross some lower dimension curves (see \cite{Ru2} and section VI in \cite{Isr}), where bulk material properties such as mass density and heat capacity can change abruptly. 

In the analogy between constrained graphs and 
statistical mechanics, discussed for instance in \cite{PN2}, \cite{St} and \cite{CD}, a large simple graph $G$ contains many edges, which play the role of individual particles, and the number of copies of some subgraph $H$ in $G$, such as 
triangles, plays the role of the total (potential) energy of $G$. A key to 
understanding emergent phases in both statistical mechanics and constrained graphs is defining an {\em order parameter} 
\cite{An}, which is a function that is identically zero in one phase but
nonzero in another. We  show its use in Section \ref{OrderParam}.

Unfortunately, with graphs there is no analog of equivalent free energies. (This is analyzed in Section \ref{sec:No-ERGM}.)
%This was the starting point of our project in 2013 \cite{RS1}. (The ``inequivalence of ensembles'' in constrained graphs was rediscovered and analyzed later; see \cite{DMRS}.)
This `inequivalence of ensembles' presents a serious obstacle. In statistical mechanics free energies provide considerable technical advantages. In their absence we had to develop replacement tools.
One such tool, which we introduce in this paper, is the
``\Value'' $W(C)$ that we associate to each column $C$
of a graphon. 

\begin{comment}
 Developing our variational arguments at the level of columns of graphons
%, rather than simply at points, 
is a significant advance in our
understanding of optimal graphons and therefore phase transitions in constrained graphs. 
\end{comment}

We note that some important optimization problems also suffer from 
nonconvexity complications. For instance, the form of optimal transport 
theory developed by Kantorovich and Brenier was based on convex analysis.
It required significant developments over several years to allow the original {\em nonconvex} Monge 
problem to access that convex analysis. (For an introduction to optimal transport see the preface in 
\cite{San} or chapter 1 in \cite{Mag}).

The emergence of structure in large but finite physical materials is
dramatic, {\em particularly the diversity of solid phases}, such as the graphite and diamond phases of carbon. (There are more than 20 distinct solid phases of water.) All the richness displayed by the phases of all materials, not 
just the pure elements but also the huge number of compounds 
like water and alcohol, is created by the electromagnetic interaction within 
and between molecules. All of inorganic 
chemistry comes down to the 100 or so different (integer) electric 
charges of atomic nuclei. 

Emergence gives rise to the complicated 
structure of ``water'' in Figure \ref{fig:thermo} and in the edge-triangle model 
in Figure \ref{fig:conjecture}. The study of the emergence of such diversity from the interaction of 
invisibly small components of a small number of types has led to a great 
deal of interesting mathematics in the past twenty years.
It is this promising history which was the motivation to bring the richness of emergent structure in
statistical mechanics, which is built on the {\em convexity} of its 
entropy,
\begin{comment}(see \cite{Isr}, in particular the introduction by Wightman, and 
\cite{Lan})
\end{comment}
into the \tet{nonconvex} setting of the Boltzmann entropy $\B$ of constrained graphs. 

\bigskip
\subsection {\label{Definitions} {Definitions}}

We include here the definitions and concepts
needed to state our results precisely and to prepare the
reader to follow the proofs. 

%Our proofs are based on the  graphon formalism, the Large Deviation Principle (LDP) for $\G(n,p)$ graphs, and certain
%results in extremal graph theory \cite{PR}. We base our definitions and other relationships from the graphon formalism, and the LDP, on those in the book by Chatterjee \cite{Ch}.

\subsubsection{Graphons and the cut topology}
\phantom{asdf}

Intuitively, graphons are limits, as node number $n\to 
\infty$, of the adjacency matrices of simple graphs, 
with the nodes mapped to the interval $[0,1]$. For 
background up to its publication in 2011 and an 
encylopedic treatment of various aspects of graphons we 
recommend the book \cite{Lov} by Lov\'{a}sz. For a 
more recent treatment, see \cite{Ch}.

\noindent \textbf{Definition of graphons.} The space $\regular$ of graphons is the quotient, of the space of Borel measurable functions
$g: [0,1]^2 \to [0,1]$ satisfying $g(x,y)=g(y,x)$, by the identification of functions that differ only on a set of Lebesgue measure zero.

The cut topology on $\regular$ is metrizable with the following {metric}.

\noindent \textbf{Definition of the cut metric.} 
\be d_{cut}(g_1, g_2) \equiv \sup_{S,T}\Big|\int_{S \times T} 
\big [ g_1(x,y) - g_2(x,y) \big ] \,dx\, dy\Big|, 
\ee
where $S,T$ are measurable subsets 
of $[0,1]$.
%\footnote{Chatterjee \cite{Ch} uses a 
%slightly different metric to define the same topology. 
%Chatterjee's metric, which is bounded above and below 
%by multiples of $d_{cut}$, is 
%\[ d_{cut}^{Ch}(g_1, g_2) = \sup_{a,b}\Big|\int_{[0,1]^2} 
%a(x)b(y)\big [ g_1(x,y) - g_2(x,y) \big ] \,dx\, dy\Big|,
%\]
%where $a$ and $b$ are Borel measurable maps from %$[0,1]$ to $[-1,1]$.}
%
%
%
%
%we will need two metrics, $d_1\, d_2$, on $\regular$ %which generate that topology which we prove in the %Appendix are equivalent. They are defined as follows.
%\be d_1(g,f) \equiv 
%\vskip.01tr{cut}ein \noindent and
%{\color{blue} We absolutely do not need two metrics
%to define the topology. One is plenty. I'm OK with 
%mentioning $d_{cut}^{Ch}$ for Chatterjee fans, but OUR
%analysis only uses $d_{cut}$.}

Note that the absolute value goes outside the
integral!

\subsubsection{Subgraph densities and weak equivalence}\phantom{asdf}

For a simple graph  $F=(V,E)$ on $k$ nodes, we
define the (homomorphism) density of $F$ in the graphon $g$ as
\be \label{eq:density} \tdens_F(g) = \int_{[0,1]^k} \prod_{{i,j}\in E} g(x_i,x_j) dx_1\,dx_2\,\ldots,dx_k.
\ee
It can be proven \cite{Ch} that $\tdens(F)$ is continuous on $\regular$ in its cut topology.
We will concentrate on the densities 
\be \edens(g) = \iint g(x,y) \, dx \, dy \ee
and
\be \tdens(g)  = \iiint g(x,y) g(y,z) g(z,x) \, dx \, dy \, dz\ee
of edges and triangles, respectively. 

The space $\regular$ of ordinary graphons is not
compact in  the cut topology. To obtain a compact space $\reduced$ we take the
quotient of $\regular$ by a {\bf weak equivalence}
relation associated with subgraph densities.

\noindent \textbf{Definitions of weak equivalence and reduced graphons.} 
Graphons $g_1$ and $g_2$ are {weakly equivalent} 
if $\tdens_F(g_1)=\tdens_F(g_2)$ for every simple subgraph $F$. Elements of the quotient space $\reduced$ are called reduced graphons.

For our purposes, it is useful to use a different description of weak equivalence. 
The group of measure-preserving transformations of 
$[0,1]$ acts naturally on $\regular$. If $g$ is a graphon and $\sigma$ is such a 
transformation, we define, in terms of any representative modulo measure zero:
\be g^\sigma(x,y) = g(\sigma(x), \sigma(y)). \ee
We say that $g$ and $g^\sigma$ are {\bf group equivalent}. Note that $\tdens_F(g^\sigma)= \tdens_F(g)$, 
thanks to a simple change-of-variables in the integral (\ref{eq:density}), so group equivalence implies weak equivalence. 

We next define a pseudometric $\delta_{cut}$ on $\regular$ that measures how far two graphons are from 
being group equivalent.

\noindent \textbf{Definition of $\delta_{cut}$.}
\be 
\delta_{cut}(g_1,g_2)=\inf_{\sigma_1,\sigma_2} d_{cut}(g_1^{\sigma_1},g_2^{\sigma_2}). 
\ee

It can be proven \cite{Ch} that $\delta_{cut}(g_1,g_2)=0$ if and only if $g_1$ and $g_2$ are weakly equivalent.  
The pseudometric $\delta_{cut}$ on $\regular$ then descends 
%{\color{blue} not lifts, since $\regular$ maps to $\reduced$} 
to a metric $\delta_{cut}$ on 
$\reduced$: $\delta_{cut}([g_1],[g_2]) = \delta_{cut}(g_1,g_2)$, where $[g]$ denotes the weak 
equivalence class of $g\in \regular$.

\subsubsection{Compactness, the LDP of $\G(n,p)$ graphs and the Shannon entropy}\phantom{asdf}

A major result in the graphon formalism is the

\smallskip

\noindent \textbf{Compactness theorem.}
%\cite[Theorem 9.23]{Lov} 
$\reduced$ is compact in the topology defined by the metric $\delta_{cut}$.

Our results rely heavily on the LDP of $\G(n,p)$ 
graphs, which is expressed in terms of graphons. 
For an elegant and concise reference for both the 
graphon formalism and the LDP we recommend the book \cite{Ch} of Chatterjee.\footnote{Note, however, that
Chatterjee works with a slightly different metric on
$\regular$, defining 
\[ d'_{cut}(g_1, g_2) = \sup_{a,b}\Big|\int_{[0,1]^2} 
a(x)b(y)\big [ g_1(x,y) - g_2(x,y) \big ] \,dx\, dy\Big|,\]
where $a$ and $b$ are Borel measurable maps from $[0,1]$ to $[-1,1]$. $d'_{cut}(g_1, g_2)$ is bounded 
above and below by multiples of $d_{cut}(g_1, g_2)$, 
so Chatterjee's topological results based on $d'_{cut}$ and the 
corresponding $\delta'_{cut}$ apply equally well to 
$d_{cut}$ and $\delta_{cut}$.}

\begin{comment}{\color{blue} I'm OK
with mentioning $d'$ and $\delta'$ in a footnote or 
even moving that content into a paragraph, but we 
do NOT need parallel development of two different 
metrics when we only use one. We also don't need 
an appendix devoted to the subject. A footnote is plenty. I'll write up the proof of equivalence separately for your eyes (not for the readers of this 
paper).}
\end{comment}

We next introduce some terms associated with the LDP.

The {\bf Shannon entropy} of a graphon $g$ is  
\be S(g) = \iint H \big (g(x,y)\big ) \, dx \, dy, \ee
where
\be \label{entropy function} H(u) = - [u \ln(u) + (1-u) \ln(1-u)]
\ee
is the usual entropy of independent coin flips with probability $u$ of getting heads.
Note that $H(u)$ is concave down and that $H'(u) = \ln \left (\frac{1-u}{u} \right )$ diverges as 
$u$ approaches 0 or 1. 
$S(g)$ is invariant under measure-preserving
transformations of $[0,1]$ and defines a function 
(also denoted $S$) on $\reduced$. $S$ is also minus the rate function of the LDP of $\G(n,p)$ graphs. The LDP relates the number of 
large graphs associated with an (open or closed)
subset of $\reduced$ with the supremum of $S$ on 
that subset; see \cite{Ch} for more details on the LDP.

Let $\regular_{e,t}$ be the set of graphons with 
$\edens(g)=e$ and $\tdens(g)=t$ and let $\reduced_{e,t}$ be the corresponding set of reduced
graphons. 

The {\bf Boltzmann entropy} function $\B(e,t)$ can be 
understood in two ways. One is as the exponential rate 
of growth, as the number of nodes $n$ diverges, of the 
number of graphs on $n$ nodes with edge/triangle 
densities $(e,t)$. For this paper, it is useful to use 
the fact, proven in \cite{RS1,RS3}, that $\B(e,t)$ equals
the maximum of $S([g])$ on $\reduced_{e,t}$, which is 
the same as the maximum of $S(g)$ on $\regular_{e,t}$. 

Most of this paper is devoted to maximizing
$S$ on $\regular_{e,t}$ or $\reduced_{e,t}$. That is, we focus on the {\em constrained optimization} of $S$.
The optimizing graphons are called {\bf entropy-optimal graphons} 
(or ``{\bf optimal graphons}'', for short) for the given constraints $(e,t)$. When we refer to an optimal graphon being
unique we always mean that the optimal {\em reduced} graphon is unique. 

The conjecture \cite{RS3, KRRS1} that constrained 
optimization of $S$ would give rise to the rich 
phenomena of Figure \ref{fig:conjecture}, much of which 
is finally proven in this paper, was by analogy of 
$\B(e,t)$ with the Boltzmann entropy in statistical 
mechanics, as discussed in subsection 
\ref{Connections}.

\subsubsection
{Multipodality, phases and phase transitions}\phantom{asdf}

A graphon is said to be {\bf k-podal} if we can partition
$[0,1]$ into $k$ measurable sets $I_1, \ldots, I_k$ such that $g(x,y)$ is constant
on each ``rectangle'' $I_i \times I_j$.  
We refer to the sets $I_i$ as {\bf podes}. 
If $g$ is $k$-podal for some integer $k$, we say that $g$ is {\bf multipodal}. We often use the words 
{\bf bipodal} and {\bf tripodal} to mean
2-podal and 3-podal. 
We say that an $(n+m)$-podal graphon has {\bf $(n,m)$ symmetry} if
$g$ is invariant under permutation of $n$ of the podes and is also invariant 
under permutation of the remaining $m$ podes. 
A graphon with $(2,0)$ symmetry is said to be {\bf symmetric bipodal}.

%It was proven in \cite{PR} that the boundary of the set 
%of achievable values of edge and triangle densities is 
%the ``Razborov triangle'', shown in Figure %\ref{fig:phase_space}. {\color{blue} Deleted because
%we already describe the Razborov triangle at the top of
%1.1}

A {\bf phase} is a connected open set in the interior of the Razborov triangle (Figure \ref{fig:phase_space}) in which the reduced optimal graphon
is unique and is a real analytic function of $(e,t)$ in
the following sense. 
Within a phase, and for each simple graph $F$, $\tdens_F$ of the
optimal graphon is required to be an analytic function of $(e,t)$.
In practice, the analyticity of all densities 
$\tdens_F$ is proven by first
showing that the optimal graphon is unique and 
multipodal of a certain form and then showing that
the finitely many parameters needed to describe this
multipodal graphon are analytic functions of $(e,t)$. 

A {\bf phase transition} occurs where some $\tdens_F$ is not
analytic or is not even defined, such as where the optimal graphon is not unique. 
Phase transitions have only been shown to occur on boundaries of phases, on curves.

\subsection{\label{detailed results}{Detailed results}} The following theorems give a simple description
of what happens near
almost all points along the boundary of the Razborov triangle, Figures \ref{fig:phase_space} and \ref{fig:conjecture}. We prove that the unique optimal graphons in the phases near the boundary are multipodal. The
cited theorems in later sections include additional estimates on 
how the parameters of the optimal graphons scale as the constraints approach the boundary. 

\begin{theorem}[Theorem \ref{thm:flat2}] \label{thm:flat}
For each fixed $e<1/2$ and all $t$ sufficiently small, the 
optimal (reduced) graphon in $\reduced_{e,t}$ is unique and is symmetric bipodal, with parameters that vary analytically with $(e,t)$.
%As $t \to 0$,
%the increase $\Delta B = \B(e,t)-\B(e,0)$ in the 
%Boltzmann entropy scales as $t \ln(1/t)$. 
\end{theorem}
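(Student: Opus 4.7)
The plan is to combine a candidate construction in the symmetric-bipodal family with a continuity/worth argument that excludes every other optimizer. I organize the argument in three steps followed by a summary of the main technical obstacle.

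Candidate family. Parametrize symmetric bipodal graphons by $(d,p)$, with two equal podes of size $1/2$, diagonal-block value $d$ and off-diagonal value $p$. A direct computation yields $\edens=(d+p)/2$ and $\tdens=d(d^2+3p^2)/4$. At $(d,p)=(0,2e)$ the Jacobian of $(d,p)\mapsto(\edens,\tdens)$ equals $\bigl(\begin{smallmatrix} 1/2 & 1/2 \\ 3e^2 & 0 \end{smallmatrix}\bigr)$, with determinant $-3e^2/2\ne 0$ for $e>0$. The implicit function theorem then produces, on a neighborhood of $(e,0)$, a unique real-analytic pair $(d(e,t),p(e,t))$ specifying a symmetric bipodal graphon $\hat g_{e,t}$; at $t=0$ one recovers $d=0,\,p=2e$, the symmetric bipartite graphon $g_0$.

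Uniqueness at $t=0$ and cut-metric limit. For $e<1/2$ I would first establish that $g_0$ is the unique $S$-maximizer among triangle-free graphons of edge density $e$; this is a boundary optimization that reduces via symmetrization of measure-preserving representatives and concavity of $H$ to a bipartite ansatz, and within that ansatz a one-dimensional calculation pins down equal pode sizes. Upper semicontinuity of $S$ in the cut metric, continuity of $\edens,\tdens$, and compactness of reduced graphons then guarantee that any global optimizer $g_{e,t}$ converges to $g_0$ in cut metric as $t\to 0^+$.

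Bipodal structure via worths. Here the tool from Section \ref{sec:Lagrange} enters: every column of $g_{e,t}$ achieves the common maximum of the worth functional $W_{g_{e,t}}(C)=\int H(C)-\alpha_t\int C-\beta_t\iint C(y)C(z)g_{e,t}(y,z)\,dy\,dz$. For $g_{e,t}$ close to $g_0$ I would show this functional has exactly two maximizers, lying close to the two column profiles of $g_0$, with strictly negative-definite Hessians. Every column of $g_{e,t}$ is then forced into one of two measurable classes, so $g_{e,t}$ is bipodal; the pode-swap symmetry of $g_0$ together with uniqueness of the optimum propagates to $(2,0)$-symmetry with equal-size podes. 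Matching the optimizer against the unique candidate of the first step gives $g_{e,t}=\hat g_{e,t}$ and the claimed analytic dependence.

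Main obstacle. The delicate part is the worth analysis in the third step. The triangle multiplier satisfies $\beta_t\to+\infty$ as $t\to 0^+$ because the constraint saturates at the Razbarov triangle boundary, and the quadratic form $\iint C(y)C(z)g_{e,t}(y,z)\,dy\,dz$ is indefinite, so naive concavity of $W$ in $C$ fails. The key is that, for $g_{e,t}$ close to $g_0$ in cut metric, the codegree operator is spectrally controlled by the bipartite structure and the blow-up of $\beta_t$ is matched by the collapse of $g_{e,t}$ toward the bipartite profile on the diagonal blocks; making this matching quantitative, so that concavity from $H$ dominates except in two small neighborhoods of the candidate column profiles, is the central technical work.
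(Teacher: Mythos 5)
Your outline follows the same broad strategy as the paper (uniqueness of the boundary optimizer, convergence of near-boundary optimizers to it, the \Value{} functional forcing every column into one of two profiles, then a finite-dimensional matching), and your Step 1 computation of $(\edens,\tdens)=\bigl(\tfrac{d+p}{2},\tfrac{d(d^2+3p^2)}{4}\bigr)$ with nonvanishing Jacobian is a clean way to package the final uniqueness/analyticity claim. But there is a genuine gap at the symmetry step. You assert that ``the pode-swap symmetry of $g_0$ together with uniqueness of the optimum propagates to $(2,0)$-symmetry with equal-size podes.'' This argument is vacuous: for \emph{any} bipodal graphon, relabeling the two podes is a measure-preserving transformation, so the swapped graphon is equivalent to the original one and uniqueness of the reduced optimizer imposes no constraint whatsoever on the pode sizes or on the two diagonal values; and there is no symmetry of the constrained problem that exchanges podes of unequal size. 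Consequently your argument cannot exclude an asymmetric bipodal optimizer (sizes $c\ne\tfrac12$, diagonal values $A\ne B$), and your final step collapses, since the implicit-function-theorem candidate lives only in the symmetric family and an asymmetric optimizer would simply fail to match it. The paper has to do real work here (the ``Symmetric bipodality'' subsection of Section \ref{sec:flat}): the entropy cost of $\Delta c=c-\tfrac12\ne 0$ is $-2\ln(1-2e)\Delta c^2$ at $t=0$, while the possible gain from asymmetry is only $O\bigl(t(\ln(1/t))^2\Delta c^2\bigr)$, forcing $c=\tfrac12$; then $A=B$ follows from the Euler--Lagrange equations and monotonicity of $H'$. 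Some quantitative substitute for this comparison is unavoidable.

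A second, smaller issue is that your Step 3 compresses most of the paper's technical content into the claim that $W$ has ``exactly two maximizers with strictly negative-definite Hessians.'' Cut-norm closeness of $g$ to $g_0$ does control the bilinear term in $W$, so showing that every \Value-maximizer is $L^2$-close to one of two profiles is feasible along your lines; but upgrading ``close to one of two profiles'' to ``exactly two maximizers'' is not a Hessian statement in any $L^2$ neighborhood, because with $\beta\to\infty$ the indefinite quadratic form defeats the uniform concavity of $H$ for perturbations concentrated on small sets, and the functional itself depends on the unknown optimizer $g$. The saving grace is the blow-up of $-H''$ at the exponentially small column values, which only becomes available after pointwise control of near-maximizers via the Euler--Lagrange equation (\ref{eq:EL1}); this is exactly what the paper supplies through its pointwise estimates and the fluctuation bootstrap ($\Delta_A,\Delta_B,\Delta_D$ each bounded by small multiples of the others, hence zero) that yields \emph{exact} bipodality. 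You flag this as the main obstacle, which is fair, but as written the proposal both underestimates that step and contains the invalid symmetry argument above, so it does not yet constitute a proof.
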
 

\begin{theorem}[Theorem \ref{thm:scallop2}] \label{thm:scallop}
Let $n \ge 1$ be an integer. For every $e \in \left ( \frac{n}{n+1}, \frac{n+1}{n+2}\right )$, with 
corresponding minimal triangle density $t_0$ (depending on $e$), 
and for all
$\Delta t$ sufficiently small, the optimal (reduced) graphon in $\reduced_{e, t_0 + \Delta t}$
is unique and $n+2$-podal, with $(n,2)$ symmetry and with parameters that vary analytically with $(e,t)$. 
\end{theorem}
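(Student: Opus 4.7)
The plan is to run the same template used in Theorem~\ref{thm:flat}, but with the symmetry class upgraded from $(2,0)$ to $(n,2)$. The boundary graphon at $\Delta t = 0$ is the Pikhurko--Razborov minimizer of $\tdens$ at edge density $e$: for $e \in (n/(n+1), (n+1)/(n+2))$ this is, up to equivalence, the unique $(n+2)$-podal graphon with $(n,2)$ symmetry --- $n$ ``large'' podes and $2$ ``small'' podes, with $g \equiv 0$ inside each pode and $g$ taking values in $\{0,1\}$ on all off-diagonal blocks except possibly the one between the two small podes. Call this the scallop graphon $g_e^*$. A first step is to verify that any sequence of optimizers $g_{e, t_0 + \Delta t}$ converges in cut metric to $g_e^*$ as $\Delta t \to 0^+$, and that the Lagrange multiplier $\beta$ conjugate to $\tdens$ diverges to $+\infty$. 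Compactness of reduced graphon space, continuity of $\edens$ and $\tdens$, and uniqueness of the Pikhurko--Razborov extremizer supply this.

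The second step invokes the column-\Value{} criterion from Section~\ref{sec:Lagrange}: every column of the optimizer has the same \Value{} $W^*$, and $W^*$ is maximal among admissible columns. For large $\beta$, the \Value{} functional $W(C)$ is exponentially sharply peaked. I would expand $W(C)$ around the two column types appearing in $g_e^*$ (one type for each of the two kinds of podes). The goal is to show that, to all orders in $1/\beta$, the only column profiles attaining $W^*$ are these two types, and within each type the entries take values of the form $\exp(-\Theta(\beta))$ or $1 - \exp(-\Theta(\beta))$ on the appropriate blocks. Because $W$ is invariant under permuting the $n$ large podes and under permuting the $2$ small ones, the optimizer is forced to be $(n+2)$-podal with $(n,2)$ symmetry.

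Once the symmetry class is pinned down, the problem reduces to an analytic optimization on the (at most) six-dimensional manifold of $(n,2)$-symmetric graphons, parametrized by the two pode sizes and the five rectangle values (with a normalization). The Lagrangian stationarity equations are real-analytic in these parameters and in $(e,t)$; at $(e, t_0(e))$ the scallop graphon is a unique solution. Computing the Jacobian and verifying nonsingularity, I would apply the analytic implicit function theorem to produce a unique real-analytic branch of critical points in a neighborhood of $(e, t_0(e))$, and a second-order variational check confirms this branch consists of constrained maxima.

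The main obstacle is the rigidity argument in the second step. One must rule out optimizers that break the $(n,2)$ symmetry, that have more than $n+2$ podes, or that take non-$\{0,1\}$ values on blocks other than the small-small one. Near $\beta = \infty$ the \Value{} functional at leading exponential order is degenerate along certain directions, so one must expand to sub-leading order to see the strict maximality of the scallop column types, and the book-keeping is delicate because all five rectangle types in an $(n,2)$-symmetric graphon contribute. The analogous but simpler rigidity argument behind Theorem~\ref{thm:flat} gives me confidence that the same approach works here, at the cost of substantially more calculation.
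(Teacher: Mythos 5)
Your overall template does match the paper's: convergence of optimizers to the Pikhurko--Razborov scallop graphon as $\Delta t\to 0^+$, a column-by-column analysis via the \Value{} functional with $\beta$ large, and a final reduction to the finite-dimensional family of $(n,2)$-symmetric graphons handled by analytic Euler--Lagrange equations and the implicit function theorem. But two of your steps have genuine gaps. First, the passage from ``every \Value-maximizing column is close to one of the boundary column profiles'' to ``the optimizer is exactly $(n+2)$-podal'' is missing: an expansion of $W(C)$ ``to all orders in $1/\beta$'' can never exclude the exponentially small (in $\beta$) fluctuations that are exactly the ones at stake here, since the off-small-small block values themselves differ from $0$ or $1$ by $\exp(-\Theta(\beta))$. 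The paper closes this gap with a separate rigidity argument that is not a \Value{} computation at all: using the pointwise Euler--Lagrange relation $H'(g(x,y))=\alpha+\beta G(x,y)$ and the mean value theorem, the oscillation $\Delta g_{ij}$ of $g$ on each rectangle $I_i\times I_j$ is bounded by $\frac{C\beta}{-H''(g_{ij})}$ times the oscillations on neighboring rectangles; chaining these bounds shows each oscillation is at most a constant less than one times itself, hence zero. Some analogue of this self-bounding step (or another mechanism forcing exact constancy) is indispensable and absent from your plan.

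Second, your claim that ``because $W$ is invariant under permuting the $n$ large podes and the $2$ small ones, the optimizer is forced to have $(n,2)$ symmetry'' is a non sequitur: invariance of a variational problem under a group does not make its maximizers invariant --- spontaneous symmetry breaking, the central theme of this model, is precisely the failure of that implication. The paper instead proves the symmetry by comparing the (equal) \Values{} $W_i$ of columns in different podes, which forces $|c_i-c_j|$ to be exponentially small, and then playing the Euler--Lagrange equations for the block values against the pode-size discrepancies so that each symmetry-breaking deviation is bounded by a tiny multiple of the others, hence all vanish. Two smaller points: cut-metric convergence of the optimizers to $g_e^*$ is not enough for the column estimates (the entropy is only semicontinuous in the cut metric; the paper works with $L^2$ convergence, which is what makes the approximate-pode decomposition and the estimate of $G(x,y)=\langle g_x|g_y\rangle$ work), and there are $n+2$ distinct boundary column profiles (in two symmetry classes), not two, which matters when classifying the possible \Value-maximizers and reassigning the exceptional set.
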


Note that these theorems do not make any claims about what happens exactly over
the cusps, i.e.~when 
$e = \frac{n}{n+1}$. When $e=1/2$ ($n=1$), the optimal
graphon has long been known to have a symmetric bipodal structure. 
When $n$ is larger, the optimal graphon is believed to have $(n+1,0)$ symmetry. 
However, a small neighborhood of each cusp is believed to intersect four(!) 
different phases, making a precise characterization difficult. 

\begin{theorem}[Theorem \ref{thm:orderparameter2}] \label{thm:orderparameter}
All of the phases above the scallops proven in Theorems \ref{thm:flat} and \ref{thm:scallop} have unique optimal reduced graphons with distinct symmetries and cannot be analytically continued to one another.
\end{theorem}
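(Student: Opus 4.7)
My plan is to combine the structure results of Theorems \ref{thm:flat} and \ref{thm:scallop} with the rigidity of symmetry under real-analytic continuation, invoking the order-parameter strategy sketched in Section \ref{OrderParam}. Theorem \ref{thm:flat} furnishes, in the $A(2,0)$ region, a unique optimal graphon that is symmetric bipodal: two equal-sized podes with at most two distinct values of $g$, invariant under the $S_2$-action swapping the two podes. Theorem \ref{thm:scallop} furnishes, above the $n$-th scallop, a unique optimal graphon with $(n,2)$ symmetry on $n+2$ podes, invariant under the $S_n\times S_2$ action permuting the first $n$ podes and separately permuting the remaining two. Since the pode orbit-size signature is $(n,2)$ for the $C(n,2)$ phase and $(2)$ for $A(2,0)$, the symmetry groups acting on $[0,1]$ are pairwise distinct across all the phases in question.

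Suppose, for contradiction, that two of these phases, with symmetry groups $G$ and $G'$, could be analytically continued into each other. Then there would exist an open connected region $U$ in $(e,t)$-space meeting both phases and a single real-analytic family of reduced graphons $g_{e,t}$ on $U$ that agrees with the unique optimum in each phase. The family is described throughout $U$ by finitely many pode-size and entry parameters that are real-analytic functions of $(e,t)$. The condition of $G$-invariance (respectively $G'$-invariance) reduces to a finite system of polynomial identities among these parameters, holding on the open subset of $U$ inside the first (respectively second) phase. By the identity theorem for real-analytic functions, each system must then hold on all of $U$, so the family would be simultaneously $G$- and $G'$-invariant throughout $U$.

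It remains to show that this simultaneous invariance is incompatible with the uniform lower bounds on pode sizes and on graphon-entry gaps that Theorems \ref{thm:flat} and \ref{thm:scallop} guarantee throughout each open phase. A graphon invariant under both $G$ and $G'$ is invariant under the subgroup they generate, whose orbit partition of $[0,1]$ is a strict common refinement of the two individual orbit partitions (since their orbit-size signatures differ). Such a refinement either forces a pode of positive measure to split — meaning that some original pode had measure zero — or forces two previously distinct values of $g$ to coincide, collapsing the combinatorial type. Either conclusion contradicts the quantitative estimates of Theorems \ref{thm:flat} and \ref{thm:scallop}, yielding the required contradiction. The main obstacle is precisely this last step: the combinatorial incompatibility must be verified for every unordered pair of phases $\{A(2,0), C(1,2), C(2,2),\ldots\}$, and the argument depends critically on having strict, uniform nondegeneracy estimates from the preceding theorems so that one cannot slip out along a path where a pode size or entry-gap tends to zero.
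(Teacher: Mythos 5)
There is a genuine gap, and it sits at the very first step of your contradiction argument rather than at the step you flag as the main obstacle. You assume that if two phases could be analytically continued into one another, then on a connected open region $U$ meeting both phases the unique optimizer would be ``described throughout $U$ by finitely many pode-size and entry parameters that are real-analytic functions of $(e,t)$.'' Nothing in the paper's notion of a phase gives you this: a phase only requires that the optimal graphon be unique and that every subgraph density $\kappa(g_{e,t})$ be analytic in $(e,t)$. In the hypothetical connecting region the optimizer need not be multipodal at all, and even if it were, the number of podes could vary, so there is no canonical finite-dimensional analytic parametrization on all of $U$ in which ``$G$-invariance'' and ``$G'$-invariance'' become polynomial identities to which you can apply the identity theorem. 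Relatedly, since the two symmetry groups act on different numbers of podes (2 versus $n+2$), it is not even clear what it means for a single graphon family to be invariant under both, short of embedding the bipodal optimizer as a degenerate $(n+2)$-podal one --- which reintroduces exactly the zero-size podes and coincident entries you later try to rule out. So the contradiction never gets off the ground in the form you state it.

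The paper's proof avoids this entirely by using an invariant that is guaranteed to be analytic in any phase by definition: it distinguishes the phases by the \emph{rank} of the optimal graphon (rank $2$ for the symmetric bipodal phase of Theorem \ref{thm:flat2}, rank $n+2$ above the $n$-th scallop), and detects rank through order parameters built from subgraph densities. Since $\TR(g^j)$ for $j\ge 3$ is the $j$-cycle density, Newton's identities applied to $g^3$ give a polynomial $p_k(g^3)$ in cycle densities that vanishes identically when $\mathrm{rank}(g)<k$ and is nonzero when $\mathrm{rank}(g)=k$ (the passage to $g^3$ is exactly to avoid the non-realizable traces $t_1,t_2$). Each such order parameter is an analytic function of $(e,t)$ on any phase, is identically zero on the lower-rank regions, and is nowhere zero on the higher-rank region; the identity theorem for real-analytic functions then shows no single phase can contain two of these regions. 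If you want to salvage a symmetry-based argument, you would first have to prove that analyticity of all subgraph densities forces an analytic multipodal parametrization across the connecting region --- a substantial claim that is not established in the paper and that your proposal takes for granted.
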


In the notation of Figure \ref{fig:conjecture}, Theorem \ref{thm:flat}
proves that the region just above the flat part of the bottom boundary
is part of the $A(2,0)$ phase.  
Theorem \ref{thm:scallop} 
proves the existence of all of the $C(n,2)$ phases, and Theorem 
\ref{thm:orderparameter} shows that these phases are all different. 

\begin{theorem}[Theorem \ref{thm:top2}] \label{thm:top}
For each fixed $e \in (0,1)$ and all $t$ sufficiently close to (but below) $e^{3/2}$, the 
optimal graphon with edge/triangle densities $(e,t)$ is unique and bipodal, with parameters that vary analytically with $(e,t)$. 
%Asymptotically, the Boltzmann entropy 
%scales as $-(e^{3/2}-t)\ln(e^{3/2}-t)$.
%and the Lagrange multiplier $\beta$ scales as $-\ln(e^{3/2}-t)$. 
\end{theorem}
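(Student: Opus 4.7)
The plan is to exploit the fact that the unique extremal graphon achieving $\tau(g)=e^{3/2}$ at fixed $\varepsilon(g)=e$ is the ``clique'' bipodal graphon $g_*$ that equals $1$ on $[0,\sqrt{e}\,]^2$ and $0$ elsewhere (this is the equality case of the Kruskal--Katona type bound realized by the top edge of the Razborov triangle). Near the top of the triangle the optimal graphon should be a small, smooth perturbation of $g_*$, and the column \Value{} tool will be used to show that every column of an optimizer takes one of only two possible shapes, forcing the bipodal structure.

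First, I would show a compactness/continuity step: for any sequence $t_n \uparrow e^{3/2}$ and any sequence of optimizers $g_n$ for $(e,t_n)$, the reduced graphons $g_n$ converge in the cut metric to $g_*$. This follows because $S(g_*)=0$, any accumulation point $g_\infty$ achieves $\tau(g_\infty)=e^{3/2}$ and $S(g_\infty)\le 0$ by lower semicontinuity of $-S$, and the extremal characterization of $g_*$ is unique up to equivalence. This also implies that in any representative the Lagrange multiplier $\beta_n$ for the triangle constraint must grow without bound as $t_n\uparrow e^{3/2}$, since the slope of $\B(e,\cdot)$ blows up at the boundary.

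Next, I would invoke the main variational result of Section \ref{sec:Lagrange}: every column $C_x(y)=g(x,y)$ of an optimizer maximizes the \Value{} functional $W(C)$, and this maximum is common across $x$. The Euler--Lagrange condition for a single column at the optimum reads, pointwise in $y$,
\begin{equation}
\ln\!\frac{C(y)}{1-C(y)} \;=\; \alpha + 2\beta \int g(y,z)\,C(z)\,dz,
\end{equation}
with Lagrange multipliers $\alpha,\beta$. Using the convergence $g\to g_*$ from the previous step, the kernel $\int g(y,\cdot)C(\cdot)\,dy$ asymptotically decomposes $[0,1]$ into the ``clique'' region and its complement. I would then show that as $\beta\to\infty$ the column equation forces $C(y)$ to be $1-o(1)$ on one subset and $o(1)$ on another, and that the \emph{Value-maximizing} choice among candidate columns leaves only two distinct column shapes, one whose support is essentially $[0,\sqrt{e}\,]$ and one essentially empty. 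Equality of \Value{} across all columns then partitions $[0,1]$ into two measurable sets, yielding a bipodal structure. This comparison between the \Value{}s of the two candidate columns, carried out uniformly in $\beta$, is the step I expect to be the main obstacle, because one needs sufficiently sharp bounds on the off-diagonal graphon values to make sure no third column type can tie the maximum.

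Finally, once bipodal structure is established, the problem reduces to a three-parameter optimization (pode sizes $a$ and $1-a$, together with the values of $g$ on the three rectangle classes, subject to the two density constraints), and I would apply the implicit function theorem. The Hessian of $S$ restricted to the bipodal ansatz, computed at the $(e,t)$ point, is negative definite on the tangent space of the constraint set as long as $t$ is close enough to $e^{3/2}$, which follows by a perturbation argument around $g_*$: the Shannon entropy $H$ has $H''(u)\to -\infty$ as $u\to 0$ or $u\to 1$, so the quadratic form is strongly negative in the nearly saturated entries and stays nondegenerate in the remaining direction. Analyticity of the constraint functionals $\varepsilon,\tau$ in the bipodal parameters together with the nondegenerate Hessian then yields a unique, real analytic family of optimizers $g_{e,t}$, completing the theorem.
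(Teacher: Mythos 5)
Your strategy is essentially the paper's: identify the unique boundary graphon $g_*$ (value $1$ on $[0,\sqrt e\,]^2$, $0$ elsewhere), show optimizers at $(e,t)$ converge to it, then use the column \Value{} functional and the Euler--Lagrange equation to show that only two column shapes can maximize $W$, with $\alpha\to+\infty$, $\beta\to-\infty$ and $\beta/\alpha\approx-2/e$. Up to that point the outline is sound (two small cautions: the multiplier $\beta$ tends to $-\infty$, not $+\infty$, near the top boundary; and cut-metric convergence must be upgraded to $L^2$ closeness of columns --- possible here because $g_*$ is $\{0,1\}$-valued --- before you can control $G(x,y)=\int g(x,z)g(y,z)\,dz$ column by column).

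The genuine gap is the jump from ``every column is $1-o(1)$ on one set and $o(1)$ on the other'' to ``bipodal structure,'' and from there to a three-parameter optimization. What your column analysis delivers is only an \emph{approximately} bipodal graphon: bipodality requires $g$ to be exactly constant on each rectangle $I_i\times I_j$, and nothing in your argument rules out small nonconstant fluctuations within the rectangles; consequently the optimizer is not yet known to lie in the finite-dimensional family on which you want to run the implicit function theorem, so the Hessian/IFT step proves uniqueness and analyticity only among bipodal competitors, not for the actual optimizer. The paper closes precisely this hole with a separate rigidity argument: the pointwise condition $H'(g(x,y))=\alpha+\beta G(x,y)$ together with the fact that $g$ is exponentially close to $0$ or $1$ (so $-H''(g)$ is exponentially large compared with $|\beta|$) shows, via the mean value theorem, that the oscillation of $g$ on each rectangle is bounded by a small multiple of the oscillations on the rectangles, forcing all oscillations to be exactly zero. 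You need this step (or some substitute rigidity argument) before the finite-dimensional reduction; once it is in place, exact bipodality of every optimizer plus uniqueness of the bipodal critical point gives global uniqueness, and analyticity follows as you propose.
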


\noindent That is, the region just below the upper boundary is part of a bipodal phase. There is every reason to
believe that this is part of the same bipodal $F(1,1)$ phase that is found just above the ER curve, but this has
not yet been proved. 

All of these theorems can be viewed as extensions of extremal graph theory. 
Pikhurko and Razborov's results \cite{PR} determine {\em unique, entropy-optimal} graphons on the boundary of Figure \ref{fig:phase_space} \cite{RS1}. Theorems \ref{thm:flat}--\ref{thm:top} 
describe the infinite number of distinct neighboring phases. 

\section{Lagrange multipliers and the ``\Value''
functional}\label{sec:Lagrange}

In this section we develop the concept of ``\Value''. This is a
quantity associated with columns of a graphon, that is with the functions 
$g_x(y) = g(x,y)$ of one variable defined by fixing $x$ and allowing $y$ to float. We will see 
in Theorem \ref{thm:worth} that the
columns of an entropy optimal graphon must maximize \Value. Working with
columns, rather than just with the value $g(x,y)$ of the graphon $g$ at
each point $(x,y)$ separately, gives us the analytical control needed to prove
our main theorems.

To accomplish this we need a theory of Lagrange
multipliers.
For these purposes, we treat graphons as elements of $L^2([0,1]^2)$ with the $||\cdot||_2$-norm. The $L^2$-topology is finer than 
the topology defined by the cut distance. All $L^2$-limits are limits in the cut metric, but not vice-versa.
In order to combine our machinery with established theorems about reduced graphons, we will eventually 
need to prove that certain sequences converge in $L^2$ and not just in the cut distance. 

We develop Lagrange multipliers in several steps. First we develop the theory for a 
certain class of variations where the standard theory of functional derivatives applies. This determines our
Lagrange multipliers $(\alpha, \beta)$. We then show that, for other $L^2$-small changes to an optimal graphon, a certain
functional involving $\alpha$ and $\beta$ cannot increase to leading order in the size of the change. We compute the
effect of changing a small set of columns of our graphon and express the difference in this functional in terms of the 
\Values{} of the old and new columns. Since the functional cannot increase, we conclude that the columns of the original
(optimal) graphon must all maximize \Value. 

The simplest changes involve varying the value of our graphon $g$ gradually at each point, as indicated
by a bounded symmetric function $g_1: [0,1]^2 \to \R$. We consider a family of graphons $g_s$ of the form 
\be g_s(x,y) = g_0(x,y) + s g_1(x,y). \label{eq:type-a} \ee
To remain in $\regular$ we must have $0 \le g_0(x,y) + s g_1(x,y) \le 1$ for all sufficiently small $s$.
This can always be achieved by choosing $g_1$ to be supported on a subset of 
$[0,1]^2$ on which $g_0(x,y)$ is bounded away from 0 and 1. For now, we do not consider variations that change 
$g(x,y)$ at points where $g_0(x,y)=0$ or 1. 

The resulting changes to the edge density, triangle density and Shannon entropy are:
\begin{eqnarray} 
\Delta \edens := \edens(g_s)-\edens(g_0) & = & s \iint g_1(x,y)\, dx \, dy + o(s), \cr
\Delta \tdens := \tdens(g_s)-\tdens(g_0) & = & s \iint 3 G(x,y) g_1(x,y)\, dx \, dy + o(s), \cr
\Delta S := S(g_s)-S(g_0) & = & s \iint H'(g_0(x,y)) g_1(x,y)\, dx \, dy + o(s), 
\end{eqnarray}
where 
\be G(x,y) = \int_0^1 g_0(x,z) g_0(y,z) \, dz, \ee 
\noindent and $H'(u)= \ln \left (\frac{1-u}{u} \right )$ is the derivative of $H$ defined in equation (\ref{entropy function}).
The quantities $H'(g_0(x,y))$, $1$ and $3 G(x,y)$ are called the {\em functional derivatives} of 
$S(g)$, $\edens(g)$, and $\tdens(g)$ with respect to $g$ at $g=g_0$ and are sometimes denoted 
\be \frac{\delta S(g)}{\delta g(x,y)}, \qquad \frac{\delta \edens(g)}{\delta g(x,y)}, \qquad 
\hbox{and} \qquad \frac{\delta \tdens(g)}{\delta g(x,y)}. \ee
These functional derivatives 
%are elements of the dual space to the space of possible $g_1$'s. Since we are using the $L^2$ norm on the space of graphons, this makes the functional derivatives 
are elements of 
$L^2(A)$, where $A$ is the subset of $[0,1]^2$ where $0<g(x,y)<1$, and are only defined up to sets of measure zero. 
%When we speak of two functions being equal, the caveat ``almost everywhere'' should always be understood. 

\begin{remark} The graphon $g$ is an element of $L^2([0,1]^2)$ and so 
is not 
literally a function. Rather, it is an equivalence class of functions that agree off  
sets of measure zero. Similarly, the ``function'' $G(x,y)$, which gives the inner product of the 
columns $g_x$ and $g_y$, is only defined up to
sets of measure zero. If $\bar g: [0,1]^2 \to
[0,1]$ is a symmetric function that agrees with $g$ apart from a negligible set, then Fubini's Theorem
says that, for almost every $x$,
the columns $g_x$ and $\bar g_x$ agree except on a negligible subset of $[0,1]$,
and in particular represent the same ``function'' in $L^2([0,1])$.
So for almost every pair $(x,y) \in [0,1]^2$, the inner product of $g_x$ and $g_y$
is the same as the inner product of $\bar g_x$ and $\bar g_y$. That is, the 
functions $G(x,y)$ computed from $g$ and $\bar g$ agree except on a set of 
measure zero. 
\end{remark} 

\begin{lemma} \label{lem:independent} If $g$ is not a constant graphon, then $G(x,y)$ is not constant.  \end{lemma} 
\begin{proof} By Cauchy-Schwarz, 
\be G(x,y) \le \sqrt{G(x,x)G(y,y)} \le \max(G(x,x), G(y,y)). \ee
Proving by contradiction, the only way for $G(x,y)$, $G(x,x)$ and $G(y,y)$ to all be equal is if the columns of $g$ at $x$ and $y$ are identical. But if all the columns of $g$ are identical (and likewise all of the rows, since $g$ is symmetric), then $g$ is a 
constant graphon.
\end{proof}

\begin{theorem} \label{thm:Lagrange} 
Let $g$ be an entropy-maximizing graphon, subject to the constraints $\edens(g)=e$ and $\tdens(g)=t$.
If the function $G(x,y)$ is not constant on the set of points $(x,y)$ where 
$0 < g(x,y) < 1$, then there exist unique Lagrange multipliers $\alpha$ and $\beta$ such that 
\be \label{eq:EL1} H'(g(x,y)) = \alpha + \beta G(x,y) \ee
almost everywhere. Furthermore, the function $g(x,y)$ is bounded away from 0 and 1. 
\end{theorem}

\begin{proof} 
Since $G(x,y)$ is not constant on the set $0<g(x,y)<1$, $G(x,y)$ is not constant  on the set $\epsilon < g(x,y) < 1-\epsilon$ for all sufficiently small $\epsilon$, so the functional derivatives of $\edens$ and $\tdens$ are 
linearly independent on this set, so 
we can vary $\edens$ and $\tdens$ independently by choosing appropriate functions $g_1(x,y)$ that are supported 
on this set. Similarly, if the three functions $\frac{\delta \edens}{\delta g}$, $\frac{\delta \tdens(g)}{\delta g(x,y)}$, and $ \frac{\delta S(g)}{\delta g(x,y)}$ were linearly independent on the set of points where $0<g(x,y)<1$, then we could vary $\edens$, $\tdens$,
and $S$ independently, and in particular we could increase $S$ while keeping $e$ and $t$ exactly constant. Since this
contradicts the optimality of $g$, we conclude that $ \frac{\delta S(g)}{\delta g(x,y)}$ is a linear combination of 
$\frac{\delta \edens}{\delta g}$ and $\frac{\delta \tdens(g)}{\delta g(x,y)}$, so we can write
\be \label{eq:Lagrange-abstract} 
\frac{\delta S}{\delta g(x,y)} = \alpha \frac{\delta \edens}{\delta g(x,y)} + \frac{\beta}{3} \frac{\delta \tdens}
{\delta g(x,y)}. \ee
Since 1 and $G(x,y)$ are linearly independent, the coefficients $\alpha$ and $\beta$ are uniquely defined. 
Plugging the functional derivatives into equation (\ref{eq:Lagrange-abstract}) then gives (\ref{eq:EL1}),
which applies at points where $0<g(x,y)<1$. 

Since $\alpha$ and $\beta$ are finite and since $G(x,y)$ is bounded, $H'(g(x,y))$ is bounded, so $g(x,y)$ cannot be 
arbitrarily close to 0 or 1. Either $g(x,y)$ equals 0 or 1 (in which case equation (\ref{eq:EL1}) does not apply) or 
$g(x,y)$ is bounded away from 0 or 1. 

We now eliminate the first possibility. If $g(x,y)=0$ or 1 on a set of positive measure, we can change the value of 
$g(x,y)$ at such points away from 0 and 1, thereby increasing $S$ greatly while only changing $\edens(g)$ and $\tdens(g)$
slightly. Since we can vary $\edens$ and $\tdens$ independently with an appropriate choice of $g_1$, we can restore
the original values of $\edens$ and $\tdens$ with the resulting change in $S$ being governed by (\ref{eq:Lagrange-abstract}).
The total effect is to increase $S$ while leaving $\edens$ and $\tdens$ fixed, which is a contradiction. 
\end{proof} 

The crux of the proof is the same as the derivation of Lagrange multipliers in finite dimensions. The fact that 
linearly independent derivatives give us the ability to vary $\edens$ and $\tdens$ (and possibly $S$) independently
is just the inverse function theorem in $\R^2$ and $\R^3$. The treatment of the points where $g(x,y)=0$ or 1 is
essentially the same as what we do to maximize a function on the boundary of a domain in $\R^n$. Our setting is the 
infinite-dimensional space of graphons, but the core arguments are just finite-dimensional calculus. 

Theorem \ref{thm:Lagrange} was stated in terms of functional derivatives, but we can also 
speak in terms of small changes to a graphon. As long as the 
functional derivatives of $\edens$ and $\tdens$ are linearly independent on the set of points where $0<g(x,y)<1$, 
we can vary $\edens$ and $\tdens$
independently by choosing an appropriate $g_1(x,y)$, with the resulting change in $S$ being given by 
\be \label{eq:DeltaS} \Delta S = \alpha \Delta \edens + \frac{\beta}{3} \Delta \tdens + 
o(s\|g_1\|_2). \ee

\begin{remark}
But what if a graphon $g$ fails to meet the assumptions of Theorem \ref{thm:Lagrange}, with $g(x,y)$ equaling 0 or 1 on part of the unit square and $G(x,y)$ being constant everywhere else? That is, what if we cannot vary $\edens$
and $\tdens$ independently at $g$?
We call such graphons {\bf singular entropy maximizers}. 
These certainly appear on the boundary of the Razbarov triangle. In the interior they appear on 
the \ER{} curve $t=e^3$ and may appear elsewhere, 
so sometimes extra work is needed to exclude them. 
(See \cite[Theorem 4.1]{KRRS1} for such an argument at a specific phase transition.) 

Fortunately for us, such singular graphons do not appear in any of the regions being studied in this paper. 
We postpone 
%{\sout{the proof of this fact}} \sout{{\color{red} a strategy for establishing this}} 
an explanation of this fact to the end of this section and return to the more typical situation of 
{\em nonsingular} entropy maximizers that satisfy the assumptions of Theorem \ref{thm:Lagrange}. 
\end{remark}

We now consider infinitesimal changes to a graphon obtained by changing $g(x,y)$ by a ``macroscopic 
amount'' on a set of infinitesimal measure $s$.  For instance, we might vary the sizes of the podes in 
a multipodal graphon. Such changes are not covered by Theorem \ref{thm:Lagrange}. 
Nonetheless, they satisfy Lagrange-like {\em inequalities}. 

\begin{proposition} \label{prop:EL-inequality} Suppose that $g_0$ is a non-constant and non-singular
entropy maximizer with Lagrange multipliers 
$\alpha$ and $\beta$. Let $g_s$ be a family of graphons obtained by changing $g_0$ on sets of measure 
$s$. Then 
\be \Delta S \le \alpha \Delta \edens + \frac{\beta}{3} \Delta \tdens + o(s), \label{eq:EL-inequality} \ee
where $\Delta S = S(g_s)-S(g_0)$, $\Delta \edens = \edens(g_s)-\edens(g_0)$, and $\Delta \tdens
= \tdens(g_s)-\tdens(g_0)$. 
\end{proposition} 

\begin{proof} Supposing this to be false, we will construct a variation of $g_0$ that has the same edge 
and triangle densities but more entropy. As previously noted, we can adjust $\edens$
and $\tdens$ independently by adding a function supported on $\epsilon < g(x,y) < 1-\epsilon$, with the resulting
changes in $S$ given by equation (\ref{eq:DeltaS}). Applying these changes to $g_s$ to restore the original values
of $\edens$ and $\tdens$, we should get a change in $S$ that is at least $\Theta(s)-o(s)>0$, which is a contradiction. 

To complete this argument, we must bound the cross terms 
from adding a function of pointwise size $O(s)$ to $g_0$ and in changing $g_0$ on a set of measure $s$. 
Changing $g(x,y)$ by $O(s)$ can only change $H(g(x,y))$ by $O(s \ln(s))$ and can only change
$g(x,y)g(y,z)g(x,z)$ by $O(s)$. Integrating over a region of size $O(s)$, this can
change the entropy by $O(s^2 \ln(s))$, the edge density by $O(s^2)$ and the triangle density by $O(s^2)$, 
resulting in an $o(s)$ change to $S - \alpha \edens - \frac{\beta}{3} \tdens$, which can be absorbed into the
$o(s)$ error term in equation (\ref{eq:DeltaS}).
\end{proof}

The estimate (\ref{eq:EL-inequality}), combined with the theory of Lagrange multipliers for pointwise-small changes (Theorem \ref{thm:Lagrange} as summarized in equation (\ref{eq:DeltaS}))
can be described in terms of a functional
\be \label{free energy} F(g) = S(g) - \alpha \edens(g) - \frac{\beta}{3} \tdens(g). \ee 
If $g$ is a nonsingular constrained entropy maximizer, then pointwise small changes to $g$ cannot change $F$
to first order, while
macroscopic changes on small sets can decrease $F$ but cannot increase $F$ to first order.

The functionals $S$ and $\edens$ are local, in that there is a contribution
from each point $(x,y)$ in $[0,1]^2$ and we integrate the local contributions to
get the global quantity. If the triangle density were also local, 
then $F$ would be the integral of a local
density and our variational equations would come from setting the
derivative of this density with respect to $g(x,y)$ equal to zero. 
That is the typical situation when doing calculus of variations, 
especially in classical and quantum field theory, with 
the global action being the integral of a local Lagrangian density \cite{BS}.

However, the triangle density \be \tdens(g) = \iiint g(x,y) g(y,z)
g(z,x) \, dx \, dy \, dz \ee is {\em not} local. It involves
interactions between the values of $g$ at  the three points  $(x,y)$, $(x,z)$ and $(y,z)$.
To accommodate this complication it is useful to consider macroscopic changes to entire columns. 
We define a quantity that measures the effect of such changes.

\noindent {\bf Definition of \Value{}:} {\em Let $C$ be a possible column of a graphon $g$. That is, $C: [0,1] \to 
[0,1]$ is a Borel measurable function. The {\em \Value} of
$C$ is \be W(C) = 2 \int_0^1 H(C(y)) \, dy -
2 \alpha \int_0^1 C(y)\, dy - \beta \iint C(y) C(z) g(y,z) \, dy\, dz.
\label{eq:value} \ee
Note that this depends explicitly on the graphon $g$ as well as on $C$ and the 
Lagrange multipliers $\alpha$ and $\beta$. }
\medskip

\begin{proposition} Let $(\alpha, \beta)$ be specified and suppose that $\tilde g(x,y) = g(x,y)$ except when $x$ or $y$
lies in a set $I$ of measure $s$. Then 
\be \label{eq:DeltaF} F(g) - F(\tilde g) = \int_I W(g_x)-W(\tilde g_x) \, dx + O(s^2), \ee
where $g_x$ and $\tilde g_x$ are columns of $g$ and $\tilde g$. 
\end{proposition}

\begin{proof} The quantities $\edens(g)$, $S(g)$ and $\tdens(g)$ are all double or triple integrals over 
$[0,1]^2$ or $[0,1]^3$. The integrands for $g$ and $\tilde g$ are identical except where one of the variables lies in 
$I$. To compute $F(g)-F(g')$, we must only keep the contributions of $x \in I$, multiply by 2 or 3 to allow for
the similar contributions of $y \in I$ or $z \in I$, and make adjustments for where two or three variables are in $I$.
Since the integrand is bounded and the set of points where multiple variables lie in $I$ only has measure $O(s^2)$, we
can compute $F(g)-F(\tilde g)$ to within $O(s^2)$ by assuming that $x \in I$ and leaving $y$ and $z$ free. 

We begin with the edge density: 
\be \edens(g) - \edens(g') = 2 \int_{x \in I} \int_{y\in[0,1]} g_x(y)-\tilde g_x(y) \,dy \, dx + O(s^2). \ee
The entropy is similar:
\be S(g) - S(\tilde g) = 2 \int_{x \in I} \int_{y\in[0,1]} H(g_x(y))-H(\tilde g_x(y)) \, dy \, dx + O(s^2). \ee
The triangle density has a prefactor of 3 instead of 2 because it is a triple integral:
\begin{eqnarray} \tdens(g)-\tdens(\tilde g) &=& 3 \int_{x \in I} \iint_{y,z\in[0,1]} g_x(y)g_x(z)g(y,z)-\tilde g_x(y)\tilde g_x(z)\tilde g(y,z) \,dy \, dz \, dx + O(s^2) \cr 
 &=& 3 \int_{x \in I} \iint_{y,z\in[0,1]} \left (g_x(y)g_x(z)-\tilde g_x(y)\tilde g_x(z)\right ) g(y,z) \,dy \, dz \, dx + O(s^2), 
 \end{eqnarray}
 where in the last line we used the fact that $\tilde g(y,z)$ only differs from $g(y,z)$ when $y\in I$
 or $z \in I$. 
 Multiplying the change in $\edens$ by $-\alpha$ and the change in $\tdens$ by $-\beta/3$ and adding terms, we get
 (\ref{eq:DeltaF}). 
 \end{proof}

\begin{theorem}  \label{thm:worth} If $g$ is a nonsingular entropy maximizer with Lagrange multipliers 
$(\alpha, \beta)$ then $g$ agrees off a set of measure zero with a graphon where every
column maximizes $W$.  In particular, every column must have
the same \Value. \end{theorem}

\begin{proof} First we show that almost every column maximizes \Value. 
Since we are working in $L^2$, where 
functions that differ on sets of measure zero are considered equivalent, we can get an equivalent representative by
replacing any columns that don't maximize \Value{} with ones that do.

Let $W_{max}$ be the supremum of the {\Values} of all possible columns. 
If there is a set of columns of positive measure whose {\Values} are strictly less than $W_{max}$
then for some $\delta>0$ there is a set of columns of positive measure whose {\Values}  are all bounded above by $W_{max}-2\delta$. 
Replacing a subset $I_s$ of measure $s$ of such columns with a column $C$ whose {\Value} is within 
$\delta$ of $W_{max}$ will increase $F$ by at least 
$s\delta + O(s^2)$, contradicting Proposition \ref{prop:EL-inequality}. 

(Note that ``change a set of columns to $C$'' leads to an ambiguity for $g(x,y)$ when both $x$ and $y$ are in $I_s$. We can resolve this 
ambiguity by setting $g(x,y)=1$ on $I_s \times I_s$, or by setting $g(x,y)=0$, or by picking any other symmetric 
function in this square. Since the square where the ambiguity occurs has area $s^2$, different choices will yield 
values of $F$ that differ by $O(s^2)$, which does not affect the violation of Proposition \ref{prop:EL-inequality}.)
\end{proof}

Having established the variational equations for nonsingular entropy maximizers, both for points with 
Theorem \ref{thm:Lagrange} and for columns with Theorem \ref{thm:worth}, we return to the (non)existence of 
singular entropy maximizers. If $g$ is a singular entropy maximizer, 
we call $\tdens(g)$ a 
{\em singular value of $t$} for the given edge density $e=\edens(g)$.
We claim that singular $t$'s are too sparse to matter. We begin with their measure.  

\begin{lemma}\label{lem:nonsingular}  
For each $e \in (0,1)$, the set of singular $t$-values has measure zero. \end{lemma}

\begin{proof} As explained below, the Boltzmann entropy function $\B(e,t)=\max_{g\in \regular_{e,t}} S(g)$ 
for fixed $e$ is monotonically increasing
in $t$ on $(t_{min}, e^3)$ and monotonically decreasing on $(e^3, e^{3/2})$. This 
makes $d\B$ a finite measure on $(t_{min},e^3)$ and makes $-d\B$ a finite measure on $(e^3,e^{3/2})$.
We will show that the measure is singular at all singular $t$-values. The theorem then follows from the 
fact that the support of the singular part of a finite measure on an interval has zero Lebesgue measure. 

To see the monotonicity of $\B$, let $g_0$ be an entropy maximizer at $(e,t)$ with $t<e^3$
and consider the family of graphons 
\be \label{eq:g-s} g_s(x,y) = se + (1-s) g_0(x,y), \ee
which is defined for all $s \in [0,1]$. $S(g_s)$ is an increasing function of $s$ (thanks to the 
concavity of $H(u)$) and in particular $S(g_s)>S(g_0)$ for all $s>0$. 
Since $\tdens(g_s)$ goes from $t$ to $e^3$ as $s$ goes from 0 to 1, 
and since $S(g_s)$ is a lower bound for $\B(e, \tdens(g_s))$, 
$\B(e, t') > \B(e,t)$ for every $t' \in (t,e^3)$. 

We also note that $\tdens(g_s)>t$ for all $s>0$, since otherwise, by the 
intermediate value theorem, there would be a positive $s$ with $\tdens(g_s)=t$. Since $S(g_s)>S(g_0)$,
as we just discussed, that would contradict the optimality of $g_0$. 

Now suppose that $t$ is singular, with a singular entropy maximizer $g_0$. 
Since $g_0(x,y)$ equals 0 or 1 on a set of positive measure, $S(g_s)-S(g_0)$ scales as $s \ln(1/s)$ as $s 
\to 0$. However, $\tdens(g_s)$ is
a polynomial in $s$ and cannot grow faster than linearly for small $s$. Thus 
\be \lim_{s \to 0^+} \frac{S(g_s)-S(g_0)}{\tdens(g_s)-\tdens(g_0)} = +\infty.  \ee
Since $\B(e,t)=S(g_0)$, and since $S(g_s)$ is a lower bound for $\B(e, \tdens(g_s))$, 
$\B$ must be
increasing infinitely fast at $t$. That is, $t$ is a singular point of the measure $d \B$. 

The exact same arguments work for $t>e^3$, only with $\tdens(g_s)$ being a decreasing function of $s$,
with $\B$ being a decreasing function of $t$,
and with $\B$ decreasing at infinite rate at singular $t$-values.  \end{proof} 

We note that Lemma \ref{lem:nonsingular} proves there are no singular entropy maximizers in the regions studied in this paper.

\begin{theorem} \label{thm:no-singular} 
Fix the edge density $e$ and consider an open interval $I$ of triangle densities $t$. Suppose that there is a
differentiable function $f(t)$, defined for all $t \in I$, that equals the Boltzmann entropy $\B(e,t)$ 
whenever $t$ is non-singular. Then every $t \in I$ is non-singular. 
\end{theorem} 

\begin{proof} The Boltzmann entropy is never 
differentiable at $t=e^3$ \cite{RS3}, so we only need to consider intervals $I$ that are either above or
below the \ER{} curve. By Lemma \ref{lem:nonsingular}, the set of singular $t$-values in $I$ has measure zero, so
the complement of that set is dense in $I$. 
Since $\B(e,t)$ is monotonic on $I$ and equals a (differentiable and therefore) continuous function $f(t)$ on
a dense subset of $I$, it must equal $f(t)$ on all of $I$. But then $\B(e,t)$ is differentiable in $t$ for
all $t \in I$, so there are no singular $t$-values in $I$. 

\end{proof}

%\sout{Using Theorems \ref{thm:worth} and \ref{thm:no-singular}, }
Theorems \ref{thm:worth} and \ref{thm:no-singular} give us a strategy by which we can now prove (with much work!) Theorems
\ref{thm:flat}, \ref{thm:scallop} and \ref{thm:top}. In each case, we use variational equations 
that apply whenever $t$ is 
non-singular. For all such $t$, we show that the optimal graphon must take a certain form, with an entropy that is 
(the restriction of) a smooth function of $t$. Theorem \ref{thm:no-singular} then implies that there are no 
singular $t$'s and that 
our calculations apply to all $t$. That is, while we cannot exclude singular entropy maximizers a priori, 
we are able to exclude them a posteriori. 

We note that the analysis of optimizing the function 
$F$ in equation (\ref{free energy}) is the starting 
point of our discussion of ERGMs in Section \ref{sec:No-ERGM}.
\section{Proof of Theorem \ref{thm:flat}} \label{sec:flat}

We now prove a slightly more quantitative version of 
Theorem \ref{thm:flat}: 
\begin{theorem} \label{thm:flat2}
For each fixed $e<1/2$ and all $t$ sufficiently small, the 
optimal graphon with edge/triangle densities $(e,t)$ is unique and is symmetric bipodal, with parameters that vary analytically with $(e,t)$. As $t \to 0$,
the increase $\Delta \B = \B(e,t)-\B(e,0)$ in the 
Boltzmann entropy scales as $t \ln(1/t)$ and the Lagrange multiplier 
$\beta$ scales as $\ln(1/t)$. 
\end{theorem}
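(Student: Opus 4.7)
The plan is to identify a one-parameter family of candidate symmetric bipodal graphons, solve the optimization within that family explicitly, and then use the \Value{} functional of Section~\ref{sec:Lagrange} to promote this candidate to the unique global optimum. Parametrize a symmetric bipodal graphon of edge density $e$ by its diagonal block value $a\in[0,2e)$, with off-diagonal value $d=2e-a$. A direct computation gives
\be \tdens(a) = \tfrac{1}{4}a^3 + \tfrac{3}{4}a(2e-a)^2, \qquad S(a) = \tfrac{1}{2}H(a) + \tfrac{1}{2}H(2e-a). \ee
For small $t$, the equation $\tdens(a)=t$ has a unique small root $a(e,t)\sim t/(3e^2)$, real-analytic in $(e,t)$; this defines the candidate $g^*=g^*_{e,t}$. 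Using $H(a)\sim -a\ln a$ yields the entropy increase $\Delta\B \sim \tfrac{t}{6e^2}\ln(1/t)$, and the Euler--Lagrange pair $H'(a)=\alpha+\beta G_{11}$, $H'(d)=\alpha+\beta G_{12}$, with $G_{11}=(a^2+d^2)/2\to 2e^2$, yields $\beta\sim \tfrac{1}{2e^2}\ln(1/t)$. These are the claimed scalings, modulo proving optimality and uniqueness.

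For the compactness step, let $g_n$ be optimal at $(e,t_n)$ with $t_n\to 0^+$. Upper semi-continuity of $S$ and compactness of the constraint set in the cut metric produce subsequential limits maximizing $S$ subject to $(\edens,\tdens)=(e,0)$. A standard extremal argument shows that for $e<1/2$ the unique such reduced maximizer is the bipartite $g^*_{e,0}$ with $(a,d)=(0,2e)$. Hence $g_n\to g^*_{e,0}$ in cut metric, and the associated multipliers $\beta_n$ diverge at the rate computed above.

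The core step is to force bipodality via the \Value{}: by the theorem of Section~\ref{sec:Lagrange}, every column $C$ of the optimal $g_n$ must maximize
\be W(C) = \int\big(H(C(y))-\alpha_n C(y)\big)\,dy - \tfrac{\beta_n}{2}\iint C(y)C(z)g_n(y,z)\,dy\,dz. \ee
Because $g_n\to g^*_{e,0}$, the stationarity condition $H'(C(y))=\alpha_n+\beta_n\int g_n(y,z)C(z)\,dz$ inherits the approximately block-constant structure of the candidate. I would restrict first to two-block $C$, reducing to a $2\times 2$ fixed-point system whose analysis in the regime $\beta_n \gg 1$ shows exactly two solutions, close respectively to the $I_1$ and $I_2$ columns of $g^*$, with every other critical point of $W$ strictly suboptimal. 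Next, since $g_n$ is close to bipodal, the right-hand side of the stationarity condition is nearly block-constant in $y$, forcing $C$ itself to be nearly and then exactly bipodal. Consequently $g_n$ admits only two column profiles up to null sets and is bipodal; swap symmetry and uniqueness of the reduced optimum upgrade this to symmetric bipodality and identify $g_n$ with $g^*_{e,t_n}$. Analyticity in $(e,t)$ at any fixed small $t_0>0$ then follows from the implicit function theorem within the bipodal family.

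The hard part is the uniformity of this column analysis as $t\to 0^+$: because $a\to 0$, where $H'$ diverges, the two basins of $W$ draw close together and a naive contraction argument does not close. One must track the precise $\beta\ln(1/a)$ cancellation --- exactly the cancellation that produced the scalings in the first paragraph. This is where the semi-local \Value{} becomes essential; the purely pointwise equation~\eqref{eq:EL1} alone is blind to which basin a column lives in, whereas $W$ distinguishes them.
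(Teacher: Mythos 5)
Your overall route is the same as the paper's (convergence of optimizers to the unique boundary graphon, column analysis via the \Value{} functional to kill the exceptional set, a bootstrap to exact bipodality, then a finite-dimensional reduction), and your scaling computations $a\sim t/(3e^2)$, $\beta\sim\ln(1/t)$, $\Delta\B\sim t\ln(1/t)$ agree with the paper. But two steps that carry real weight are asserted rather than proved. First, the passage from ``nearly block-constant'' to \emph{exactly} bipodal cannot follow from approximate statements alone: the mechanism is a self-bounding fluctuation estimate. In the paper one lets $\Delta_A,\Delta_B,\Delta_D$ be the oscillations of $g$ on the three rectangle types, uses the pointwise equation $H'(g(x,y))=\alpha+\beta G(x,y)$ together with the mean value theorem to bound each oscillation by a multiple of the others, and closes the loop with $\Delta_D\le C\beta^2\exp(-2e^2\beta)\,\Delta_D$, forcing $\Delta_D=\Delta_A=\Delta_B=0$ once $\beta$ is large. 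Your phrase ``nearly and then exactly bipodal'' gestures at this but supplies no contraction; as written the step is circular (you use near-bipodality of $g_n$ in $z$ to conclude bipodality of its columns).

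Second, and more seriously, ``swap symmetry and uniqueness of the reduced optimum upgrade this to symmetric bipodality'' is not a valid argument. Symmetry of the constraints does not force symmetry of the optimizer --- elsewhere in the same phase diagram (the $B(1,1)$ phase just below the \ER{} curve) the optimal graphon is bipodal and \emph{asymmetric}, and an asymmetric optimizer here would come in a swap-related pair, so uniqueness cannot be invoked either: it is part of what must be proved. Once exact bipodality is known, the bipodal graphons with densities $(e,t)$ form a two-parameter family, and one must actually optimize over it. The paper does this quantitatively: unequal pode sizes cost entropy $-2\ln(1-2e)\,\Delta c^2$ at $t=0$, while the potential gain from $t>0$ is only $O\bigl(t(\ln(1/t))^2\Delta c^2\bigr)$, so $c=1/2$ exactly; then $A=B$ follows from the Euler--Lagrange equations $H'(A)=\alpha+\beta(A^2+D^2)/2$, $H'(B)=\alpha+\beta(B^2+D^2)/2$ and the strict monotonicity of $H'$. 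Without an argument of this kind your proof establishes bipodality but not symmetry, uniqueness, or (hence) analyticity of the optimizer. A minor further point: you get convergence to $g_0$ only in the cut metric, whereas your pode construction and the control of $G(x,y)=\langle g_x|g_y\rangle$ need $L^2$-type column estimates; this can be repaired (cut-norm control of $\int(g_n-g_0)(y,z)C(z)\,dz$ in $L^1$ plus an exceptional-set argument), but it needs to be said.
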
 

\subsection{Strategy}
The proofs of Theorems \ref{thm:flat2}, \ref{thm:scallop2} and \ref{thm:top2}
all follow the same general outline. 
We will present the proof of Theorem \ref{thm:flat2} in
full detail. The subsequent proofs of Theorems \ref{thm:scallop2} 
and \ref{thm:top2} will 
be somewhat abbreviated, concentrating on what is different in those
cases.

Using the fact that the unique entropy maximizing graphon $g_0$
at $(e,0)$ is symmetric bipodal, we show that the 
optimizing graphons at points near the boundary have the same general structure
away from an exceptional set of small area. Specifically, we partition the unit interval 
into subsets $I_1$, $I_2$ and $I_3$ such that the columns $g_x$ of 
the optimal graphon are $L^2$-close to the columns of the
first pode of $g_0$ when $x \in I_1$ and are $L^2$-close to the columns 
of the second pode of $g_0$ when $x \in I_2$, and where $I_3$ has 
small measure. At this stage, we do not have any control over $g_x$
when $x \in I_3$. 

Knowing the columns $g_x$ when $x \in I_1 \cup I_2$ 
(to within a small error in $L^2$) gives us pointwise control of the function 
$G(x,y)$ on $(I_1 \cup I_2) \times (I_1 \cup I_2)$. The
Euler-Lagrange equations (\ref{eq:EL1}) then give us pointwise estimates of 
$g(x,y)$ in each of the four main rectangles. 

We then study the \Value{} functional $W(C)$. The dependence of this functional on the graphon $g$ comes via
the integral $\iint C(y) C(z) g(y,z) \, dy \, dz$. Since $C$ is bounded,
and since we know $g(y,z)$ away from a set of small measure, we have 
good control over $W(C)$. 
We determine that a \Value-maximizing column can only take one of two approximate forms, namely those 
exhibited by $g_x$ for $x \in I_1$ and for $x \in I_2$. We then reassign each point $x \in I_3$ to $I_1$ 
or $I_2$, depending on which \Value-maximizing form $g_x$ takes. 
The result is then a graphon with two (approximate) podes. 

Using the pointwise equations (\ref{eq:EL1}), we bound the variation of $g(x,y)$ in each rectangle $I_i \times I_j$ 
by a multiple of the
variation in a neighboring rectangle. Combining these results, the variation in each 
rectangle is bounded by a small multiple of itself, and so must 
be zero. That is, our optimal
graphon must be exactly bipodal.

The space of bipodal graphons with given values of $(e,t)$ is only 2-dimensional. Using ordinary 2-dimensional
calculus, we determine that the entropy $S(g)$ is maximized when the graphon is symmetric. 

To account for the fact that the Lagrange multipliers $\alpha$ and $\beta$ are only defined 
for almost every $t$ and not necessarily for every $t$, we analyze optimal graphons in two passes. In the first 
pass we use the \Value{} function and pointwise equations (\ref{eq:EL1}), as outlined above, to establish that the optimal graphon is symmetric bipodal for almost every 
$t$ that is sufficiently small. This is the bulk of the proof of Theorem \ref{thm:flat2}. 

This shows that the Boltzmann entropy is almost everywhere 
equal to the Shannon entropy of a symmetric bipodal graphon, which is a differentiable function of $t$ for 
each $e$. 
By Theorem \ref{thm:no-singular}, this then implies that there are no singular values of $t$ and that our arguments apply at 
{\em all} sufficiently small values of $t$.

The same two-pass argument applies to the proofs of Theorems \ref{thm:scallop2} and \ref{thm:top2}, only replacing
``symmetric bipodal'' with the particular graphon symmetry described in those theorems. Specifically,
in the first pass we show that the Boltzmann entropy is almost everywhere equal to the maximum Shannon entropy
among multipodal graphons of a certain sort. The solution to the resulting finite-dimensional maximization problem
yields a smooth function of $t$. Theorem \ref{thm:no-singular} then says that our results apply at every $t$.

\begin{remark} As is standard when working with $L^2$ spaces, the proofs of Theorems 
\ref{thm:flat2}, \ref{thm:scallop2} and \ref{thm:top2} are written as if our graphons were actual functions
on $[0,1]^2$. But in fact they are equivalence classes of functions that agree away from a set of measure
zero. This has several consequences, none of which materially affect the flow of the proofs: 
\begin{itemize}
\item When we use the variational equations (\ref{eq:EL1}), the results apply almost everywhere, not
literally everywhere.  Whenever we use those equations to compute an upper or lower bound on a graphon, that bound should always
be understood to mean ``apart from on a set of measure zero''. 
\item When we speak of the ``maximum'' value of a graphon $g$ on a region, we actually mean the 
essential supremum of the function, namely the smallest number $M$ such that $g(x,y) \le M$ on a set of 
full measure. The ``minimum'' is similar.
\item Since we are working with functions mod sets of measure zero, we are free to change the value of 
our graphon on sets of measure zero whenever we wish. In this way, we could make the variational equations
apply everywhere, or we could make the maximum of a function equal the essential supremum. We 
{\em could}, but we won't actually subject the reader to such painstaking bookkeeping! 
\end{itemize}
Instead, we will {\bf not} keep track of sets of measure zero in these proofs, such as deciding 
whether a 
pode contains its endpoints. All the important properties of graphons (or at least all the properties
considered in this paper) are based on integrals, for which sets of measure zero don't matter at all.
\end{remark}

Having explained the 
process in all three settings, we return our focus to the first pass.

\subsection{Defining approximate podes}

There is a unique entropy maximizer $g_0$ at $(e,0)$ on the bottom boundary of the Razborov triangle
(up to measure-preserving transformations of the 
unit interval, as usual). 
This graphon
is symmetric bipodal, taking values 0 on the diagonal blocks and $2e$ on the off-diagonal blocks. As we
approach the bottom boundary of the Razborov triangle, we claim that 
any sequence $\{g_i\}$ of entropy maximizers must converge (after appropriate measure-preserving transformations) 
to $g_0$ in $L^2$. 

To see this, we invoke the compactness of the space of reduced graphons in the cut metric. A subsequence 
must converge to a limit $[g_\infty]$ in the cut metric. 
Lemma 2.1 in \cite{CV} proves that $S$ is upper-semicontinuous on $\reduced$ in the cut metric $\delta_{cut}$. This implies that the limit of the entropies of the entropy maximizers is at least $S([g_0])$, so we must have 
$S([g_\infty]) \ge S([g_0])$. But $[g_0]$ is the unique entropy maximizing reduced graphon at $(e,0)$, so $[g_\infty] = [g_0]$. 

The entire sequence $\{[g_i]\}$, and not just a subsequence, must converge to $[g_0]$. 
If it did not, we could find a subsequence where all points
were bounded away from $[g_0]$ in the cut metric $\delta_{cut}$. Applying the previous argument to this subsequence would then yield
a contradiction. That is, after applying appropriate measure-preserving transformations of $[0,1]$, the sequence 
$\{g_i\}$ of entropy maximizing graphons must converge to $g_0$ in the cut distance $d_{cut}$. 

Let $I_1=[0,1/2]$ and $I_2=[1/2, 1]$ be the two podes of $g_0$. 
By the definition of the cut distance, the average value of $g_i$ must converge 
to 0 on $I_1 \times I_1$ and $I_2 \times I_2$ and to $2e$ on $I_1 \times I_2$ and $I_2 \times I_1$. The variance of $g_i$ 
must go to zero on each of these rectangles, or else $\lim S(g_i)$ would be strictly less than $S(g_0)$. 
Having $\lim S(g_i) < S(g_0)$ is impossible because there exist explicit symmetric bipodal graphons with $t \to 0$ whose entropies give a lower bound for $S(g_i)$ and whose entropies approach $S(g_0)$ as $t \to 0$. 

Since the mean of $g_i$ in each rectangle approaches the value of $g_0$ and since the variance goes 
to zero, $\{g_i\}$ approaches $g_0$ in $L^2$.
 That is, for every 
$\epsilon >0$ there is a $\delta >0$ such that, for all $t<\delta$ and all optimal graphons $g$ at $(e,t)$, 
applying a measure-preserving transformation of $[0,1]$ to $g$ we have
$\|g-g_0\|_2 < \epsilon$.
(Note that we have not assumed that the optimal graphon $g$  
is unique. That will be proven in due course.) 

Let $g$ be such an optimal graphon for a particular value of  $(e,t)$. Then 
\be \epsilon^2 \ge \int_0^1 dx \int_0^1 dy \big ( g(x,y) - g_0(x,y) \big )^2, \ee
so 
\be \int_0^1  \big ( g(x,y) - g_0(x,y) \big )^2 dy < \epsilon, \ee
except on a set of $x$'s of measure $\epsilon$ or less. 
Call that exceptional set $I_3$. Let $I_1$ and 
$I_2$ be the intersection of $I_3^c$ with $[0, 1/2]$ and $[1/2, 1]$, respectively. Let
$C_1$ and $C_2$ to be the columns of $g_0$ on the two podes, namely $2e$ times the indicator functions 
of $[1/2, 1]$ and $[0,1/2]$, respectively. We have 
broken the unit interval into three pieces $I_1$, $I_2$, $I_3$, such that:
\begin{itemize}
\item For all $x \in I_1$, $\| g_x - C_1\|_2 < \sqrt{\epsilon}$.
\item For all $x \in I_2$, $\| g_x - C_2 \|_2 < \sqrt{\epsilon}$. 
\item When $x\in I_3$ we do not yet have any estimates on $g_x$.
\end{itemize}
We will refer to the sets $I_1$, $I_2$ and $I_3$ as podes, even though we are {\bf not} assuming that
the graphon $g$ is exactly tripodal. 

\subsection{Variational equations} We now use the pointwise variation equations (\ref{eq:EL1}) to get
some preliminary estimates on $g(x,y)$. The first two derivatives of the
function $H$ are
\be H'(u) = \ln(1-u) - \ln(u), \qquad H''(u) = - \left ( \frac{1}{u} + \frac{1}{1-u} \right ). \ee
The quantity $G(x,y)$ is the 
$L^2$-inner product of $g_x$ and $g_y$, which we denote 
$\langle g_x | g_y \rangle$. That is,
\be G(x,y) = \langle g_x | g_y \rangle = \int_0^1 g(x,z) g(y,z) dz. \ee
If $x$ and $y$ are both in $I_1$, 
or both in $I_2$, then $G(x,y) = 2e^2 + O(\sqrt{\epsilon})$. If one is 
in $I_1$ and the other is in $I_2$, then $G(x,y) = O(\sqrt{\epsilon})$. 
If either or both are in $I_3$, then our estimates do not apply. 

For $(x,y) \in I_1 \times I_1$ or $I_2 \times I_2$, we have 
\be H'(g(x,y)) = \alpha + 2 \beta e^2 (1 + O(\sqrt{\epsilon})) = 2 \beta e^2 (1 + O(\sqrt{\epsilon})), \ee
so 
\be g(x,y) = \exp(-2e^2 \beta(1+O(\sqrt{\epsilon})). \ee
(Since $\beta$ is divergent as $t \to 0$ but $\alpha$ is not, we can absorb $\alpha$ into the 
$O(\beta \sqrt{\epsilon})$ error.) 
This means that the contribution of $g(x,y)$ in $I_1 \times I_1$ or $I_2 \times I_2$ to $\beta G$ goes as 
$\beta$ times a negative exponential in $\beta$, and thus has an 
extremely small effect on the value of 
$g(x,y)$ in $I_1 \times I_2$ or $I_2 \times I_1$. 

However, we cannot yet precisely estimate 
$g(x,y)$ in those regions because $G(x,y) = \langle g_x | g_y \rangle$ also gets a contribution, potentially
of order $\epsilon$, from $z \in I_3$.

\subsection{Maximizing \Value{} and eliminating $I_3$}

Let $C: [0,1] \to [0,1]$ be a function whose \Value{} we wish to 
estimate. Let 
\be a = 2 \int_0^{1/2} C(y)\, dy, \qquad b = 2 \int_{1/2}^1 C(y) \, dy. \ee
That is, $a$ and $b$ are the average values of $C$ on $[0,1/2]$ and 
$[1/2,1]$. 

We now consider the three expressions that contribute to $W(C)$:
\begin{itemize}
\item The entropy term $2\int_0^1 H(C(y)) \, dy$ is bounded above by 
$H(a)+H(b)$, thanks to $H''$ being everywhere negative. 
\item The term $-2\alpha \int_0^1 C(y) \, dy$ is exactly $-\alpha (a+b)$. 
\item The term $-\beta \iint C(y) C(z) g(y,z) \, dy \, dz$
is approximately $-e\beta ab$. 
\end{itemize}

Recall that the \Value{} of a column is 
\be W(C) = 2 \int_0^1 H(C(y)) \, dy -
2 \alpha \int_0^1 C(y)\, dy - \beta \iint C(y) C(z) g(y,z) \, dy\, dz.
\label{eq:value2} \ee
If $g$ were equal to $g_0$, maximizing $W(C)$ would involve taking $C(y)$ to be constant on 
$[0,1/2]$ and constant on $[1/2,1]$ and choosing $a$ and $b$ 
to maximize 
\begin{equation} H(a) + H(b) - \alpha (a+b) - e \beta ab. 
\label{eq:ELab} \end{equation}
(Because of the small differences between $g$ and $g_0$, this procedure only gives approximate \Value{}
maximizers, but that is enough for our purposes.) 

Setting the derivatives of (\ref{eq:ELab}) to zero gives the equations
\be H'(a) = \alpha + \beta eb, \qquad H'(b) = \alpha + \beta ea. \ee
Since there is a \Value-maximizer with $a$ close to 0 and $b$ close 
to $2e$ (namely any column with $x \in I_1$), and another 
\Value-maximizer with $a$ close to $2e$ and $b$ close to 0, $\alpha$
must be close to $H'(2e)$, while $\beta$ is large and positive. 

If $a$ is substantially nonzero (say, bigger than $1/\sqrt{\beta}$),
then $e\beta a$ is gigantic and $b$ is extremely close to zero, being 
$O(\exp(-\sqrt{\beta}))$. This makes  
$e\beta b$ tiny so $H'(a) \approx \alpha \approx H'(2e)$ and 
$a \approx 2e$. Similarly, if 
$b$ is substantially nonzero then $a$ is tiny and $b \approx 2e$.
In both those cases, $W(C) \approx H(2e) - 2eH'(2e) = - 
\ln(1-2e)$. The third possibility is that $a$ and $b$ are both 
tiny, but in that case $W(C) \approx 0$, which is strictly less than $-\ln(1-2e)$. 

The upshot is that there are three stationary points of 
(\ref{eq:ELab}) but only two maxima, one that resembles $g_x$ for 
$x \in I_1$ and one that resembles $g_x$ for $x \in I_2$. Since every 
column $g_x$ with $x \in I_3$ must be a \Value-maximizer, and 
since every \Value-maximizer must come close to maximizing (\ref{eq:ELab}), every column $g_x$ with $x \in I_3$ is close in $L^2$ to the columns 
for $x\in I_1$ or $I_2$. We can then 
reassign the points of $I_3$ to $I_1$ or $I_2$ depending on the nature
of $g_x$.

\subsection{Exact bipodality}

Our next step is to upgrade our $L^2$ estimates on the forms of the different 
columns into pointwise estimates. Thanks to each column of $g$ being
$L^2$-close to a column of $g_0$, the function 
$G(x,y) = \langle g_x | g_y \rangle$ is pointwise close to $2e^2$ on 
$I_1 \times I_1$ and on $I_2 \times 
I_2$. By (\ref{eq:EL1}), this forces $g(x,y)$ to be exponentially 
small (specifically, $\exp(-\Theta(\beta)))$ in 
these quadrants. This in turn makes $G(x,y)$ exponentially small on
$I_1 \times I_2$ and $I_2 \times I_1$, which means that $H'(g)$ 
is exponentially close to $\alpha$ in these quadrants, and 
therefore that $g(x,y)$ is pointwise close to constant in these
rectangles. 

We next show that the optimal graphon $g$ 
is {\em exactly} constant on each of those rectangles. 
Let $A$, $B$, and $D$ be the average values of $g(x,y)$ on $I_1 \times I_1$, $I_2 \times I_2$ and 
$I_1 \times I_2$, respectively. Let $\Delta_A$, $\Delta_B$ and $\Delta_D$ be the difference between 
the maximum and minimum values of $g(x,y)$ on those rectangles. Let $c$ be the width of $I_1$.

On $I_1 \times I_1$, the quantity $G(x,y)$ is bounded below by 
\be c (A- \Delta_A)^2 + (1-c) (D - \Delta_D)^2 \ee
and bounded above by 
\be c (A+ \Delta_A)^2 + (1-c) (D + \Delta_D)^2. \ee
The difference between these two expressions is $4cA\Delta_A + 4(1-c)D \Delta_D$. 

All points satisfy the variational equations 
\be H'(g(x,y)) = \alpha + \beta G(x,y). \ee
Subtracting this equation at the smallest value of $G(x,y)$ from that at the largest value, applying 
the mean value theorem to the left hand side, and applying our bounds on the variation in 
$G(x,y)$, we obtain
\be - H''(A_0) \Delta _A \le 4Ac\beta \Delta_A + 4D(1-c)\beta \Delta_D, \ee
where $A_0$ is some number between $A+\Delta_A$ and $A-\Delta_A$.  A little algebra then shows that 
\be \Delta_A \le \frac{4D(1-c)\beta}{-H''(A_0) - 4Ac\beta} \Delta_D \le \frac{3D\beta}{-H''(A)} \Delta_D,\ee
where we have used the difference between 3 and $4(1-c) \approx 2$ to cover for simplifying the 
denominator and replacing $A_0$ with $A$. A similar analysis on $I_2 \times I_2$ shows that 
\be \Delta_B \le \frac{3D\beta}{-H''(B)} \Delta_D. \ee

Meanwhile, on $I_1 \times I_2$, $G(x,y)$ is bounded above and below by 
\be c(A \pm \Delta_A)(D \pm \Delta_D) + (1-c)(B \pm \Delta_B)(D \pm \Delta_D), \ee
where the plus signs give an upper bound and the minus signs give a lower bound. The difference between
the upper and lower bounds is 
\be 2 \beta [ (cA+(1-c)B)\Delta_D + (c \Delta_A + (1-c) \Delta_B) D]. \ee
This implies that 
\be -H''(D_0) \Delta D  \le 2\beta (cA+(1-c)B) \Delta_D + 2 \beta D (c \Delta_A + (1-c)\Delta_B). \ee
A little algebra then gives 
\begin{eqnarray} 
\Delta_D &\le& \frac{2 \beta D (C \Delta A + (1-c)\Delta B)}{-H''(D_0)-2\beta(cA+(1-c)B} \cr 
& \le & \frac{3 \beta D(\Delta_A + \Delta_B)}{-2H''(D)} \cr 
&\le & \frac{9 \beta^2 D^2}{-2H''(D)} \left( \frac{-1}{H''(A)} + \frac{-1}{H''(B)} \right ) \Delta_D. 
\end{eqnarray}

Now recall that $A$ and $B$ are exponentially small in $\beta$ and that 
\be \frac{-1}{H''(A)} = A(1-A) < A \qquad \hbox{and} \qquad \frac{-1}{H''(B)} = B(1-B) < B.\ee
The coefficient of $\Delta_D$ on the right hand side of the last 
line goes to zero roughly as $\beta^2 \exp(-2e^2 \beta)$ as $t \to 0$ and $\beta \to \infty$. Once
the coefficient is less than one, the only solution is $\Delta_D = 0$, which then implies that 
$\Delta_A=0$ and $\Delta_B=0$. In other words, our optimal graphon is exactly bipodal. 

\subsection{Symmetric bipodality}

All that remains is showing that the best bipodal graphon is symmetric, with pode sizes $\frac12$ and 
$\frac12$ and with $A=B$. This requires extensive 
calculations but no sophisticated analysis. Ultimately, it
is just a (grungy) problem in multivariable calculus as follows.

For each triple $(e,t,c)$ we consider the bipodal graphon that maximizes the entropy, subject to the 
constraints that the edge and triangle densities are $(e,t)$ and that the first pode has width $c$. Let
$S(e,t,c)$ be the entropy of this optimal graphon. We must show that this entropy is maximized at 
$c=1/2$. Note that this function is analytic in $c$ for fixed $(e,t)$, insofar as the parameters 
are determined by analytic Euler-Lagrange equations, and is even in $\Delta c := c-\frac12$. 

When $t=0$, the function is easy to compute. The graphon must be zero on $I_1 \times I_1$ and 
$I_2 \times I_2$ and take on the constant value $\frac{e}{2c(1-c)} = \frac{2e}{1-4\Delta c^2}$ on $I_1 \times I_2$. 
The entropy is then 
\begin{eqnarray}  S(e,0,c) &=& \frac12 (1-4 \Delta c^2) H \left ( \frac{2e}{1-4 \Delta c^2} \right ) \cr
& = & S(e,0,1/2) + 2 \ln(1-2e) \Delta c^2 + O(\Delta c^4).
\end{eqnarray}
That is, there is an entropy cost proportional to $\Delta c^2$ associated with having $\Delta c \ne 0$.

Now consider the effect of having $t$ nonzero. Having the graphon nonzero on $I_1 \times I_1$ and $I_2 \times I_2$ provides
additional entropy of order $t \ln (1/t)$. Shifting the value of the graphon on $I_1 \times I_2$ by
an $O(t)$ amount changes the entropy by an additional $O(t)$, but since this is small compared 
to $t \ln(1/t)$, $S(e,t,c)-S(e,0,c)$ is still $O(t \ln(1/t))$. 
In order to overcome the $-2 \ln(1-2e) \Delta c^2$ cost, 
we must have $\Delta c = O(\sqrt{t \ln(1/t)})$. Since $t \sim \exp(-2e^2 \beta)$, this means that $\Delta c$
must be exponentially small in $\beta$ and in particular that $\beta \Delta c$ is a small parameter.

We now compute the quantity $G(x,y)$ in each rectangle and look at the Euler-Lagrange equations for a particular value of $\beta$: 
\begin{eqnarray} 
H'(A) & = & \alpha + \frac{\beta}{2}(A^2 + D^2) - \beta \Delta c (D^2-A^2) \cr
& \approx & \alpha + \frac{\beta}{2} D^2 - \beta \Delta c D^2, \cr 
H'(B) & = & \alpha + \frac{\beta}{2}(B^2+D^2) + \beta \Delta c(D^2-B^2) \cr 
& \approx & \alpha + \frac{\beta}{2}D^2 + \beta \Delta c D^2, \cr 
H'(D) & = & \alpha + \frac{\beta D}{2} (A+B + 2 \Delta c (A-B)) \cr 
& \approx & \alpha, 
\end{eqnarray}
where in our approximations we use the fact that $A$ and $B$ are exponentially small in $\beta$. Since 
$D \approx 2e$, this makes $\alpha \approx H'(2e)$. The terms proportional to $\Delta c$ serve to 
multiply $A$ by a factor of $\exp(-4e^2\beta \Delta c) \approx 1 - 4e^2 \beta \Delta c$ and to multiply
$B$ by a factor of $\exp(4e^2 \beta \Delta c) \approx 1 + 4e^2 \beta \Delta c$. These changes in the values
of $A$ and $B$ (relative to their values when $\Delta c = 0$) slightly change the triangle density 
for a given value of $\beta$, but only by a fraction $O(\beta \Delta c^2)$. Likewise, the contribution 
to the entropy of the $I_1 \times I_1$ and $I_2 \times I_2$ squares changes by a fraction $O(\beta \Delta c^2)$. However, that entropy is only $O(t \ln(1/t))$, so we are dealing with an expression that is  
\be O(\beta t \ln(1/t) \Delta c^2) = O(t (\ln(1/t)^2) \Delta c^2), \ee
since $\beta = O(\ln(1/t))$. This possible entropy gain from having $\Delta c \ne 0$ is much smaller than
the $-2\ln(1-2e) \Delta c^2$ cost, so the optimal value of $\Delta c$ is exactly zero. That is, we 
must have $c=1/2$.  

When $c=1/2$, two of the Euler-Lagrange equations read: 
\begin{eqnarray}
H'(A) & = & \alpha + \beta(A^2 + D^2)/2, \cr 
H'(B) & = & \alpha + \beta(B^2 + D^2)/2.
\end{eqnarray} 
If $A>B$, then the right hand side of the first equation is greater than that of the second, so 
$H'(A) > H'(B)$. But that is a contradiction, since $H'(u) = \ln(1-u) - \ln(u) $ is a decreasing 
function of $u$. Likewise, we cannot have $A<B$. So $A$ and $B$ must be equal, making our optimal graphon 
symmetric bipodal. 

The parameters of a symmetric bipodal graphon are uniquely (and 
analytically) determined by $(e,t)$. 

We also consider how various quantities scale as $t \to 0$. After setting $c=1/2$ and 
$B=A$, a direct calculation shows that 
\be t = \frac{3}{4} AD^2 + \frac14 A^3, \ee
so 
\be A = \frac{4t}{3D^2} + O(t^3) = \frac{t}{3e^2} + O(t^2), \ee
where we have used the fact that $D=2e-A$. Since $A$ was exponentially small in $\beta$, $\beta$
must scale as $\ln(1/t)$. The entropy is 
\be \frac12 (H(A) + H(2e-A)) = \frac12 H(2e) -\frac12 A \ln(A) + O(A), \ee
so $S(g)-\frac12 H(2e)$ scales as $t \ln(1/t)$. 

The Boltzmann entropy $\B(e,t)$ is equal to the 
Shannon entropy $S(g)$ of the optimal graphon at $(e,t)$, so 
$\Delta \B := \B(e,t) - \B(e,0) = S(g)- \frac12 H(2e)$. 
Since $A \approx(t/3e^2)$ and $A \approx \exp(-2e^2\beta)$, 
$\beta \approx \frac{-1}{2e^2} \ln(t/3e^2) \sim \ln(1/t)$. This completes the proof of Theorem \ref{thm:flat2}.
\qed

Note that we have actually proved something slightly stronger than
Theorem \ref{thm:flat2}. We started with an optimal graphon, applied measure 
preserving transformations of $[0,1]$, and wound up with a symmetric bipodal 
graphon. That is, the optimal graphon in $\regular_{e,t}$ is unique up to {\em group}
equivalence. However, a reduced graphon in $\reduced_{e,t}$ is a {\em weak}
equivalence class, with all representatives of this class being constrained 
entropy maximizers. We thus conclude that 

\begin{corollary}\label{cor:flat2} For all $e<1/2$ and for all $t$ small enough that Theorem \ref{thm:flat2}
applies, any graphon $g \in \regular_{e,t}$ that is weakly equivalent 
to a symmetric bipodal graphon is group equivalent to a symmetric bipodal graphon. 
\end{corollary}

\section{Proof of Theorem \ref{thm:scallop}}

As in the last section, we will prove a slightly extended version 
of Theorem \ref{thm:scallop}: 

\begin{theorem} \label{thm:scallop2}
Let $n \ge 1$ be an integer. For every $e \in \left ( \frac{n}{n+1}, \frac{n+1}{n+2}\right )$, with 
corresponding minimal triangle density $t_0$ (depending on $e$), 
and for all
$\Delta t$ sufficiently small, the optimal graphon with edge/triangle densities $(e, t_0 + \Delta t)$
is unique and $n+2$-podal, with $(n,2)$ symmetry. Asymptotically, 
$\Delta \B = \B(e,t) - 
\B(e,t_0)$ scales as $\sqrt{\Delta t}$.
and the Lagrange multiplier 
$\beta$ scales as $1/\sqrt{\Delta t}$.
In the optimal graphon, the diagonal entries are all $\exp(-\Theta(\beta))$ and, except for the 
$(n+1, n+2)$ and $(n+2,n+1)$ entries, the off-diagonal entries are all $1 - \exp(-\Theta(\beta))$.
\end{theorem}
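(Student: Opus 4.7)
The strategy mirrors the proof of Theorem \ref{thm:flat2}, replacing the $(e,0)$ extremal by the unique Pikhurko-Razborov triangle-minimizer $g_0$ at $(e, t_0)$. By that classification, $g_0$ is exactly $(n+2)$-podal with $(n,2)$ symmetry: $n$ major podes of common width $a_0$ and two minor podes of common width $b_0$, with $na_0 + 2 b_0 = 1$; $g_0 = 0$ on every diagonal rectangle, $g_0 = 1$ on every off-diagonal rectangle except the $(n{+}1, n{+}2)$ and $(n{+}2, n{+}1)$ rectangles, where $g_0$ takes some value $d^\ast \in (0,1)$ that depends analytically on $e$. As in Section \ref{sec:flat}, $L^2$-continuity of $S$, $\edens$, and $\tdens$ forces every sequence of entropy-maximizers at $(e, t_0 + \Delta t)$ to converge in $L^2$ to $g_0$ as $\Delta t \to 0$, up to measure-preserving rearrangement, since any subsequential limit would produce a triangle-minimizer at $(e,t_0)$ distinct from $g_0$.

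Fix $\epsilon$ small and, for an optimal $g$ at $(e, t_0 + \Delta t)$, partition $[0,1]$ into sets $I_1, \ldots, I_{n+2}$ and an exceptional set $I_{\mathrm{exc}}$ of measure at most $\epsilon$, with $g_x$ being $L^2$-close to the $i$-th column type of $g_0$ for $x \in I_i$. On every rectangle avoiding $I_{\mathrm{exc}}$, this controls $G(x,y) = \langle g_x | g_y \rangle$ to within $O(\sqrt{\epsilon})$ of its value in $g_0$, and the pointwise Euler-Lagrange equation $H'(g(x,y)) = \alpha + \beta G(x,y)$, in the $\beta \to \infty$ regime, forces the diagonal entries to be $\exp(-\Theta(\beta))$ and the off-diagonal entries (outside the $(n{+}1, n{+}2)$ rectangles) to be $1 - \exp(-\Theta(\beta))$; the $(n{+}1, n{+}2)$ entry remains close to $d^\ast$ by continuity.

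To absorb $I_{\mathrm{exc}}$, apply the \Value{} theorem of Section \ref{sec:Lagrange}. Using concavity of $H$, bound $W(C)$ by its piecewise-constant surrogate in the pode averages $(c_1, \ldots, c_{n+2})$, modulo $O(\sqrt{\epsilon})$ errors from the rectangles that touch $I_{\mathrm{exc}}$. The critical-point equations of this finite-dimensional surrogate admit, in the large-$\beta$ limit, exactly $n+2$ local maxima falling into two $(n,2)$-symmetric orbits, each having one $c_i$ close to zero and the rest close to one, with the special $(n{+}1, n{+}2)$ coupling mediated by $d^\ast$. Since every column $g_x$ must realize one of these maxima, reassign each $x \in I_{\mathrm{exc}}$ to the matching $I_i$. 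Then the fluctuation-bound argument of Theorem \ref{thm:flat2} promotes approximate multipodality to exactness: the pointwise Euler-Lagrange equations express the oscillation $\Delta_{ij}$ of $g$ on $I_i \times I_j$ as a bounded linear combination of the neighboring oscillations, with total coefficient vanishing like $\beta^2 \exp(-\Theta(\beta))$ as $\Delta t \to 0$, forcing every $\Delta_{ij} = 0$.

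It remains to optimize over the finite-dimensional space of exact $(n+2)$-podal graphons. Forcing $(n,2)$ symmetry is the main technical obstacle: one must show that any asymmetry among the first $n$ podes, or between the last two, incurs a positive-definite quadratic entropy cost in the symmetry-breaking parameters that dominates any gain obtainable by re-adjusting the rectangle values at fixed $(e,t)$, exactly as in the $c = 1/2$ calculation of Section \ref{sec:flat} but now over a larger and more coupled parameter space. Once $(n,2)$ symmetry is established, expand the remaining smooth finite-dimensional optimization around $g_0$. Because $g_0$ is the triangle-minimizer in this reduced family, the leading change in $\tdens$ for any perturbation $\delta$ consistent with the $\edens$-constraint is quadratic, $\Delta t \sim \delta^2$, while the leading change in $S$ is linear, $\Delta S \sim \delta$, since $g_0$ is not a critical point of $S$ alone and the $\exp(-\Theta(\beta))$-size deviations of the saturated rectangles contribute only $O(\beta \exp(-\Theta(\beta)))$ to $S$. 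Hence $\delta \sim \sqrt{\Delta t}$, giving $\Delta \B \sim \sqrt{\Delta t}$, and the Lagrange relation $\partial \B/\partial t = \beta/3$ yields $\beta \sim 1/\sqrt{\Delta t}$, completing the proof.
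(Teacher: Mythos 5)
Your roadmap does coincide with the paper's for most of the argument ($L^2$-proximity to the Pikhurko--Razborov minimizer, approximate podes plus a small exceptional set, pointwise Euler--Lagrange bounds pushing entries exponentially close to $0$ or $1$, \Value-maximization to reassign the exceptional set, the fluctuation bootstrap giving exact multipodality, and the $dt/dc=0$ versus $dS/dc\neq 0$ mechanism for the $\sqrt{\Delta t}$ scaling). However, the step you yourself call the main technical obstacle --- forcing $(n,2)$ symmetry --- is handled by assertion, and the asserted mechanism is not correct in this setting. You claim that asymmetry among the first $n$ podes, or between the last two, ``incurs a positive-definite quadratic entropy cost \ldots exactly as in the $c=1/2$ calculation of Section \ref{sec:flat}.'' In the flat case that cost is an order-one entropy penalty already present at $t=0$ (the coefficient $2\ln(1-2e)$). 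On a scallop there is no such intrinsic penalty: perturbing the sizes of the first $n$ podes leaves both $S$ and $\tdens$ unchanged to first order, and to second order it can even \emph{increase} the entropy (the sign depends on $H'(p)$); what actually rules it out is that it also increases the triangle density at second order, and at the prevailing exchange rate $\beta\sim 1/\sqrt{\Delta t}\to\infty$ compensating that increase costs far more entropy than can be gained. So the domination argument must be run against the $\beta$-weighted triangle term, not against an entropy cost of the flat-case type, and you supply no calculation establishing it. The paper in fact proves \emph{exact} symmetry by a different device: equating the \Value{}s $W_i=W_j$ of columns in different podes forces $c_i-c_j$ to be exponentially smaller than the largest value fluctuation, pairwise comparison of the Euler--Lagrange equations in turn bounds the within-orbit value differences by tiny multiples of the $\Delta c$'s, and the two bounds bootstrap to give $\Delta c=\Delta g=0$ exactly.

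Two smaller points. Your classification of \Value-maximizers is also asserted rather than derived: the paper obtains it by observing that the Hessian of $W$ is $-\beta M$ plus the diagonal terms $H''(a_i)$, that $M$ has eigenvalues of both signs, so negative semidefiniteness pegs all but one coordinate near $0$ or $1$, followed by a case analysis; moreover your description ``one $c_i$ close to zero and the rest close to one'' misstates the maximizers corresponding to the last two podes, whose $(n{+}1,n{+}2)$-entry sits near $d^\ast=p$, not near $1$. Finally, uniqueness of the optimizer and the analyticity of its parameters in $(e,t)$ are not addressed at all; the paper extracts these from the finite-dimensional $(n,2)$-symmetric reduction via an implicit-function-theorem argument, and some such step is needed to finish the statement as claimed.
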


\begin{proof} 
The proof of Theorem \ref{thm:scallop2} (and therefore
Theorem \ref{thm:scallop}) follows the same script 
as the proof of Theorem \ref{thm:flat2}, namely 
\begin{enumerate}
\item Using the proximity of an entropy-maximizing graphon $g$ at 
$(e,t)$ to the unique entropy-maximizing graphon $g_0$ at $(e,t_0)$ 
to define 
approximate podes $I_1, \ldots, I_{n+3}$ where the columns with 
$x\in I_j$ with $j \le n+2$ 
are $L^2$-close to the corresponding columns of $g_0$, and where
the exceptional set $I_{n+3}$ is small. 
\item Using the Euler-Lagrange equations to show that all of the graphon values are exponentially 
close to 0 or 1, except on $I_{n+1} \times I_{n+2}$, $I_{n+2} \times I_{n+1}$, or when one of the
coordinates is in $I_{n+3}$. 
\item Showing that the only possible \Value-maximizing columns are small 
perturbations of the columns of $g_0$, thus allowing us to reassign the 
points of $I_{n+3}$ to the other podes. 
\item Bounding the variation in $g(x,y)$ in each rectangle $I_i \times I_j$ by a small multiple of the variation
in other rectangles. Combining estimates, this shows that the variation in each rectangle is bounded by 
a small multiple of itself, and must therefore be zero. 
\item Analyzing the finite-dimensional space of $(n+2)$-podal graphons near $g_0$ and determining that the
best one has $(n,2)$ symmetry. We then determine 
how $S$, $\beta$, and various entries of the optimal graphon scale with $\Delta t$. 
\item On the first pass, steps (1--5) only apply at values of $t$ for which the Lagrange 
multipliers $\alpha$ and $\beta$ are well-defined and finite. Extending the results to all sufficiently
small values of $\Delta t$ then follows from Theorem \ref{thm:no-singular}.
\end{enumerate}

There is one important difference between the situation of Theorem \ref{thm:flat2} and that of 
Theorem \ref{thm:scallop2}. The
additional podes that appear on the scallops provide an additional, and more efficient, means of generating
entropy at the expense of added triangles. As a result, $\Delta \B$ scales as $\sqrt{\Delta t}$
rather than $\Delta t \ln(1/\Delta t)$. Before getting into the details of the proof, we explain how
this works, starting near the first scallop, 
with $e \in (\frac12, \frac23)$. 

Consider tripodal graphons of the form shown in Figure \ref{fig:Tri01p}. The total edge density is 
\be e = 2c(1-c) + \frac12 p(1-c)^2, \ee
so we must have 
\be p = \frac{2(e-2c(1-c))}{(1-c)^2}. \label{eq:p} \ee
The triangle density is 
\be t = \frac32 pc(1-c)^2 = 3c(e-2c(1-c)) = 3ec-6c^2 + 6c^3. \ee
Taking derivatives, we see that 
\be \frac{dt}{dc} = 3(6c^2 - 4c+e) \hbox{ and } \frac{d^2t}{dc^2} = 36c-12. \ee
The first derivative is zero when 
\be c = \frac13 \left ( 1 + \sqrt{1 - \frac{3e}{2}} \right ). \ee
Since $d^2 t/dc^2$ is always positive, this gives the minimum triangle density among graphons of 
this kind. 
In fact, it minimizes $t$ among all possible graphons \cite{PR} and
is the unique optimal graphon with densities $(e, t_0)$ \cite{RS1}. 

\begin{figure}[ht]
\includegraphics[width=3in]{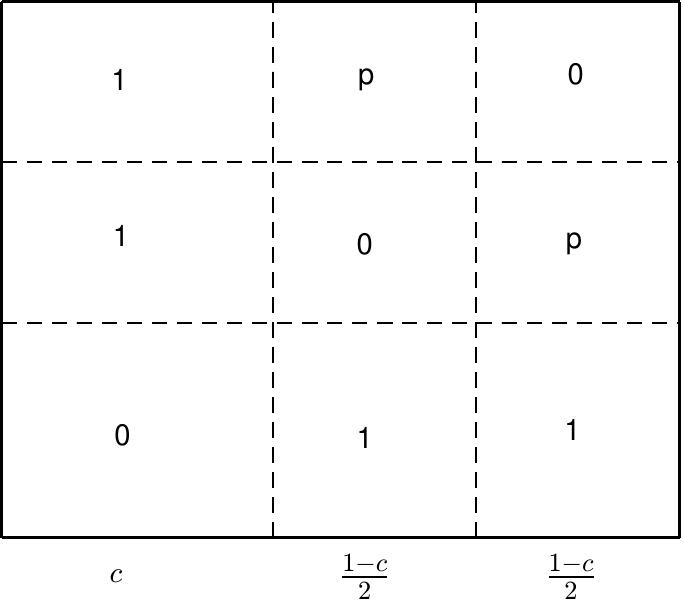}
\caption{A tripodal graphon of the form seen on the first scallop}\label{fig:Tri01p}
\end{figure} 

Now imagine varying $c$ and $p$ while preserving the structure of Figure \ref{fig:Tri01p}. 
The entropy of the graphon displayed in Figure \ref{fig:Tri01p} is
\be S = \frac12 (1-c)^2 H(p), \ee
where $p$ is given by equation (\ref{eq:p}). A little algebra then gives 
\be p = 4 - 4(1-c)^{-1} + 2e(1-c)^{-2}, \ee
so 
\be \frac{dp}{dc} = 4e(1-c)^{-3} - 4(1-c)^{-2} = \frac{4(e-(1-c))}{(1-c)^3}. \ee
We then compute 
\begin{eqnarray}
\frac{dS}{dc} & = & -(1-c)H(p) + \frac12 (1-c)^2 H'(p) \frac{dp}{dc} \cr 
& = & -(1-c)H(p) + \frac{2H'(p)(e+c-1)}{1-c} \cr 
& = & -(1-c) H(p) + \frac{H'(p)}{1-c}  (p(1-c)^2 + 6c-4c^2-2) \cr 
& = & (1-c)(pH'(p)-H(p)) + (4c-2) H'(p) \cr 
& = & (1-c) \ln(1-p) + 4c-2) (\ln(1-p)-\ln(p)) \cr 
& = & (3c-1) \ln(1-p) + 2(1-2c) \ln(p). 
\end{eqnarray}
Since $c$ is between $\frac12$ and $\frac23$, the coefficients of $\ln(1-p)$ and $\ln(p)$ are 
both positive, making $\frac{dS}{dc}$ negative.
We can increase the entropy to first order by decreasing $c$. That only increases the 
triangle count to second order in $\Delta c$, so we have achieved an entropy increase that scales 
as the square root of $\Delta t$.

The situation is similar near the other scallops. There is a family of graphons parametrized by the 
size $c$ of each of the $n$ identical podes, as in Figure \ref{fig:Multi01p} for $n=3$. 
There is a value $c_0$ that minimizes the triangle density, 
but $dS/dc$ is not zero at $c=c_0$. Instead, the calculation shown in the next paragraph shows that 
$dS/dc$ is negative for all relevant values of $c$. 
As a result, we can increase $S$ to first order in $\Delta c$ by decreasing $c$ 
while only increasing $t$ to second order, so $\Delta S \sim \sqrt{\Delta t}$. 

\begin{figure}[ht]
\includegraphics[width=4in]{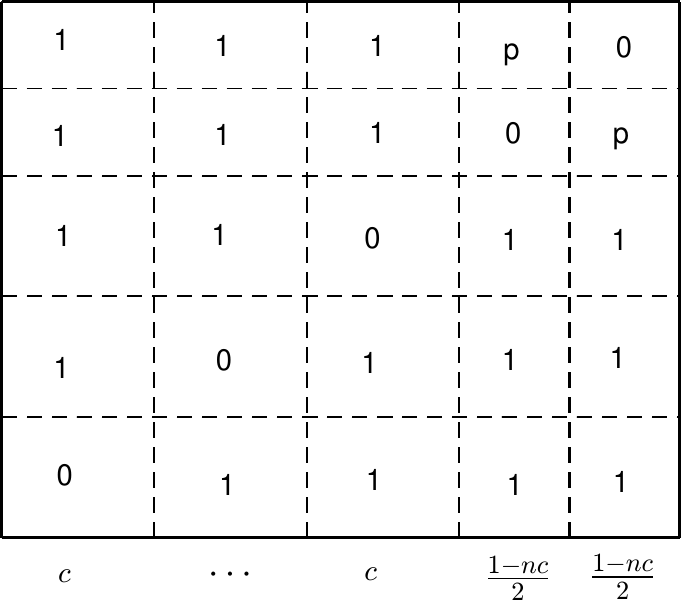}
\caption{A multipodal graphon of the form seen on the scallops, in this case with $n=3$}\label{fig:Multi01p}
\end{figure}

For a general value of $n$, the graphon on the $n$-th scallop is 1 everywhere except on the diagonal 
blocks and on the
two off-diagonal blocks in the upper right corner. The edge density is 
\be e  =  1- nc^2 - (1-nc)^2 + \frac{(1-nc)^2}{2} p. \ee 
This means that 
\begin{eqnarray}
p & = & \frac{2}{(1-nc)^2} \left ( e + nc^2 - 1 + (1-nc)^2 \right ) \cr 
& = & 2 \left ( e - \frac{n-1}{n} \right ) (1-nc)^{-2} - \frac4n (1-nc)^{-1} + \frac{2(n+1)}{n}.
\end{eqnarray}
Taking a derivative with respect to $c$ is then easy: 
\be \frac{dp}{dc} = \frac{4n}{(1-nc)^3} (e +c -1). \ee
The entropy is 
$S = \frac{(1-nc)^2}{2}H(p)$ and derivative of $S$ with respect to $c$ is  
\begin{eqnarray}
\frac{dS}{dc} & = & -n(1-nc)H(p) + \frac12 (1-nc)^2 H'(p) \frac{dp}{dc} \cr 
& = & -n(1-nc)H(p) + \frac{2nH'(p)(e+c-1)}{1-nc} \cr 
& = & -n(1-nc) H(p) + nH'(p)((n+1)c - 1 + p)  \cr 
& = & n(1-nc)(pH'(p)-H(p)) + 2n((n+1)c-1) H'(p) \cr 
& = & -n(1-nc) \ln(1-p) + 2n((n+1)c-1) (\ln(1-p)-\ln(p)) \cr 
& = & n((n+2)c-1) \ln(1-p) + 2n(1-(n+1)c) \ln(p). 
\end{eqnarray}
Since $c$ is between $\frac{1}{n+2}$ and $\frac{1}{n+1}$, the coefficients of $\ln(p)$ and 
$\ln(1-p)$ are both positive, making each term negative, so $\frac{ds}{dc} < 0$, as claimed.

\subsection{Defining approximate podes}
We now turn to the details of the proof. 
As usual, let $g_0$ be the unique entropy maximizer $g_0$ at $(e,t_0)$. 
This graphon takes the form shown in Figure \ref{fig:Multi01p}, with 
\be c = c_0 = \frac{1 + \sqrt{1- \frac{n+2}{n+1}e}}{n+2}, \ee
which is the value of $c$ that minimizes 
\be t = n(n+1)(n+2)c^3 - 3n(n+1)c^2 + 3nec. \ee
As we
approach the scallop, any sequence of entropy maximizers must converge (after applying measure preserving
transformations of $[0,1]$) to $g_0$ in $L^2$ by exactly the same
argument as in the proof of Theorem \ref{thm:flat2}. 

As before, any sequence of entropy maximizing reduced graphons must converge to $[g_0]$ in the cut metric, implying 
than any sequence of entropy maximizing graphons must are equivalent to graphons $g_i$ that converge to $g_0$ in the cut distance. 
By the definition of the cut distance, the average value of $g_i$ on each rectangle defined by the podes of $g_0$ must 
approach the (constant) value of $g_0$ on that rectangle. Since $\lim S(g_i)=S(g_0)$, the variance of $g_i$ must go to zero on 
each of these rectangles, so the graphons $g_i$ converge to $g_0$ in $L^2$. 

We pick a sufficiently small value of $\epsilon$
and consider values of $t$ small enough that $\|g-g_0\|_{L^2} < \epsilon$ for each optimal graphon $g$.
Let $I_1$, $I_2$, $\ldots$, $I_{n+2}$ be the subsets of the podes of $g_0$ for which $g_x$ lies
within $\sqrt{\epsilon}$ in $L^2$ of the corresponding column of $g_0$ and let 
$I_{n+3}$ be the exceptional set where $g_x$ is not close to the corresponding column of $g_0$. 
Note that $I_{n+3}$ may contain points $x$ where
$g_x$ is close to a different column of $g_0$. Those points will soon be reassigned.

\subsection{Variational equations} Next we need to compute $G(x,y)$ in different cases. Let 
$G_{i,j}$ denote a typical value of $G(x,y)$ when $x \in I_i$ and $y \in I_j$. 
We can estimate these quantities to within $O(\sqrt{\epsilon})$ using the 
columns of $g_0$.
Thanks to our 
$(n,2)$ symmetry, there are only five 
different numbers to compute, namely $G_{1,1}$, $G_{1,2}$, $G_{1,{n+1}}$, $G_{n+1,n+1}$ and 
$G_{n+1,n+2}$. The results are
\begin{eqnarray}
G_{1,1} & \approx & 1-c, \cr 
G_{1,2} & \approx & 1-2c, \cr 
G_{1,n+1} & \approx & (n-1)c + \frac{1-nc}{2} p, \cr 
G_{n+1,n+1} & \approx & nc + \frac{1-nc}{2} p^2, \cr 
G_{n+1,n+2} & \approx & nc,
\end{eqnarray}
where ``$\approx$'' means ``equal to within $O(\sqrt{\epsilon})$''. 
Note that $G_{1,1}$ and $G_{n+1, n+1}$ are greater than $G_{n+1, n+2}$ by amounts that are 
$\Omega(1)$ as $t \to 0$ while $G_{1,2}$, and $G_{1,n+1}$ are less than $G_{n+1,n+2}$ by amounts 
that are $\Omega(1)$. Multiplying by $\beta$ and adding $\alpha$, and using the fact that 
\be H'(p) = \alpha + \beta G_{n+1, n+2} = (\alpha + nc) + O(\beta \sqrt{\alpha}). \ee
we get that $H'(g(x,y))$ is $\Omega(\beta)$ on the diagonal rectangles that do not involve $I_{n+3}$
and is $-\Omega(\beta)$ on the off-diagonal blocks that do not involve $I_{n+3}$, with the exception
of $I_{n+1}\times I_{n+2}$ and $I_{n+2}\times I_{n+1}$. This implies that $g(x,y)$ is exponentially
small (that is, $\exp(-\Omega(\beta))$) on the diagonal blocks, exponentially close to 1 on all but two of the off-diagonal blocks, 
and of course is close to $p$ on $I_{n+1} \times I_{n_2}$ and $I_{n+2}\times I_{n+1}$.

\subsection{Maximizing \Value}

Let $C: [0,1] \to [0,1]$ be a function whose \Value{} we aim to 
maximize. For each $i=1,2,\ldots, n+2$,
let $a_{n+2}$ be the average values of $C(y)$ for $y \in I_i$. Of the terms contributing to 
$W(C)$, the entropy term is bounded above by $2c \sum_{i=1}^n H(a_i) + (1-nc) (H(a_{n+1}) 
+ H(a_{n+2}))$, since fluctuations in $C$ within each pode can only decrease the entropy. 
The edge density term is $-\alpha (2c \sum_{i=1}^n a_i + (1-nc) (a_{n+1}+a_{n+2}))$. 

The most important term comes from triangles. To within the accuracy of our approximation
that $g(y,z)$ is constant on each rectangle, it is the quadratic function 
\be - \beta \sum_{i,j=1}^{n+2} M_{ij} a_i a_j, \ee
where $M_{ij}$ is the integral of $g(y,z)$ over $I_{i} \times I_j$.

If we are at a maximum of $W$, then the gradient of $W$ must be zero and the Hessian must be 
negative semi-definite. The Hessian of $W$ with respect to the variables $\{a_i\}$ 
is precisely $-2\beta M$ plus diagonal 
terms proportional to $H''(a_i)$.
The matrix $M$ is (nearly) zero on the diagonal, with all of the off-diagonal terms being close to 1
or $p$, and so has eigenvalues of both signs. The only way for the Hessian to be negative semi-definite 
is for all but one of entries $H''(a_i)$ to be at least of order $\beta$. In other words, all columns 
that maximize $W$ must have every entry but one (or every entry) 
approximately equal to 0 or 1. In terms 
of $G$, for $y$ in any pode but one, $|G(x,y) -nc|$ must exceed 
$\Theta(1/\beta)$. 

We now examine the possibilities. 
\begin{itemize}
\item If $G(x,y) \approx nc$ for $y\in I_1$, then $2c\sum_{i=2}^n a_i + (1-nc) (a_{n+1}+a+{n+2})
\approx 2nc$. But that is impossible if each $a_i$ (other than $a_1$) is equal to 0 or 1. 
Contradiction. Likewise, it is not possible to have $G(x,y) \approx nc$ for $y \in I_2, \ldots, I_n$.
The first $n$ variables $a_i$ are all either pegged to 0 or to 1. 
\item If two or more of the variables $a_1, \ldots, a_n$ are pegged to 0, then $G(x,y) < nc$ for 
all $y$, so $g(x,y) \approx 1$ for all $y$, which is a contradiction. Thus either one or none 
of the first $n$ $a_i$'s is pegged to 0 and the rest are pegged to 1. 
\item If exactly one of these variables is pegged to 0, then for $y\in I_{n+1}$ or $y\in I_{n+2}$ 
we have $G(x,y) \le p \frac{1-c}{2} + (n-1) c < nc$, so $a_{n+1}$ and $a_{n+2}$ are pegged to 1. 
In other words, our column is just like the columns when $x$ is in one of the first $n$ podes.  
\item If all of the variables $a_1, \ldots, a_n$ are pegged to 1, then we examine $a_{n+1}$ and 
$a_{n+2}$. Neither one is pegged to 1, since $G(x,y)$ is at least $nc$ for $y$ in either $I_{n+1}$ or 
$I_{n+2}$. They cannot both be pegged to 0, since that would make $G(x,y)=nc$ in both podes, meaning
that the values are {\em not} pegged and the Hessian is not negative-definite. 
Thus one value must be pegged to 0 while the other is intermediate between 0 and 1. 
The closeness of all but one $a_i$ to 0 or 1 gives us the same equation
for the remaining $a_i$ as satisfied by the actual columns of $I_{n+1}$
or $I_{n+1}$, implying that the final $a_i$ must be close to $p$. That
is, $C$ is close to the actual columns when $x \in I_{n+1}$ or $I_{n+2}$.
\end{itemize}

The upshot is that all \Value-maximizers are already $L^2$-close 
to columns
of $g_0$. Since each column is a \Value-maximizer, we can reassign 
all of the points of $I_{n+3}$ to other podes. Note that this reassignment
can result in the podes of $g$ having sizes that are slightly 
different from those of the
corresponding podes of $g_0$. 

Controlling the columns to within a small $L^2$ error gives us pointwise
control over $G(x,y) = \langle g_x | g_y \rangle$. 
On all rectangles except for $I_{n+1}\times 
I_{n+2}$ and $I_{n+2} \times I_{n+1}$, this forces $g(x,y)$ to be 
exponentially close to 0 or 1. This makes $G(x,y)$ exponentially close
to constant on $I_{n+1} \times I_{n+2}$ and $I_{n+2} \times I_{n+1}$
and so makes $g(x,y)$ exponentially close to a constant (that is 
close to $p$, but not necessarily exponentially close) on these
rectangles.

\subsection{Exact multipodality}

So far we have shown that an optimal graphon has to be approximately multipodal. There 
are $n$ podes $I_1, \ldots, I_n$ of width close to 
\be c = \frac{1 + \sqrt{1 - \frac{n+2}{n+1} e}}{n+2} \ee
and two podes of width close to $\frac{1-nc}{2}$. The graphon is exponentially close to 0 on the diagonal
blocks, exponentially close to 1 on all of the off-diagonal blocks but two, and close to $p$ on 
$I_{n+1}\times I_{n+2}$ and $I_{n+2}\times I_{n+1}$. We next show that the graphon is exactly constant
on each rectangle. The proof is essentially a rerun of the analogous step in the proof of Theorem
\ref{thm:flat2}, only with more terms. 

Let $g_{ij}$ be the average value of the graphon on the rectangle $I_i \times I_j$ and let 
$\Delta g_{ij}$ be the difference between the greatest and lowest value of the graphon in that rectangle. Let $c_i$ be 
the width of $I_i$. We have already determined that all $g_{ij}$'s except for $g_{n+1,n+2}$ and 
$g_{n+2,n+1}$ are exponentially close (in $\beta$) to 0 or 1, and hence that $1/H''(g_{ij})$ is 
exponentially small. 

If $x \in I_i$ and $y \in I_j$, then the maximum and minimum possible values of $G(x,y)$, and their
difference, are 
\begin{eqnarray}
\hbox{max} & = & \sum_{k=1}^{n+2} c_k (g_{ik}+\Delta g_{ik})(g_{jk} + \Delta g_{jk}), \cr 
\hbox{min} & = & \sum_{k=1}^{n+2} c_k (g_{ik}-\Delta g_{ik})(g_{jk} - \Delta g_{jk}), \cr 
\hbox{difference} & = & \sum_{k=1}^{n+2} 2c_k (g_{ik} \Delta g_{jk} + g_{jk} \Delta g_{ik}).
\end{eqnarray} 
Applying the mean value theorem to the Euler-Lagrange equations, 
and noting that $H''(u)$ is always negative, we have 
\be -H''(g_{ij,0}) \Delta g_{ij} \le 2 \beta \sum_{k=1}^{n+2} c_k (g_{ik} \Delta g_{jk} + g_{jk} \Delta g_{ik}), \ee
where $g_{ij,0}$ is some number between the values of $g$ corresponding to the maximum and minimum possible
values of $G(x,y)$. 

The sum on the right contains terms proportional to $\Delta g_{ij}$ itself, coming from $k=i$ or $k=j$. We bring those terms to the left hand side, noting that coefficients of those terms are much smaller than $H''(g_{ij,0})$.
When $\{i,j\} \ne \{n+1,n+2\}$, this is because $g_{ij,0}$ is exponentially close to 0 or 1, so $H''(g_{ij,0})$ is exponentially large, while the coefficients on the right hand side are $O(\beta)$. 
When $\{i,j\} = \{n+1,n+2\}$, this 
is because the coefficients of $\Delta g_{n+1,n+2}$ on the right hand side are proportional to 
$\beta g_{n+1,n+1}$ or $\beta g_{n+2,n+2}$, both of which are exponentially small. By changing the 
factor of 2 on the right hand side to a 3, we can absorb these small corrections to the coefficient of 
$\Delta g_{ij}$ and also replace $H''(g_{ij,0})$ with just $H''(g_{ij})$. We also bound $g_{ik}$ and 
$g_{jk}$ by 1. The upshot is that 
\begin{eqnarray}
\Delta g_{ij} & \le & \frac{3\beta}{-H''(g_{ij})} \sum_k c_k (\Delta g_{jk} + \Delta g_{ik}) \cr 
& \le & \frac{6\beta}{-H''(g_{ij})} \max(\Delta g_{ik} \hbox{ or } \Delta g_{jk}),
\end{eqnarray}
where the sum on the first line and the maximum on the second line skips terms involving $\Delta g_{ij}$
itself. 

Whenever $\{i,j\} \ne \{n+1,n+2\}$, $\frac{6\beta}{-H''(g_{ij})}$ is exponentially small, so $\Delta g_{ij}$
is bounded by a tiny multiple of a sum of similar errors. In particular, the largest $\Delta g_{ij}$ of 
this sort is bounded by a sum of contributions much smaller than itself, possibly plus a contribution
from $\Delta g_{n+1, n+2}$. The conclusion is that all $\Delta g_{ij}$'s other than $\Delta g_{n+1,n+2}$ 
are bounded by $\beta \exp(-\Omega(\beta)) \Delta g_{n+1,n+2}$. 

Now consider the equation for $\Delta g_{n+1,n+2}$. This equation 
indicates that $\Delta g_{n+1,n+2}$ is bounded by an $O(1)$ multiple of $\beta$ times the largest of 
the remaining $\Delta g_{ij}$'s, and so is bounded by a constant times 
$\beta^2 \exp(-\Omega(\beta)) \Delta g_{n+1,n+2}$. When $\beta$ is large, $\Delta_{n+1,n+2}$ is thus 
bounded by a constant
(less than one) times itself, and so must be zero. But then all of the other $\Delta g_{ij}$'s must also
be zero, so our graphon is multipodal.

\subsection{Graphons with $(n,2)$ symmetry}

Finally, we show that that the optimal graphon is symmetric in the first $n$ podes and symmetric in the 
last two. Let $c_1, \ldots, c_{n+2}$ be the sizes of the various podes, let $\bar c$ be the average 
size of the first $n$ podes, and let $\Delta c_i$ be $c_i-\bar c$ or $c_i - \frac{1-n\bar c}{2}$, depending
on whether we are talking about the first $n$ podes or the last two. Let 
$W_i$ be the \Value{} of columns in the $i$-th pode. There are five kinds of rectangles, namely 
$I_i \times I_j$ with $i=j \le n$, with $i < j \le n$ or $j<i \le n$, with $i \le n < j$ or 
$j \le n < i$, with $i=j >n$, and finally with $\{i,j\} = \{n+1,n+2\}$. In each class, let $\bar g_{ij}$
be the average value of the graphon and let $\Delta g_{ij} = g_{ij} - \bar g_{ij}$. We also refer to 
$g_{n+1,n+2}$ as $p$.

A key fact is that all of the entries in the first $n$ columns are exponentially close to 0 or 1. Meanwhile,
the Euler-Lagrange equations for $g_{n+1,n+2}$ say that 
\begin{eqnarray} 
H'(p) & \approx & \alpha + \beta \sum_{i=1}^n c_i,  \cr 
\alpha & \approx  & H'(p) - \beta \sum_{i=1}^n c_i,
\end{eqnarray}
where ``$\approx$'' means ``equal up to exponentially small corrections''. 

Now suppose that $i$ and $j$ are indices less than or equal to $n$. Since all of the entries $g_{ik}$ 
and $g_{jk}$ are exponentially close to 0 or 1, the entropy contribution to $W_i$ or $W_j$ is 
exponentially small. The coefficient of $\alpha$ is $\sum_{k \ne i} c_k = 1-c_i$, while the coefficient
of $\beta/2$ is the integral of the graphon over everything that doesn't involve the $i$-th pode. The
upshot is that 
\begin{eqnarray} 
W_i & \approx & -\alpha(1-c_i) - \frac{\beta}{2} (e - 2 c_i(1-c_i)) \cr 
& = & - \left ( \alpha + \frac{\beta e}{2} \right ) + c_i(\alpha + \beta) - c_i^2 \beta,
\end{eqnarray}
with a similar result for $W_j$. Taking the difference gives 
\begin{eqnarray} 
0 & = & W_i - W_j \cr 
& \approx & (c_i-c_j) (\alpha + \beta - \beta(c_i+c_j)) \cr 
& \approx & (c_i-c_j) (H'(p) + \beta(c_{n+1}+c_{n+2}-c_i-c_j)),
\end{eqnarray}
where we have used the fact that $\sum_{i=1}^n c_i = 1-c_{n+1}-c_{n+2}$. 
However, $c_{n+1}$ and $c_{n+2}$ are close to $\frac{1-n\bar c}{2}$, while $c_i$ and $c_j$ are close
to $\bar c$, so the coefficient of $\beta$ is bounded away from zero. We conclude 
that $c_i-c_j$ must be exponentially small. More precisely, $c_i - c_j$ must be exponentially smaller than
the largest $|\Delta g_{ik}|$ or $|\Delta g_{jk}|$. A similar argument shows that $c_{n+1}-c_{n+2}$ is
also exponentially smaller than the largest $|\Delta g|$.

We now look at the Euler-Lagrange equations for $g_{i\ell}$ and $g_{j\ell}$, 
where $\ell$ is different from $i$ or $j$. The difference between $G(x,y)$ in $I_i \times I_\ell$ and 
$I_j \times I_\ell$ is 
\begin{eqnarray} 
\sum_{k=1}^{n+2} c_k g_{\ell k} (g_{ik}-g_{jk}) & =&    \sum_{k=1}^{n+2} c_k g_{\ell k} (\Delta g_{ik}-\Delta g_{jk}) \cr 
&& + (c_i g_{\ell i} - c_j g_{\ell j})(\bar g_{ii} - \bar g_{ij}).
\end{eqnarray} 
The first line is of the order of the largest $\Delta g$. The second line has a similar contribution
from the difference of $\Delta g_{\ell i}$ and $\Delta g_{\ell j}$, plus a contribution of order 
$c_i - c_j$. But then 
\be H'(g_{i\ell}) - H'(g_{j\ell}) = \beta (G_{i\ell} - G_{j\ell}),\ee
which is $\beta$ times a linear combination of $\Delta g$'s and $c_i-c_j$. Since $H''$ is enormous on
the interval from $g_{i\ell}$ to $g_{j\ell}$ (both of which are exponentially close to 1), 
$\Delta g_{i\ell}-\Delta g_{j\ell}$ is bounded by a tiny combination of other $\Delta g$'s and $\Delta c$'s. 

Repeating this argument for $g_{ii} - g_{jj}$ and for $g_{n+1,n+1}-g_{n+2,n+2}$, we get that 
\begin{itemize}
\item The biggest $\Delta c_i$ is bounded by a tiny constant times the biggest $\Delta g$. 
\item The biggest $\Delta g$ is bounded by a tiny constant times the biggest $\Delta c$. 
\end{itemize}
We conclude that all of the $\Delta c$'s and $\Delta g$'s are zero. 

We have determined the form of the optimal graphon $g$ at $(e,t)$. 
Noting that the Boltzmann entropy $\B(e,t)$ equals the Shannon entropy 
$S(g)$ of the optimal graphon $g$, all statements about $S$ are easily converted into statements 
about $\B$. 

Finally, we must show that the values of $g$ on each rectangle, and 
the sizes of the different podes, are analytic functions of $(e,t)$. 
This follows from a general principle in algebraic geometry, which in
turn is essentially just the implicit function theorem. Within the
product of the Razborov triangle and the finite-dimensional space 
of graphons with $(n,2)$ symmetry, the set of optimal graphons is a 
2-dimensional analytic variety, cut out by the analytic Euler-Lagrange
equations. As long as the 
tangent space does not degenerate, we can write all but two of the 
variables as analytic functions of the last two, which we can
choose to be $(e,t)$. 

\end{proof}

As with Theorem \ref{thm:flat2}, we have actually proven that an optimal graphon
in $\regular_{e,t}$ has a unique form up to group equivalence, implying

\begin{corollary} \label{cor:scallop2} For all $(e,t)$ such that Theorem 
\ref{thm:scallop2} applies, any graphon $g \in \regular_{e,t}$ that is weakly equivalent to the 
multipodal entropy maximizer described in that theorem is actually group 
equivalent to the multipodal entropy maximizer. 
\end{corollary}

\subsection{Distinct phases and rank}\label{OrderParam}

\begin{theorem} \label{thm:orderparameter2}
Each of the phases above the scallops proven in Theorems \ref{thm:flat2} and \ref{thm:scallop2} have unique optimal graphons with distinct symmetries and cannot be analytically continued to one another.
\end{theorem}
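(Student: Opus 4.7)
The plan is to exhibit a discrete order parameter, constant within each phase but taking different values across phases, which rules out analytic continuation on connected domains.

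First I would record the relevant discrete data coming out of Theorems \ref{thm:flat2} and \ref{thm:scallop2}. The flat phase is symmetric bipodal with exactly $2$ podes and $(2,0)$ symmetry, while the $n$-th scallop phase is $(n+2)$-podal with $(n,2)$ symmetry. Uniqueness of the optimal graphon in each phase is part of those theorems. The quantitative estimates they supply moreover show these descriptions are \emph{exact} for $\Delta t$ small: all pode widths are strictly positive, and the graphon values in each of the three classes (diagonal entries $\exp(-\Theta(\beta))$, off-diagonal entries $1-\exp(-\Theta(\beta))$, and the middle value $p$ bounded away from $\{0,1\}$) are numerically distinct. Consequently the true automorphism group of the optimal graphon in each phase equals the stated symmetry group, and the number of distinct columns of the reduced graphon equals the stated pode count. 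This already yields distinctness of symmetries across phases, since the pode counts $2,3,4,\ldots$ are pairwise distinct.

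Next I would rule out analytic continuation. Take as order parameter either the symmetry group of the optimal graphon or, equivalently, the number of distinct columns of the reduced graphon ($2$ in the flat phase and $n+2$ in the $n$-th scallop phase). If an optimal graphon at some point is $k$-podal with symmetry group $G$, then, working with reduced graphons parametrized by a fixed finite set of pode widths and values, that graphon lies in the $G$-invariant affine subspace; any analytic family passing through it must stay in that subspace on a neighborhood, since the $G$-invariance conditions are closed analytic constraints. If two phases with symmetry groups $G_A \ne G_B$ could be linked by a single analytic family $\tilde g$ on a connected open domain, then $\tilde g$ would have to be invariant under both $G_A$ and $G_B$ throughout that domain, hence under $\langle G_A, G_B \rangle$, contradicting the exactness of symmetry in at least one of the two phases.

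The main obstacle is to make analytic continuation of graphons precise despite the measure-preserving equivalence relation. The resolution mirrors the last paragraph of the proof of Theorem \ref{thm:scallop2}: each phase appears as a $2$-dimensional analytic subvariety, cut out by the analytic Euler-Lagrange equations, inside the product of the Razbarov triangle with a finite-dimensional space of reduced multipodal graphons with a fixed pode count and symmetry pattern. Distinct phases live either in distinct ambient parameter spaces (different pode counts) or in distinct irreducible components of a common ambient space (different symmetry patterns within a common pode count), and the identity principle for analytic varieties prevents a single analytic family from restricting to two distinct branches of the Euler-Lagrange variety.
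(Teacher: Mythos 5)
There is a genuine gap in the step that rules out analytic continuation. Your mechanism is the claim that ``any analytic family passing through a $G$-invariant graphon must stay in the $G$-invariant subspace on a neighborhood, since the $G$-invariance conditions are closed analytic constraints.'' That is backwards: closed analytic conditions satisfied at a point, or on a subregion, do not propagate outward; the identity principle only lets you propagate vanishing \emph{from} an open subset \emph{to} the whole connected domain, and to invoke it you must first have a single finite-dimensional parametrization of the optimal graphon that is analytic in $(e,t)$ across the entire hypothetical combined phase. But the paper's definition of a phase only guarantees analyticity of equivalence-invariant quantities (subgraph densities), not of pode widths and pode values, and producing a global analytic multipodal parametrization across regions with different pode counts is precisely the thing whose impossibility you are trying to prove --- so the argument is circular at the key point. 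The fallback that ``distinct phases live in distinct ambient parameter spaces'' does not hold either: a bipodal graphon sits inside every $(n+2)$-podal parameter space as a degenerate point (zero-width podes or coinciding values), so the ambient spaces are not disjoint, and neither pode count nor the symmetry group is manifestly an analytic invariant (inside a perfectly analytic bipodal family the symmetry group jumps up at \ER{} points where the two pode values coincide).

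What the paper does instead is build the bridge your argument is missing: it uses the \emph{rank} of the optimal graphon ($2$ in the flat phase, $n+2$ above the $n$-th scallop) and encodes it in quantities that are guaranteed analytic under the paper's notion of continuation, namely subgraph densities. Concretely, Newton's identities give a polynomial $p_k$ in the traces $t_j=\TR(A^j)$ computing the determinant-like quantity that vanishes iff the rank is less than $k$; since $\TR(g)$ and $\TR(g^2)$ are not subgraph densities, the paper applies $p_k$ to $g^3$, so the order parameter is a polynomial in the densities of $j$-gons with $j=3,6,\ldots,3k$. This function is analytic in $(e,t)$ on any phase by definition, vanishes identically on the lower-rank regions, and is nonzero on the rank-$k$ region, so the identity theorem immediately forbids a connected phase containing both. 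Your symmetry/pode-count invariant could in principle be used, but only after you supply an analogous expression of it (or of some consequence of it) in terms of subgraph densities or some other quantity known a priori to continue analytically; as written, the proposal asserts rather than proves that the invariant cannot change along an analytic continuation.
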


\begin{proof} 
The optimal graphons described by Theorem 
\ref{thm:flat2} have rank 2, while the optimal graphons above the 
$n$-th scallop described by Theorem \ref{thm:scallop2} have rank $n+2$.
We will construct a sequence of ``order parameters,'' each a polynomial
in finitely many subgraph densities, to distinguish between graphons 
of different rank. Specifically, the $k$th order parameter is 
identically zero whenever the rank of the optimal 
graphon is $k-1$ or less, and is never zero when the rank of the graphon is 
$k$. Since an analytic function on a connected set 
that is zero on an open subset is zero everywhere, there cannot be 
an analytic path connecting the $(k-2)$-nd scallop (where the graphon
has rank $k$ and the order parameter is nonzero) to the previous scallops
or to the $A(2,0)$ phase, where the order parameter is zero. 
In other words,
the phases above the different scallops are all distinct. 

Newton's identities relate the determinant of a $k \times k$ matrix $A$ to
the traces of $A^j$ for $j=1,2,\ldots, k$. For instance, if we let $t_j=\TR(A^j)$, then the determinants of small matrices are given by 
the formulas
\be \det(A) = \begin{cases} (t_1^2-t_2)/2 & k=2, \cr 
(t_1^3-3t_1t_2+2 t_3)/6 & k=3, \cr 
(t_1^4 - 6 t_1^2 t_2 + 8 t_1 t_3 -3 t_4)/24 & k=4. \end{cases} \ee
Let $p_k(A)$ be the polynomial in the variables $\{t_j\}$ 
that gives the determinant of a $k\times k$ matrix $A$. 

The same ideas work for arbitrary diagonalizable linear operators,
for which the rank equals the number of nonzero eigenvalues, counted
with multiplicity.
If we evaluate $p_k$ on any diagonalizable trace-class
operator,
we get zero if the rank of the operator is less than $k$ and the product of the
nonzero eigenvalues (counted with multiplicity) 
if the rank is equal to $k$. The key algebraic fact is that,
for operators of rank $k$ or less, we have 
\be t_j = \lambda_1^j + \cdots + \lambda_k^j, \ee
where some of the eigenvalues $\lambda_i$'s may be zero, 
and $p_k$ computes
$\lambda_1 \cdots \lambda_k$, which is nonzero precisely when there are 
$k$ nonzero eigenvalues (counted with multiplicity).  

In particular, we can 
apply these formulas to graphons. (Graphons are always diagonalizable, being symmetric and trace class.) 
For instance, the expression 
$(t_1^4 - 6 t_1^2 t_2 + 8 t_1 t_3 -3t_4)/24$, where now $t_j=\TR(g^j)$, 
gives zero if the rank of the graphon $g$ is less than 4 and gives a nonzero
number if the rank is equal to 4. 

When $j>2$, $t_j$ is the density of $j$-gons. The problem is that we cannot 
realize $t_1$ and $t_2$ as subgraph densities, so we cannot assume {\em a priori} that
$t_1$ and $t_2$ are analytic functions of $(e,t)$ in each phase. 
To get around this problem, we define our $k$th order parameter to be 
$p_k(g^3)$. This is still a polynomial in $\{t_j\}$, only now $j$ 
ranges from $3$ to $3k$ in steps of 3. 
In particular, $t_j$ is the density of $j$-gons for each applicable $j$. 

The $k$-th order parameter is then zero if $g^3$ has rank less than 
$k$ and is nonzero if $g^3$ has rank $k$. 
But $g^3$ has the same rank as $g$, so we are actually testing the 
rank of $g$. 

In summary: the $k$th order parameter is an analytic function of 
$(e,t)$ in each phase, being built from subgraph densities. It is identically
zero on the regions above the 0th, 1st, $\ldots$, $(k-3)$rd scallops but is never
zero on the region above the $(k-2)$nd scallop. Thus the region above the $(k-2)$nd
scallop is in a different phase from the regions above all the previous 
scallops. Each scallop has its own unique phase. 

\end{proof}

\section{Proof of Theorem \ref{thm:top}}
Once again we prove a slightly stronger version of the theorem
stated in the introduction. 

\begin{theorem}\label{thm:top2}
For each fixed $e \in (0,1)$ and all $t$ sufficiently close to (but below) $e^{3/2}$, the 
optimal graphon with edge/triangle densities $(e,t)$ is unique and bipodal. Asymptotically, the Boltzmann entropy 
scales as $-(e^{3/2}-t)\ln(e^{3/2}-t)$
and the Lagrange multiplier $\beta$ scales as $\ln(e^{3/2}-t)$. 
\end{theorem}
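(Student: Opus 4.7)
The plan is to follow the same three-stage script that drives the proofs of Theorems~\ref{thm:flat2} and \ref{thm:scallop2}, now with the top-boundary graphon $g_0$ at $(e,e^{3/2})$ playing the role of the bottom-boundary graphon in those arguments. By Kruskal--Katona in the graphon setting, the unique entropy maximizer at $(e,e^{3/2})$ --- in fact the only graphon at that density pair, up to equivalence --- is the clique graphon: bipodal with $|I_1^0|=\sqrt{e}$, equal to $1$ on $I_1^0\times I_1^0$ and $0$ elsewhere. Since $S$, $\edens$ and $\tdens$ are $L^2$-continuous, any sequence of optimal graphons at $(e,t)$ with $t\uparrow e^{3/2}$ converges to $g_0$ in $L^2$. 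Fix a small $\epsilon$; for $t$ sufficiently close to $e^{3/2}$ I decompose $[0,1]=I_1\sqcup I_2\sqcup I_3$ so that for $x\in I_1$ the column $g_x$ is within $\sqrt{\epsilon}$ in $L^2$ of $\mathbf{1}_{I_1^0}$, for $x\in I_2$ the column $g_x$ is within $\sqrt{\epsilon}$ of $0$, and $|I_3|<\epsilon$.

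Next I extract pointwise information from the $L^2$ control. Computing $G(x,y)=\langle g_x \mid g_y\rangle$ on each main rectangle, I find $G\approx e$ on $I_1\times I_1$ and $G\approx 0$ on the other three rectangles, to within $O(\sqrt{\epsilon})$. Plugging these into the Euler--Lagrange equation \eqref{eq:EL1} forces $g(x,y)$ to be exponentially close (in $|\beta|$) to $1$ on $I_1\times I_1$ and exponentially close to $0$ on the other three main rectangles, which is the $0\leftrightarrow 1$ mirror of the flat-boundary configuration.

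The key structural difference between this theorem and its flat counterpart appears in the \Value{} analysis. Near the top boundary $\partial\B/\partial t<0$, so $\beta$ has opposite sign from the flat and scallop cases while still diverging in magnitude. Reducing $W(C)$ to a function of the averages $(a_1,a_2)$ of $C$ on the two approximate podes, the triangle integral $\iint C(y)C(z)g(y,z)\,dy\,dz$ reduces at leading order to $c_1^2 a_1^2$ (because $g_0$ is supported on the single rectangle $I_1^0\times I_1^0$), so the finite-dimensional objective takes the form
\[
c_1 H(a_1) + c_2 H(a_2) - \alpha(c_1 a_1 + c_2 a_2) - \tfrac{\beta}{2}c_1^2 a_1^2,
\]
plus corrections of order $\sqrt{\epsilon}$ coming from off-diagonal entries of $g$. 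The sign of $\beta$ makes the quadratic in $a_1$ convex, so this objective has two genuine local maxima --- one near $(a_1,a_2)=(1,0)$ and one near $(0,0)$ --- whose \Value{}s coincide at the Euler--Lagrange values of $(\alpha,\beta)$. These are precisely the two column types exhibited in $I_1$ and $I_2$, so every $x\in I_3$ can be reassigned to $I_1$ or $I_2$ according to the form of $g_x$. The $\Delta g_{ij}$ iteration from Theorem~\ref{thm:flat2} then bounds oscillations on each rectangle by a factor of $|\beta|\exp(-\Omega(|\beta|))$ times oscillations on neighboring rectangles; composing around the four rectangles yields a self-bound strictly less than $1$, forcing all oscillations to vanish and giving exact bipodality.

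Once exact bipodality is known, the remaining analysis is ordinary multivariable calculus on the two-dimensional family of bipodal graphons at fixed $(e,t)$, with analyticity of parameters in $(e,t)$ following from the implicit function theorem applied to the Euler--Lagrange system. Writing $A=1-a$, $B=b$, $D=d$ for the three rectangle values, the edge constraint together with the pointwise Euler--Lagrange equations force $a,b,d$ to be of order $e^{3/2}-t$, so the entropy is dominated by $c_1^2 H(1-a)\sim -a\ln a$, giving $\Delta\B\sim -(e^{3/2}-t)\ln(e^{3/2}-t)$, while $a\sim\exp(\alpha+\beta e)$ combined with the explicit size of $\alpha$ gives $|\beta|\sim -\ln(e^{3/2}-t)$. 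I expect the main technical obstacle to be the \Value{}-analysis step: the non-concavity here is carried by the self-term $-\tfrac{\beta}{2}c_1^2 a_1^2$ rather than by the bilinear coupling $-e\beta ab$ of Theorem~\ref{thm:flat2}, and care will be needed in handling the first-order corrections from off-diagonal entries of $g$ so that the two local maxima are cleanly separated from any spurious stationary point where both $a_1$ and $a_2$ could be intermediate.
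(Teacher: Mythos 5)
Your proposal follows essentially the same route as the paper's proof: $L^2$-proximity to the unique clique graphon $g_0$ at $(e,e^{3/2})$ to define the approximate podes $I_1,I_2$ and exceptional set $I_3$, a \Value{}-functional analysis (with $\beta$ large and negative, $\alpha$ large and positive, their ratio fixed by equality of the two column \Values) showing every \Value-maximizing column is close to a column type of $g_0$ so that $I_3$ can be reassigned, the fluctuation-iteration argument for exact bipodality, and finally finite-dimensional calculus plus the implicit-function-theorem argument for the scalings of $\B$ and $\beta$ and for analyticity. The only quibble is a constant: on $I_1\times I_1$ one has $G(x,y)\approx\sqrt{e}$ (not $e$), consistent with your (correct) evaluation of the triangle integral as $\approx e\,a_1^2$; this does not affect the structure or validity of the argument.
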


\begin{proof}
We follow the same overall roadmap as 
the proofs of Theorems \ref{thm:flat2} and \ref{thm:scallop2}. 
Specifically, 
\begin{enumerate}
\item Using the proximity to the upper boundary, we break $[0,1]$
into two large podes $I_1$ and $I_2$ and a small exceptional set 
$I_3$ such 
that $g_x$ is $L^2$-close to the indicator function of $I_1$ when
$x \in I_1$ and is $L^2$-close to zero when $x \in I_2$. 
\item Equating the \Values{} of $g_x$ when $x \in I_1$ to those of 
$g_x$ when $x \in I_2$, we determine that $\beta/\alpha
\approx -2/e$. The multiplier $\beta$ is large and negative,
while $\alpha$ is large and positive. 
\item Maximizing $W(C)$ for an arbitrary $C: [0,1] \to [0,1]$, we show 
that every column is close to a typical column in the first or second
pode. After reassigning points,
$I_3$ is then empty. The control this gives us on $G(x,y)$ shows that 
$g(x,y)$ is everywhere exponentially close to 0 or 1.
\item Bounding the fluctuations in each rectangle by multiples 
of the fluctuations in other rectangles to show that all 
fluctuations are in fact zero. In other words, our optimal graphon is 
exactly bipodal with values that are exponentially close to 0 or 1. 
\item Using Theorem \ref{thm:no-singular} to eliminate the possibility
that some points $(e,t)$ might have entropy maximizers without well-defined Lagrange multipliers 
$(\alpha, \beta)$.
\end{enumerate}

Step 1 is identical to what we have done before. There is a 
unique reduced graphon at $(e, e^{3/2})$,
namely the equivalence class of a graphon $g_0$ that is 1 on $I_1 \times I_1$ and zero elsewhere, where $I_1$ is a pode of size $\sqrt{e}$. Every graphon with $t$ 
close to $e^{3/2}$, and in particular any entropy-maximizing graphon,
must be $L^2$ close to $g_0$. This means that for all $x$'s outside
of a set of small measure, $g_x$ is $L^2$-close to the corresponding
column of $g_0$. This also implies that $G(x,y)$ is close to $\sqrt{e}$
when $x$ and $y$ are both in $I_1$ and is close to zero when either
is in $I_2$. 

The \Value{} of a column that is nearly zero is of course nearly zero.
The \Value{} of a column that is nearly 1 on $I_1$ and nearly zero
elsewhere is approximately 
\be - 2\alpha \sqrt{e} - \beta e^{3/2}. \ee
Since all columns must have the same \Value, we must have 
$\beta/\alpha \approx -2/e$. The Lagrange multipliers $\alpha$
and $\beta$ diverge at the same rate as we approach the boundary,
with $\alpha \to \infty$ and $\beta \to -\infty$. 

Now consider an arbitrary function $C: [0,1] \to [0,1]$. 
Let $a$ be the 
average of $C(y)$ on $I_1$ and let $b$ be the average on $I_2$. 
Using the approximation that $G$ is $L^2$-close to $\sqrt{e}$ 
times the indicator function of $I_1 \times I_1$, we get that 
\be W(C) \le 2\sqrt{e} H(a) + 2(1-\sqrt{e}) H(b) - 2\alpha \sqrt{e} a
- 2\alpha (1-\sqrt{e}) b - \beta e^{3/2} a^2, \ee
with equality if $C$ is constant on $I_1$ and constant on $I_2$. 
Since $\alpha$ is large and positive, we must have $b$ exponentially
close to 0. Setting $b \approx 0$, our \Value{} is then approximately
\be \alpha \sqrt{e} (a^2 - a). \ee
This is of course maximized at the endpoints $a=1$ and $a=0$, 
being negative when $a \in (0,1)$. In other words, 
any \Value-maximizing column must either have $a \approx 1$ and 
$b \approx 0$, and so must be close to the columns in $I_1$, or 
$a \approx 0$ and $b \approx 0$, and so must be close to the 
columns in $I_2$. Reassigning the points of $I_3$ to $I_1$ or $I_2$
accordingly, we obtain a situation where $I_3$ is empty. 

To constrain the fluctuations in $g(x,y)$ in each rectangle, we
recall the variational equations 
\be H'(g(x,y)) = \alpha + \beta G(x,y). \ee
Since $G(x,y) \approx 0$ or $\sqrt{e}$, depending on which 
quadrant we are in, this implies that $g(x,y)$ is exponentially 
close to 1 on $I_1 \times I_1$ and exponentially close to 
0 on $I_1 \times I_2$ and $I_2 \times I_2$. In particular, $H''(g)$,
which scales as the larger of $1/g$ and $1/(1-g)$, is much larger
than $|\beta|$. Looking at the change in the left hand side and 
right hand side of this equation within a single quadrant, we 
see that $H''(g)$ times the maximum fluctuation within any quadrant
is of the same order as $|\beta|$ times the maximum fluctuation 
within any quadrant. But that means that the maximum fluctuation is 
bounded by a small multiple of itself, and so must be zero. Our 
graphon is exactly bipodal. 

Finally, we do some calculations in the space of bipodal graphons. 
Let $g_{11}$, $g_{12}$ and $g_{22}$ be the values of the
optimal graphon on $I_1 \times I_1$, $I_1 \times I_2$ and $I_2 \times I_2$,
respectively. We treat $g_{11}$, $g_{12}$ and $g_{22}$ as free variables
and adjust the width of $I_1$ to keep the edge density fixed. To 
leading order, $e^{3/2}-t$ is a linear function of $(1-g_{11}, g_{12},
g_{22})$. However, $1-g_{11}$, $g_{12}$ and $g_{22}$ all scale as 
exponents of $\beta$, so $\beta$ must scale as $\ln(e^{3/2}-t)$. 
The entropy goes as $-(1-g_{11})\ln(1-g_{11}) - g_{12} \ln(g_{12})
- g_{22} \ln(g_{22})$, which then scales as $-(e^{3/2}-t)) \ln(e^{3/2}-t)$.

The analyticity of $g$ as a function of $(e,t)$ follows from the 
same argument as in the proof of Theorem \ref{thm:scallop2}, only with
the space of $(n,2)$ symmetric graphons replaced by the space of
bipodal graphons. 

\end{proof}

Finally, as with Theorems \ref{thm:flat2} and \ref{thm:scallop2}, we note that
we have determined the entropy maximizer up to group equivalence, implying 

\begin{corollary} \label{cor:top2} Let $(e,t)$ be such that Theorem 
\ref{thm:top2} applies. Then any graphon $g \in \regular_{e,t}$ that is weakly
equivalent to the bipodal maximizer defined in the theorem is actually group
equivalent to that bipodal maximizer. 
\end{corollary}

\section{ERGM-invisibility}\label{sec:No-ERGM} 

Using Lagrange multipliers is superficially similar to studying an 
exponential random graph model (ERGM; see Section 6 in \cite{Ch16}, or \cite{DL}, or \cite{CD} for a relevant introduction),
where one considers an ensemble of {\em all} graphs on $n$ vertices, where the probability of a given graph $G$
is proportional to 
\be \exp \left (-n^2 \left [ \alpha \edens(G) + \frac{\beta}{3} \tdens(G) \right ] \right ). \ee
Here $\edens(G)$ and $\tdens(G)$ are the edge and triangle densities of the graph $G$ and $\alpha$ and $\beta$ are variables which can move the distribution, expected to be narrowly peaked for large $n$. 
(In the literature, ERGMs are usually described in terms of parameters
$\beta_1=-\alpha$ and $\beta_2=-\beta/3$, but that linear change of variables does not matter.)
%This is analogous to the way that a physical system in thermal equilibrium can be in a state with any energy $E$, only with probability $\exp(-E/kT)$. 

ERGMs are widely used to model real-world networks, with $n$ necessarily finite. See \cite{F1}, \cite{F2} and their bibliographies for references relevant to the current discussion.
By 2010 there was literature noting that fitting of parameters to data had various problems, and in \cite{CD} the recent LDP for $\G(n,p)$ graphs \cite{CV} was applied to see if that would help. See \cite{Bhamidi} and the introduction in \cite{CD} for background.
\cite{CD} illuminated some issues but left some others unresolved. Our treatment of the Boltzmann entropy can help, as follows.

The $n \to \infty$ limit of an ERGM with parameters $(\alpha, \beta)$ can be understood in terms of graphons and the 
function 
\be \Psi(\alpha, \beta) = \max_{g} \left ( S(g) - \alpha \edens(g) -\frac{\beta}{3} \tdens(g)\right ) \equiv \max_{g} F(g). \label{eq:psi} \ee

The LDP relates the graphon that maximizes $F(g)$ on the right hand side of (\ref{eq:psi}), for
the given values of $\alpha$ and $\beta$, to typical large graphs in the ensemble.
Note that $\Psi(\alpha, \beta)$ is the Legendre transform of the Boltzmann entropy $\B(e,t)$: 
\be \Psi(\alpha, \beta) = \max_{e,t} \left ( \B(e,t) - \alpha e - \frac{\beta}{3} t \right ). \label{eq:Legendre} \ee

If the Boltzmann entropy function were convex and the Razborov triangle were convex, then the Legendre transform 
(\ref{eq:Legendre}) would be invertible within each phase \cite{Tou}. If that were true, we could tune $\alpha$ and $\beta$ 
within each phase to get whatever 
values of $(e,t)$ we wanted. In statistical mechanics, this ability to switch back and forth
between fundamental variables and conjugate variables is called
{\em equivalence of ensembles}.  

With graphs, the Boltzmann entropy function is not convex and neither is
the Razborov triangle. There is no equivalence of ensembles; the Legendre transform (\ref{eq:Legendre}) is not invertible. 
Specifically, there are many values of $(e,t)$ for
which there do not exist {\bf any} values of $(\alpha, \beta)$ whose $F$-maximizing 
graphons have 
edge/triangle densities $(e,t)$. We call such points $(e,t)$ {\em ERGM-invisible}. 
We can still understand
$\Psi(\alpha, \beta)$ and the phases of an ERGM by studying $\B(e,t)$, since $\Psi$ is still the Legendre
transform of $\B$. However, we cannot understand $\B(e,t)$, or graphs with general densities $(e,t)$,
by studying $\Psi(\alpha, \beta)$. 

It has long been known that all points with $t$ greater than $e^3$, and even moderately
less than $e^3$, are ERGM-invisible \cite{CD}. 
The only points off the \ER{} curve that {\em might} be ERGM-visible 
lie close to the lower boundary of the  
triangle (Figure \ref{fig:phase_space}). We now show that, because of the nonconvexity of the Razborov triangle, most of those are also
ERGM-invisible. 

\begin{theorem} \label{thm:noERGM1} If $n$ is a positive integer and $\frac{n}{n+1} < e < \frac{n+1}{n+2}$, and if 
$t$ is sufficiently close to the minimum triangle density $t_0$, then $(e, t)$ is ERGM-invisible.
\label{thm:no-ERGM}
\end{theorem}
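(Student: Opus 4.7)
I would prove the theorem by contradiction, showing that the Lagrange-multiplier pair $(\alpha,\beta)$ supplied by Theorem~\ref{thm:scallop2} cannot give rise to a free-energy-maximizing graphon with densities $(e, t_0+\Delta t)$. Any such global maximizer would also maximize Shannon entropy at fixed $(e,t_0+\Delta t)$, hence by Theorem~\ref{thm:scallop2} equals the unique constrained maximizer $g$, and the multipliers would necessarily be those of that theorem. Equivalently, I would show that $\B$ lies strictly below its concave envelope at this point, which is exactly the condition for ERGM-invisibility.

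The strategy is a Jensen-type argument. Consider the two test points
\[
x_\pm \;=\; \bigl(e\pm\epsilon,\; t_0(e)+\Delta t \pm t_0'(e)\,\epsilon\bigr),
\]
with midpoint $(e,\, t_0(e)+\Delta t)$. The formula $t_0'(e) = 3n\,c(e)$ derived inside the proof of Theorem~\ref{thm:scallop2}, combined with the explicit expression $c(e) = \bigl(1+\sqrt{1-(n+2)e/(n+1)}\bigr)/(n+2)$, shows that $c$ (and hence $t_0'$) is strictly decreasing in $e$ on the interior of the $n$-th scallop, so $t_0''(e) < 0$ there. The vertical offset of each $x_\pm$ above the scallop is therefore $\Delta t + \tfrac{|t_0''(e)|}{2}\epsilon^2 + O(\epsilon^3)$, strictly larger than $\Delta t$, so for small $\epsilon$ both test points sit inside the Razbarov triangle and inside the phase covered by Theorem~\ref{thm:scallop2}.

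Substituting the sharpened asymptotic $\B(e',\,t_0(e')+s) = \B_0(e') + K(e')\sqrt{s} + O(s)$ into $\B(x_+)$ and $\B(x_-)$ and averaging, the $\epsilon$-odd contributions cancel while the $\epsilon$-even ones combine to give
\[
\tfrac12\bigl[\B(x_+)+\B(x_-)\bigr] - \B(e,\, t_0+\Delta t)
\;=\; \frac{K(e)\,|t_0''(e)|}{4\sqrt{\Delta t}}\,\epsilon^2 + R(\epsilon,\Delta t),
\]
with remainder $R = O(\epsilon^2) + O(\Delta t) + O(\epsilon^4/\Delta t^{3/2})$. Choosing $\epsilon$ in the range $\Delta t^{3/4} \ll \epsilon \ll \Delta t^{1/2}$, which is non-empty once $\Delta t$ is small enough, makes the displayed leading term both dominant and strictly positive. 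Hence the concave envelope of $\B$ at $(e, t_0+\Delta t)$ strictly exceeds $\B(e, t_0+\Delta t)$; equivalently, one of the two free energies $F_{\alpha,\beta}$ evaluated at the constrained maximizers sitting over $x_+$ or $x_-$ must beat $F_{\alpha,\beta}(g)$, contradicting ERGM-visibility.

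The main obstacle is justifying this Taylor expansion rigorously: one needs not merely the leading-order scaling $\Delta\B\sim\sqrt{\Delta t}$ asserted in Theorem~\ref{thm:scallop2}, but also that the implicit coefficient $K(e')$ and the $O(s)$ remainder have bounded $e'$-derivatives uniformly on a neighborhood of the base $e$. The natural route is to upgrade the analyticity statement of Theorem~\ref{thm:scallop2} by observing that the parameters of the optimal graphon are in fact jointly analytic in $(e, \sqrt{t-t_0(e)})$, the natural regularizing coordinate given the $\sqrt{\Delta t}$-scaling of $\beta$. Once that is in hand, the remainder estimates follow by direct differentiation and the Jensen bound closes the argument.
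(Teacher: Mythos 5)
Your criterion is the right one (if $(e,t)$ sits strictly below the concave envelope of $\B$, no affine function $\alpha e'+\frac{\beta}{3}t'+c$ can support the graph of $\B$ at $(e,t)$, so no $(\alpha,\beta)$ makes a free-energy maximizer land there -- and note your Jensen comparison actually rules out \emph{all} $(\alpha,\beta)$ at once, so your opening paragraph identifying the multipliers is not even needed), and the geometric input, concavity of the scallop via $t_0'(e)=3nc(e)$ with $c$ decreasing, is correct. But as written there is a genuine gap: the entire argument hinges on the expansion $\B(e',t_0(e')+s)=\B_0(e')+K(e')\sqrt{s}+O(s)$ holding \emph{uniformly} for $e'$ in a neighborhood of $e$, with $K$ at least $C^1$ and the $O(s)$ remainder uniform. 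Theorem \ref{thm:scallop2} gives only the fixed-$e$ scaling $\Delta\B=\Theta(\sqrt{\Delta t})$ and analyticity in the open region above the scallop; it says nothing about the existence of a coefficient function $K(e)$, its regularity, or uniformity of the remainder as $\Delta t\downarrow 0$, and your proposed repair (joint analyticity in $(e,\sqrt{t-t_0(e)})$ up to the boundary) is itself an unproven strengthening. This is not cosmetic: your Jensen gain $\frac{K|t_0''|}{4\sqrt{\Delta t}}\,\epsilon^2$ is a second-order correction riding on main terms of size $\sqrt{\Delta t}$, so two-sided bounds with mismatched constants (which is all the theorem supplies) cannot close the inequality; without the uniform expansion the window $\Delta t^{3/4}\ll\epsilon\ll\Delta t^{1/2}$ proves nothing. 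There is also a smaller uniformity point of the same kind: the threshold ``$\Delta t$ sufficiently small'' in Theorem \ref{thm:scallop2} must be taken uniform over $e'\in(e-\epsilon,e+\epsilon)$ for the test points $x_\pm$ to be covered at all.

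The paper avoids all of this with a macroscopic comparison rather than an infinitesimal one: if $(e,t_0+\Delta t)$ were ERGM-visible, the multipliers would be the constrained ones, with $\beta\sim 1/\sqrt{\Delta t}$ large; comparing the free energy of the optimal graphon with that of the zero-entropy Tur\'an graphons at the two neighboring cusps, the concavity (down) of the scallop makes the linear part $-\alpha\edens-\frac{\beta}{3}\tdens$ larger at one cusp by $\Theta(\beta)$, which swamps the entropy difference, bounded by $\ln 2$. That argument uses only facts already proven (blow-up of $\beta$, concavity of the scallop, boundedness of $S$) and needs no refined asymptotics of $\B$. If you want to salvage your local route, you must first actually establish the uniform square-root expansion; otherwise, replace the nearby test points $x_\pm$ by the far-away cusps, where $\B$ is known exactly (zero), and your concave-envelope criterion closes immediately -- which is in essence the paper's proof.
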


\begin{proof}
Fix a value of $e$ strictly between $\frac{n}{n+1}$ and $\frac{n+1}{n+2}$. The points with edge 
density $e$ and triangle density just above the minimum must have large values of the Lagrange 
multipliers $\alpha$ and $\beta$. However, the scallop itself is concave down, so for positive values of $\beta$, the 
linear function $\alpha e + \beta t$ is greater at one or both of the neighboring cusps 
(at edge density $\frac{n}{n+1}$ and $\frac{n+1}{n+2}$) than near the interior of the scallop. For large positive 
values of $\beta$ (and correspondingly large negative values of $\alpha$), this difference is greater than the 
bounded difference in Shannon entropy between the graphons described by Theorem \ref{thm:scallop2} and the 
zero-entropy graphons at the cusps. Since large values of $\beta$ correspond to small values of 
$\Delta t$, we conclude that, for all sufficiently small values of $\Delta t$, the point 
$(e, t_0 + \Delta t)$ is ERGM-invisible. 
\end{proof}

A similar result applies at the top of the Razborov triangle. 
\begin{theorem} \label{thm:noERGM2} 
For each $e \in (0,1)$ and for all $t$ sufficiently close to (and less than) $e^{3/2}$,
$(e,t)$ is ERGM-invisible. 
\end{theorem}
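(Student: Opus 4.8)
The plan is to follow the strategy of the proof of Theorem \ref{thm:noERGM1}, with the two corners $(0,0)$ and $(1,1)$ of the Razbarov triangle playing the role of the two cusps that flank a scallop. Suppose, for contradiction, that $(e,t)$ with $t$ close to (but below) $e^{3/2}$ were ERGM-visible: for some $(\alpha,\beta)$ the free energy $F_{\alpha,\beta}$ of \eqref{free energy} has a global maximizer $g^{*}$ with $(\edens(g^{*}),\tdens(g^{*}))=(e,t)$. Then $g^{*}$ maximizes $S$ among all graphons with those densities, so $S(g^{*})=\B(e,t)$; and, being a free-energy maximizer, $g^{*}$ satisfies \eqref{eq:EL0} with these same parameters $(\alpha,\beta)$. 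Since $e<1$ we have $e^{3/2}>e^{3}$, so for $t$ close to $e^{3/2}$ the point $(e,t)$ lies off the \ER{} curve and its Lagrange multipliers are unique; hence $(\alpha,\beta)$ must be the multipliers of Theorem \ref{thm:top2}. In particular, as $t\to e^{3/2}$ we have $\beta\to-\infty$ while $\B(e,t)\to 0$.

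Next I would test $g^{*}$ against two trivial competitors: the empty graphon $g\equiv 0$, for which $F_{\alpha,\beta}=0$, and the complete graphon $g\equiv 1$, for which $F_{\alpha,\beta}=-\alpha-\beta/3$. Global maximality of $g^{*}$ yields
\[ \B(e,t)-\alpha e-\tfrac{\beta}{3}t\ \ge\ 0 \qquad\text{and}\qquad \B(e,t)+\alpha(1-e)+\tfrac{\beta}{3}(1-t)\ \ge\ 0. \]
Averaging the first with weight $1-e$ and the second with weight $e$, the terms in $\alpha$ cancel and one is left with
\[ \B(e,t)\ \ge\ \tfrac{\beta}{3}\,(t-e). \]
But $t<e$ once $t$ is close enough to $e^{3/2}<e$, so the right-hand side equals $\tfrac{|\beta|}{3}(e-t)$, which tends to $+\infty$ as $t\to e^{3/2}$ (the factor $e-t$ staying bounded away from $0$ while $\beta\to-\infty$), whereas $\B(e,t)\to 0$. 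This contradiction establishes the theorem. Equivalently, in the supporting-hyperplane picture, no plane through $\bigl(e,t,\B(e,t)\bigr)$ can remain above $\B$ at both corners $(0,0)$ and $(1,1)$ once its slopes are forced to diverge, because the chord joining those corners passes above the near-boundary point $(e,t)$.

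I do not anticipate a genuine obstacle here: the argument is essentially the non-convexity of the Razbarov triangle combined with the asymptotics already proved in Theorem \ref{thm:top2}, in line with the classical fact that every point with $t>e^{3}$ is ERGM-invisible. The two steps needing care are the bookkeeping that pins the ERGM parameters to the Lagrange multipliers of Theorem \ref{thm:top2} (via \eqref{eq:EL0} together with uniqueness of multipliers off the \ER{} curve, which in turn gives $\beta\to-\infty$), and the elementary observation $e^{3/2}<e$ that fixes the sign in the $\alpha$-free inequality. One could instead shorten the proof to a citation of the known ERGM-invisibility of $\{t>e^{3}\}$, but the self-contained version above is preferable because it mirrors, and reuses, the machinery of Theorem \ref{thm:noERGM1}.
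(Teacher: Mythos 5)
Your proposal is correct and follows essentially the same route as the paper: you test the purported ERGM maximizer against the two constant graphons $g\equiv 0$ and $g\equiv 1$, identify the ERGM parameters with the diverging multipliers of Theorem \ref{thm:top2}, and exploit the fact that $(e,t)$ near the top boundary lies strictly below the chord $t=e$ (the paper phrases this as the convexity of $t=e^{3/2}$). Your weighted average of the two comparison inequalities, which cancels $\alpha$ and leaves only $\beta\to-\infty$ to do the work, is just a cleaner bookkeeping of the paper's ``these gains swamp any changes in $S(g)$'' step, and you make explicit the multiplier-identification that the paper leaves implicit.
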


\begin{proof}
Near the top boundary,
$\alpha$ is large and positive while $\beta$ is large and negative. 
However, the boundary curve $t=e^{3/2}$ is concave up, so we can 
decrease $\alpha e + \beta t/3$ by moving to one endpoint $(0,0)$ 
or the other $(1,1)$. Whenever $\alpha$ and $\beta$ are big enough
in magnitude,
in other words whenever we are close enough to the top boundary, 
these gains swamp any changes in $S(g)$. Either
the constant graphon $g=0$ or the constant graphon $g=1$ yields a larger value of $F$ than the optimal graphon at
$(e,t)$. 
\end{proof}

The statement of Theorem \ref{thm:noERGM2} is not new; see \cite[Theorem 6.2]{CD}. However, the simplicity of the proof gets to the heart
of why points in this region are ERGM-invisible. 

Considering the Razborov triangle as a whole, only a few pieces are ERGM-visible. The \ER{} curve $t=e^3$ is 
ERGM-visible. Since $d\edens$ and $d\tdens$ are collinear at constant graphons, each point on the \ER{} 
curve actually corresponds to an infinite set of $(\alpha,\beta)$ values. A neighborhood of each cusp is ERGM-visible; that's
what you get when $\beta$ is large and positive. As far as we can tell, a neighborhood of the flat part of the lower
boundary is also ERGM-visible. In short, ERGMs can see homogeneous \ER{} or $A(n,0)$ structures, where every vertex looks like every other vertex. But that's all. 
%It appears that ERGMs can be used to study \ER{} graphs 
%and parts of the $A(n,0)$ phases, but not the $F(1,1)$, $B(n,1)$ or $C(n,2)$ phases of Figure \ref{fig:conjecture}. 

In this paper, we have shown how a homogeneous constraint on the total 
number of edges and triangles leads to inhomogeneous $F(1,1)$ or $C(n,2)$ structures, 
where vertices in one set of podes look very different from vertices in another set of podes. 
We previously showed similar behavior near the \ER{} curve. However, ERGMs cannot see this spontaneous emergence of inhomogeneity, as the portions of the
Razborov triangle where such emergent inhomogeneity occurs are all ERGM-invisible.
 
This is not to deny the real successes that ERGMs have had. Starting with \cite{CV} and \cite{CD}, definitions
were given of phases in the $(\alpha, \beta)$ plane. The existence of phases and phase transitions was proven
quickly, far faster than for the analogous problems discussed in this paper. Those are important triumphs. 
Fundamentally, the two approaches to studying large graphs  represent different parts of applied mathematics, with different goals.

%{\color{red} Although we hope the above gives some insight into the fact that adjusting parameters in an ERGM does not produce as wide a range of edge/triangle densities as might be hoped, a concern in \cite{CD}, it is certainly true that the study of ERGMs through graph limits has been fruitful. Starting with \cite{CV} and \cite{CD} definitions were given of phases, including a replica-symmetric phase, and the LDP allowed phase transitions to be quickly proven. The identification of phases and proof of phase transitions through the study of the Boltzmann entropy and 
%the Razborov triangles of Figures \ref{fig:phase_space} and \ref{fig:conjecture}, as discussed in this paper, was {\em much} slower. Fundamentally, the two approaches to studying large graphs  represent different parts of applied mathematics, with different goals.
%illuminated by the {\em emergence} of global features such as edge and triangle densities. 
%ERGMs are, by design, part of applied probability theory while the approach using the Boltzmann entropy is, by design, close to the theory of statistical mechanics, and emergent phenomena.}

\bigskip

\noindent{\bf Acknowledgment.} The work of L.S. was partially
supported by the National Science Foundation under grant DMS-2113468.


\begin{thebibliography}{1}

%\bibitem{AK} {\sc R. Ahlswede and G.O.H. Katona}, Graphs with maximal number of adjacent
%pairs of edges, Acta Math. Acad. Sci. Hungar. 32 (1978) 97-120


\bibitem{Al} {\sc  D. Aldous}, 
\textit{Representations for partially exchangeable arrays of random variables}, J. Multivar. Anal. 11 (1981) 581-598.


\bibitem{AlK} {\sc N. Alon and M. Krivilevich}, \textit{Extremal and probabilistic combinatorics}, 
in W.T. Gowers (ed.), Princeton companion to mathematics, Princeton University Press, Princeton, N.J., 2008, 562-575.


\bibitem{An} {\sc P.W. Anderson}, \textit{Basic Notions of Condensed Matter
Physics}, Benjamin/Cummings, Menlo Park, 1984.

\bibitem{Au} {\sc T. Austin}, \textit{On exchangeable random variables and the statistics of large graphs and
hypergraphs}, Probability Surveys 5 (2008) 80-145.

\bibitem{BS} {\sc N. Bogoliubov and D. Shirkov}, \tet{Introduction to the Theory of Quantized Fields}, Interscience Publishers, New York, 1959.


\bibitem{Bollobas-Book04}
{\sc B.~Bollobas}, \textit{Extremal Graph Theory}, Dover, New York, 2004.


\bibitem{BCL} 
{\sc C. Borgs, J. Chayes and L. Lov\'{a}sz}, \textit{Moments of
  two-variable functions and the uniqueness of graph limits},
  Geom. Funct. Anal. 19 (2010) 1597-1619.

\bibitem{BCLSV} 
{\sc C. Borgs, J. Chayes, L. Lov\'{a}sz, V.T. S\'os
    and K. Vesztergombi}, \textit{Convergent graph sequences I: subgraph
  frequencies, metric properties, and testing}, { Adv. Math.} { 219}
  (2008) 1801-1851.

\bibitem{Bhamidi}
 {\sc S. Bhamidi, G. Bresler and A. Sly} \textit{Mixing time of exponential random graphs}
{49th Annual IEEE Symposium on Foundations of Computer Science}
IEEE, Washington, DC.
(2008) 803–812.


\bibitem{Ch} {\sc S. Chatterjee},
 \textit{Large Deviations for Random Graphs}. (Lecture notes for the 2015
 Saint-Flour Summer School.) Springer Lecture Notes in
 Mathematics. Springer, Berlin-Heidelberg, 2017.

\bibitem{Ch16}
{\sc S. Chatterjee},
\textit{An introduction to large deviations for random graphs},
Bull. Amer. Math. Soc. {\bf 53} (2016) 617--642.

\bibitem{CDey}
{\sc S. Chatterjee and P. Dey
\textit{Applications of Stein's method for concentration inequalities},
{38} (2010) 2443-2485.}

\bibitem{CD} 
{\sc S. Chatterjee and P. Diaconis}, \textit{Estimating and understanding
exponential random graph models}, Ann. Statist. 41 (2013) 2428-2461;
\texttt{arXiv:1102.2650} (2011).

\bibitem{CV}
{\sc S.~Chatterjee and S.~R.~S. Varadhan}, \textit{The large deviation principle
  for the {Erd\H{o}s-R\'{e}nyi} random graph}, Eur. J. Comb., 32
(2011) 1000-1017; \texttt{arXiv:1008.1946} (2010)

\bibitem{DL} {\sc A.\ Dembo and E.\ Lubetzky},
{\em A large deviation principle for the Erd\H{o}s-R\'{e}nyi
uniform random graph}, Electron. Commun. Probab. 23 (2018) 1-13.

\bibitem{DMRS} {\sc F. Den Hollander, M. Mandjes, A. Roccaverde and N. Starreveld},
\textit{Breaking of ensemble equivalence for perturbed \ER{} random graphs.} Preprint, arXiv:1807.07750v4 (2021).

\bibitem{DJ}
{\sc P. Diaconis, and S. Janson}, \textit{Graph limits and exchangeable random graphs}, Rendiconti di Matematica e delle sue Applicazioni. Serie VII, 28(1), 2008, 33--61. http://www1.mat.uniroma1.it/ricerca/rendiconti/

\bibitem{DS}
{\sc N. Dunford and J. Schwartz}, \textit{Linear Operators, Part I}, Interscience Publishers, New York, 1967.


\bibitem{El} R.S. Ellis,\  \textit{Entropy, Large Deviations, and
Statistical Mechanics}, Springer-Verlag, New York, 1985.

\bibitem{F1}
{\sc  Fienberg, S.E.} (2010).
Introduction to papers on the  modeling and analysis of network data.
{\em Ann. Appl. Statist.} {\bf 4},   1--4.

\bibitem{F2}
{\sc  Fienberg, S.E.} (2010).
Introduction to papers on the  modeling and analysis of network data II.
{\em Ann. Appl. Statist.} {\bf  4},    533--534.


\bibitem{Ho}
  {\sc D. N. Hoover},
  \textit{Row-column exchangeability and a generalized model for probability},
Exchangeability in probability and statistics (Rome, 1981),
North-Holland, Amsterdam, 1982, 281-291.

\bibitem{Isr} {\sc R.\  Israel}, {\em Convexity in the Theory of Lattice Gases},
Princeton University Press, Princeton, 1979.

\bibitem{Kel} {\sc O. Kellenberg},
\textit{Probabilistic symmetries and invariance principles},
Springer, New York, 2005.


%\bibitem{KKRS1}
%{\sc R.~Kenyon, C.~Radin, K.~Ren, and L.~Sadun}, \textit{Multipodal structures and
%phase transitions in large constrained graphs}, J. Stat. Phys. 168(2017) 233-258.

\bibitem{KRRS2}
{\sc R.~Kenyon, C.~Radin, K.~Ren, and L.~Sadun}, \textit{Bipodal structure in 
oversaturated random graphs}, Int. Math. Res. Notices 2018(2016) 1009-1044.

\bibitem{KRRS1}
{\sc R.~Kenyon, C.~Radin, K.~Ren, and L.~Sadun}, \textit{The Phases of Large Networks with Edge and Triangle 
Constraints}, J. Phys. A. 50 (2017) 435001

 
\bibitem{Lan} {\sc O.E. Lanford}, in \tet{Lecture Notes in Physics}, vol. 20, ed. A. Lenard, Springer-Verlag, Berlin, 1973.

\bibitem{Lau}
{\sc R. Laughlin},
{\em A Different Universe: Reinventing Physics from the Bottom Down},
Basic Books (2006).


\bibitem{Lov}
{\sc L.~Lov\'asz}, {\em Large Networks and Graph Limits}, American Mathematical
  Society, Providence, 2012.

\bibitem{LS1} 
{\sc L. Lov\'{a}sz  and  B. Szegedy},
\textit{Limits of dense graph sequences}, 
{ J. Combin. Theory Ser. B} { 98} (2006)  933-957.

\bibitem{LS2} 
{\sc L. Lov\'{a}sz  and  B. Szegedy},
S\tet{zemer\'edi's lemma for the analyst,}
{ GAFA} { 17} (2007)  252-270.

\bibitem{LS3} 
{\sc L. Lov\'{a}sz  and  B. Szegedy},
\textit{Finitely forcible graphons},
{ J. Combin. Theory Ser. B} {101} (2011) 269-301.


\bibitem{Mag} {\sc F. Maggi}, \textit{Optimal Mass Transport on Euclidean spaces}, Cambridge University Press, to appear.

\bibitem{Ma} {\sc W.\ Mantel}, Problem 28, \textit{Wiskundige Opgaven}, {\bf 10} (1906)
60-61.

%\bibitem{Fields} Statistical mechanics was to varying degrees part of the work honored with these Fields medals: Wendelin Werner (2006), Andrei Okunkov (2006), Stanislav Smirnov (2010), Martin Hairer (2014) and Hugo Duminil-Copin (2022). See https://www.mathunion.org/imu-awards/fields-medal


\bibitem{NRS1} {\sc  J. Neeman, C. Radin and L. Sadun}, \textit{Phase transitions in finite random
 networks}, J Stat Phys 181 (2020) 305-328.  
 
\bibitem{NRS3} {\sc J. Neeman, C. Radin and L. Sadun}, \textit{Typical large graphs with given edge and triangle densities}, Probab. Theory Relat. Fields (2023), https://doi.org/10.1007/s00440-023-01187-8. \texttt{arxiv:2110.14052} (2010).

\bibitem{NRS4} {\sc J. Neeman, C. Radin and L. Sadun}, \textit{Existence of a symmetric bipodal phase in the edge-triangle model}, \textit{J. Phys. A: Math. Theor.} 57 (2024) 095003.


\bibitem{New} {\sc M.E.J. Newman}, \textit{Networks: an Introduction, Oxford
  University Press, 2010.}

\bibitem{PN2} {\sc J. Park and M.E.J. Newman},
  \textit{Solution for the properties of a clustered network}
Phys. Rev. E 72, (2005)026136

\bibitem{PR} {\sc O. Pikhurko and A. Razborov}, \textit{Asymptotic structure
  of graphs with the minimum number of triangles},
Combin. Probab. Comput. 26 (2017) 138 - 160; \texttt{arXiv:1204.2846} (2012)

\bibitem{Pi} {\sc A. Pippard}, \textit{The Elements of Classical Thermodynamics},
Cambridge University Press, Cambridge, 1979.


\bibitem{RS1} {\sc C. Radin and L. Sadun}, \textit{Phase transitions in a complex network}, J. Phys. A: Math. Theor. 46 (2013) 305002
%; \texttt{arXiv:1301.1256} (2013)

\bibitem{RS2} {\sc C. Radin and L. Sadun}, \tet{Optimal graphons in the edge-2star model},  
\texttt{arxiv:2305.00333} (2023).


\bibitem{RS3}
{\sc C. Radin and L. Sadun}, 
\tet{Singularities in the entropy of asymptotically large simple graphs,} 
{J. Stat. Phys. 158 (2015) 853-865.}

\bibitem{Raz}
{\sc A. Razborov}, \textit{On the Minimal Density of
Triangles in Graphs},  Combin. Prob. Comp. 17 (2008) 603–618. 

\bibitem{Ru2} D.\ Ruelle, \tet{A variational formulation of equilibrium statistical mechanics and the Gibbs phase rule,} \textit{Commun.\  Math\  Phys.} {\bf 5}  (1967) 324-329.


\bibitem{Ru} {\sc D. Ruelle}, \textit{Statistical Mechanics; Rigorous Results}, Benjamin,
New York, 1969.


\bibitem{Sal} {\sc R. Salinas}, \tet{Introduction to Statistical Physics}, , Graduate Texts in Contemporary Physics, Springer, New York, 2001.


\bibitem{Si1} {\sc B. Simon}, \tet{The Statistical Mechanics of Lattice Gases, Volume 1}
  Princeton University Press, Princeton, 1993.


\bibitem{St} {\sc D. Strauss}, \tet{On a general class of models for
 interaction}, SIAM Rev. 28 (1986) 513--527


\bibitem{San} {\sc F. Santambrogio}, \textit{Optimal Transport for Applied Mathematicians}, Springer, Chem, 2005.


\bibitem{Tou} {\sc H. Touchette},
\textit{Equivalence and nonequivalence of ensembles: Thermodynamic, macrostate, and
measure levels}, J. Statist. Phys. {\bf 159}, 987–1016 (2015)



\bibitem{Yeo} {\sc J. M. Yeomans}, \textit{Statistical Mechanics of Phase Transitions}, Clarendon Press, Oxford, 1992.



\end{thebibliography}
\end{document}